\newcommand{\nc}{\newcommand}
\newcommand{\delete}[1]{}
	\nc{\mlabel}[1]{\label{#1}}  
	\nc{\mcite}[1]{\cite{#1}}  
	\nc{\mref}[1]{\ref{#1}}  
	\nc{\mbibitem}[1]{\bibitem{#1}} 
	\nc{\mlabel}[1]{\label{#1}  
		{\hfill \hspace{1cm}{\small\tt{{\ }\hfill(#1)}}}}
	\nc{\mcite}[1]{\cite{#1}{\small{\tt{{\ }(#1)}}}}  
	\nc{\mref}[1]{\ref{#1}{{\tt{{\ }(#1)}}}}  
	\nc{\mbibitem}[1]{\bibitem[\bf #1]{#1}} 
\newtheorem{theorem}{Theorem}[section]
\newtheorem{prop}[theorem]{Proposition}
\newtheorem{lemma}[theorem]{Lemma}
\newtheorem{coro}[theorem]{Corollary}
\theoremstyle{definition}
\newtheorem{defn}[theorem]{Definition}
\newtheorem{remark}[theorem]{Remark}
\newtheorem{exam}[theorem]{Example}
\newtheorem{prop-def}{Proposition-Definition}[section]
\newcommand\cal[1]{\mathcal{#1}}
\newcommand\alphlist{a,b,c,d,e,f,g,h,i,j,k,l,m,n,o,p,q,r,s,t,u,v,w,x,y,z}
\newcommand\Alphlist{A,B,C,D,E,F,G,H,I,J,K,L,M,N,O,P,Q,R,S,T,U,V,W,X,Y,Z}
\newcommand\getcmds[3]{\expandafter\newcommand\csname #2#1\endcsname{#3{#1}}}
\alphlist\do{\expandafter\getcmds\expandafter{\x}{frak}{\mathfrak}}
\Alphlist\do{\expandafter\getcmds\expandafter{\x}{frak}{\mathfrak}}
\nc{\bfk}{{\bf k}}
\font\cyr=wncyr10
\newfont{\scyr}{wncyr10 scaled 550}
\nc{\sha}{\mbox{\cyr X}}
\nc{\ssha}{\mbox{\bf \scyr X}}
\nc{\id}{\mathrm{id}}
\nc{\Id}{\mathrm{Id}}
\nc{\lbar}[1]{\overline{#1}}
\nc{\ot}{\otimes}
\nc{\dep}{\mathrm{dep}}
\nc{\tred}[1]{\textcolor{red}{#1}} \nc{\tgreen}[1]{\textcolor{green}{#1}}
\nc{\tblue}[1]{\textcolor{blue}{#1}} \nc{\tpurple}[1]{\textcolor{purple}{#1}}
\nc{\li}[1]{\tpurple{\underline{Li:}#1 }}
\nc{\liadd}[1]{\tpurple{#1}}
\nc{\xing}[1]{\tblue{\underline{Xing:}#1 }}
\nc{\dominique}[1]{\tblue{\underline{Dominique: }#1 }}
\nc{\yuan}[1]{\tred{\underline{Yuan:}#1 }}
\nc{\markus}[1]{\tred{\underline{Markus:} #1}}
\nc\hu[1]{\tgreen{\underline{Huhu:}#1}}
\newlength\xch
\newsavebox\dbox
\sbox\dbox{\tikz{\fill (0,0) circle (0.05cm);}}
\newif\ifqdd
\newif\ifzdd
\renewcommand{\atopwithdelims}[2]{%
	\genfrac{(}{)}{0pt}{}{#1}{#2}}
\nc{\dnx}{\Delta_n A} \nc{\dx}{\Delta A} \nc{\dgp}{{\rm deg_{P}}}
\nc{\dgt}{{\rm deg_{T}}} \nc{\dg}{{\rm deg}} \nc{\ida}{ID($A$)} \nc{\tu}{\tilde{u}} \nc{\tv}{\tilde{v}}
\nc{\nr}{\calr_n} \nc{\nz}{\calz_n} \nc{\fun}{\cala_{n,d}}
\nc{\fbase}{\calb} \nc{\LF}{\mathrm{RF}} \nc{\FFA}{\mathrm{LF}} \nc{\irr}{\mathrm{Irr}}
\nc{\result}{\bfk\mathrm{Irr}(S_n)}  \nc{\I}{I_{\mathrm{ID},n}^0}
\nc{\nrs}{\calr_n^\star} \nc{\ii}{\mathrm{I}} \nc{\iii}{\mathrm{II}}
\nc{\intl}{{\rm int}}\nc{\ws}[1]{{#1}}\nc{\deleted}[1]{\delete{#1}}\nc{\plas}{placements\xspace}
\nc{\bim}[1]{#1}  \nc{\shaop}{\sha_{\Omega}^{+}}  \nc{\shao}{\sha_{\Omega}}
\nc{\bbim}[2]{#1 #2} \nc{\bbbim}[2]{#1,\, #2} \nc{\RBF}{{\rm RBF}}
\nc{\frb}{F_{\RB}} \nc{\shaf}{\ssha_{\tiny{\Omega}}} \nc{\sham}{\diamond_{\tiny{\Omega}}}
\nc{\lf}{\lfloor} \nc{\rf}{\rfloor} \nc{\shan}{\ssha_{\lambda}}
\nc{\rlex}{{\rm {lex}}} \nc{\bb}{\Box} \nc{\ra}{\rightarrow}
\nc{\e}{{\rm {e}}}
\nc{\DDF}{\mathrm{DD}(X,\,\Omega)}\nc{\DTF}{\mathrm{DT}(X,\,\Omega)} \nc{\DT}{\mathrm{DT}'(\Omega,\,V)}
\nc{\bra}{\mathrm{bra}} \nc{\bre}{\mathrm{bre}}
\nc{\dec}{\mathrm{dec}} \nc{\diamondw}{\diamond_{w}}
\nc{\type}{\mathrm{type}}
\nc\caF[1]{\cal{F}_{#1}(X,\,\Omega)}
\nc\calt{\cal{T}(X,\,\Omega)} \nc\caltn{\cal{T}_n(X,\,\Omega)}
\nc\caltbin{\cal{T}_b(X,\,\Omega)}
\nc\calta{\cal{T}_0(X,\,\Omega)}
\nc\caltb{\cal{T}_1(X,\,\Omega)}
\nc\caltc{\cal{T}_2(X,\,\Omega)}
\nc\caltd{\cal{T}_3(X,\,\Omega)}
\nc\caltm{\cal{T}_m(X,\,\Omega)}
\nc\calf{\cal{F}(X,\,\Omega)}
\nc\fram{\frak{M}(\Omega,\, X)}
\nc\shaw{\sha^{NC}_w(\Omega,\, X)}
\nc\dw{\diamond_w} \nc\dl{\diamond_\ell}
\nc\shal{\sha^{NC}_\ell(X,\, \Omega)} \nc\shav{\sha^{NC}_w(\Omega,\, V)} \nc\shat{\sha^{NC,1}_w(\Omega,\, T^{+}(V))}
\nc{\cfo}{\cal{F}(X,\,\Omega)}
\nc{\lar}{\varinjlim}
\nc\XO{(X,\,\Omega)}
\def\cxo#1#2;{\cal{#1}#2\XO}
\def\cxob#1#2;{\cal{#1}#2_b\XO}
\nc\lrf[2]{B_{#2}^+(#1)}
\nc{\fd}{\mathrm{\text{typed angularly decorated planar rooted trees}}}
\nc{\rb}{\mathrm{RBFWs}} \nc{\dfw}{\mathrm{DFW{(X)}}} \nc{\tfw}{\mathrm{TFW{(X)}}}
\nc{\tfv}{\mathrm{TFW{(V)}}} \nc{\rbf}{\mathrm{RBF}}
\nc{\db}{\mathrm{db}}
\nc{\st}{\mathrm{st}}
\def\Ve#1,#2,#3;{\vee_{#1,\,(#2,\,#3)}}
\def\bigv#1;#2;#3;{\bigvee\nolimits_{#1}^{#2;\,#3}}
\nc{\Irr}{\mathrm{Irr}} \nc{\lc}{\lfloor} \nc{\rc}{\rfloor}
\nc{\rswx}{\frak{M}( \Omega_R\sqcup \Omega_S, X)}
\nc{\rswxs}{\frak{M}^\star( \Omega_R\sqcup \Omega_S, X)}
\nc{\Dl}{\leq_{_{{\rm Dl}}}} \nc{\Dll}{<_{_{{\rm Dl}}}} \nc{\bbs}{\mathbb{S}}
\nc{\orbsa}{$\Omega$-Rota-Baxter system\xspace}
\nc{\orbsas}{$\Omega$-Rota-Baxter systems\xspace}
\nc{\mrbs}{matching Rota-Baxter system\xspace}
\nc{\mrbss}{matching Rota-Baxter systems\xspace}
\nc\prbsla[4]{{R}_{#1}\left(#3\right)R_{#2}\left(#4\right)}
\nc\prbsra[4]{{R}_{#1\rightarrow#2}\left(R_{#1\rhd#2}\left(#3\right)#4\right)+{R}_{#1\leftarrow#2}\left(#3S_{#1\lhd#2}\left(#4\right)\right)}
\nc\prbslb[4]{{S}_{#1}\left(#3\right)S_{#2}\left(#4\right)}
\nc\prbsrb[4]{{S}_{#1\rightarrow#2}\left(R_{#1\rhd#2}\left(#3\right)#4\right)+{S}_{#1\leftarrow#2}\left(#3S_{#1\lhd#2}\left(#4\right)\right)}
\nc\rbsla[4]{\lc #3 \rc ^{R}_{#1} \lc #4 \rc ^R_{#2}}
\nc\rbslb[4]{\lc #3\rc ^{S}_{#1}  \lc #4 \rc ^S_{#2}}
\nc\rbsray[4]{\lc \lc #3 \rc^R_{#1\rhd#2} #4 \rc ^{R}_{#1\rightarrow#2}}
\nc\rbsraz[4]{\lc #3 \lc #4\rc^S_{#1\lhd#2}\rc ^{R}_{#1\leftarrow#2}}
\nc\rbsrby[4]{\lc \lc #3\rc ^R_{#1\rhd#2}#4\rc ^{S}_{#1\rightarrow#2}}
\nc\rbsrbz[4]{\lc #3 \lc #4 \rc ^S_{#1\lhd#2}\rc ^{S}_{#1\leftarrow#2}}
\nc\rbsrac[4]{\lc #3 \lc #4\rc^R_{#1\lhd#2}\rc ^{R}_{#1\leftarrow#2}}
\nc\rbslq[4]{\lc #3 \rc ^{Q}_{#1} \lc #4 \rc ^Q_{#2}}
\nc\rbslt[4]{\lc #3\rc ^{T}_{#1}  \lc #4 \rc ^T_{#2}}
\nc\rbsrqy[4]{\lc \lc #3 \rc^R_{#1\rhd#2} #4 \rc ^{Q}_{#1\rightarrow#2}}
\nc\rbsrqz[4]{\lc #3 \lc #4\rc^S_{#1\lhd#2}\rc ^{Q}_{#1\leftarrow#2}}
\nc\rbsrty[4]{\lc \lc #3\rc ^R_{#1\rhd#2}#4\rc ^{T}_{#1\rightarrow#2}}
\nc\rbsrtz[4]{\lc #3 \lc #4 \rc ^S_{#1\lhd#2}\rc ^{T}_{#1\leftarrow#2}}
\nc{\obr}[1]{\lc #1 \rc_\omega^R} \nc{\obs}[1]{\lc #1 \rc_\omega^S} \nc{\obq}[1]{\lc #1 \rc_\omega^*}
\nc{\obqa}[1]{\lc #1 \rc_\alpha^*} \nc{\obqb}[1]{\lc #1 \rc_\beta^*}
\nc{\obra}[1]{\lc #1 \rc_\alpha^R} \nc{\obrb}[1]{\lc #1 \rc_\beta^R}
\nc{\obsa}[1]{\lc #1 \rc_\alpha^S} \nc{\obsb}[1]{\lc #1 \rc_\beta^S}
\begin{document}

\title[Cohomology Theory of Rota-Baxter family BiHom-$\Omega$-associative algebras]{Cohomology Theory of Rota-Baxter family BiHom-$\Omega$-associative algebras}

\author{Jiaqi Liu
}
\address{School of Mathematics and Statistics, Henan University, Henan, Kaifeng 475004, P.\,R. China}
\email{liujiaqi@henu.edu.cn
}

\author{Chao Song}
\address{School of Mathematical Sciences, East China Normal University, Shanghai 200241, China}
\email{52265500011@stu.ecnu.edu.cn}

\author{Yuanyuan Zhang$^{*}$
}
\footnotetext{* Corresponding author.}
\address{School of Mathematics and Statistics, Henan University, Henan, Kaifeng 475004, P.\,R. China}
\email{zhangyy17@henu.edu.cn
}
%
%

\date{\today}

\begin{abstract}
	In this paper, we first introduce the concept of Rota-Baxter family BiHom-$\Omega$-associative algebras of weight $\lambda$, then we define the cochain complex of BiHom-$\Omega$-associative algebras and verify it via Maurer-Cartan method. Next, we further introduce and study the cohomology theory of Rota-Baxter family BiHom-$\Omega$-associative algebras of weight $\lambda$ and show that this cohomology controls the corresponding deformations. Finally, we study abelian extensions of Rota-Baxter family BiHom-$\Omega$-associative algebras in terms of the second cohomology group.
\end{abstract}

\makeatletter
\@namedef{subjclassname@2020}{\textup{2020} Mathematics Subject Classification}
\makeatother
\subjclass[2020]{
	16W99, 
	16S80, 
	17B38  
}

\keywords{Rota-Baxter family BiHom-$\Omega$-associative algebra, cohomology, Maurer-Cartan element, deformation, abelian extension}

\maketitle

\tableofcontents

\setcounter{section}{0}

\allowdisplaybreaks

\section{Introduction}
The concept of Rota-Baxter algebras was proposed in 1960 by G. Baxter~\cite{baxter} in the probability study about the Spitzer's identity in fluctuation theory. Since then, this concept has appeared in a wide range of areas in mathematics and mathematical physics, such as number theory~\cite{number}, Hopf algebras~\cite{hopf-1,hopf-2} and quantum field theory~\cite{quantum}. The concept of algebras with multiple linear operators was ﬁrst introduced by Kurosch in~\cite{multi-operator}. After that, Guo~\cite{rbf} proposed the concept of Rota-Baxter family algebras, which is a generalization of Rota-Baxter algebras. Then, more and more scholars began to study the family algebra framework, which promoted the development of Rota-Baxter family algebra to a certain extent. In~\cite{BHOalg}, we have given the concept of BiHom-$\Omega$-associative algebras, which is the BiHom-$\Omega$ version of associative algebras. In this paper, we present the concept of Rota-Baxter family BiHom-$\Omega$-associative algebras, which makes the Rota-Baxter family compatible with the BiHom-$\Omega$-associative algebraic structure.

\smallskip

For the classical associative algebras, the cohomology theory has been studied in~\cite{hoch}. Gerstenhaber in~\cite{assoalgdeforma} showed that Hochschild cohomology of associative algebras controls the corresponding formal deformations, and he found that the Hochschild cohomology has a rich structure, which is called the Gerstenhaber algebra~\cite{Gersten}. The Rota-Baxter algebra is an associative algebra equipped with a linear operator satisfying one specific relation, it is natural to consider the cohomology theory of Rota-Baxter algebras when studying the structure of Rota-Baxter algebras, which has been solved by Wang and Zhou in~\cite{RBA}. In recent years, the cohomology theory and deformation theory of a series of algebraic structures related to Rota-Baxter operators have been studied one by one. For example, Das has studied the cohomology of relative Rota-Baxter algebra~\cite{cohomo-relaRBO}, twisted Rota-Baxter operator~\cite{coho-twisted-RBO}, Rota-Baxter family~\cite{cohomo-RBf} and matching relative Rota-Baxter algebra~\cite{coho-matching}. In addition, Zhang ~\cite{RBFassoconformal} studied the cohomology theory of Rota-Baxter family $\Omega$-associative conformal algebras. The deformations and cohomology theory of $\Omega$-Rota-Baxter algebras have been studied by Song in~\cite{L} via constructing the twisted $L_{\infty}[1]$ algebras. Of course, the cohomology theory of BiHom-class algebraic structures has also been studied by many scholars, such as BiHom-associative algebras~\cite{hochsch-BiHom-asso}, BiHom-left-symmetric algebras~\cite{coho-pre-Lie}, and so on.

\smallskip

In order to better study the cohomology of Rota-Baxter family BiHom-$\Omega$-associative algebras, we first describe the cohomology of BiHom-$\Omega$-associative algebras. Similar to~\cite{hochsch-BiHom-asso}, given a vector space $A$, we first construct a non-symmetric operad structure~\cite{operad1,operad}, then we give a graded Lie algebra structure~(Proposition~\ref{graded-Liealg}) from this structure, whose Maurer-Cartan elements are in one-to-one correspondence with the BiHom-$\Omega$-associative algebraic structures on $A$ (Proposition~\ref{prop-MC}). By constructing a new BiHom-$\Omega$-associative algebraic structure with a Rota-Baxter family, we get the cochain complex of Rota-Baxter family on BiHom-$\Omega$-associative algebras, and further, we obtain the cochain complex of Rota-Baxter family BiHom-$\Omega$-associative algebras.

\smallskip
The paper is organized as follows. In Section~\ref{sec2}, we mainly propose the concept of Rota-Baxter family BiHom-$\Omega$-associative algebras and introduce some of its related properties. In Section~\ref{sec3}, we first define the cohomology theory of BiHom-$\Omega$-associative algebras in two ways. One is to define coboundary operator directly, and the other is to characterize cohomology by constructing a graded Lie algebra whose Maurer-Cartan elements correspond to the BiHom-$\Omega$-associative algebraic structures. Then we characterize the cohomology theory of Rota-Baxter family BiHom-$\Omega$-associative algebras by studying the cohomology of BiHom-$\Omega$-associative algebras. In Section~\ref{sec4}, we study the deformations of BiHom-$\Omega$-associative algebras and Rota-Baxter family BiHom-$\Omega$-associative algebras, respectively. We interpret them via the lower degree cohomology groups. In Section~\ref{sec5}, we study the abelian extensions of Rota-Baxter family BiHom-$\Omega$-associative algebras and show that they are classified by the second cohomology.

\smallskip
\textbf{Notation.} Throughout this paper, we fix a commutative unitary ring \bfk, which will be the base
ring of all algebras as well as linear maps. By an algebra we mean a
unitary associative noncommutative algebra, unless the contrary is specified. Denote by $\Omega $ a semigroup, unless otherwise specified. For the composition of two maps $ p $ and $ q $, we will write either $ p\circ q $ or simply $ pq $ without causing confusion.

\bigskip

\section{Rota-Baxter family BiHom-$\Omega$-associative algebras}\label{sec2}
\setcounter{equation}{0}
In this section, we first recall the concept of BiHom-$\Omega$-associative algebras and study some related properties. Then we introduce the definition of Rota-Baxter family BiHom-$\Omega$-associative algebras. In the end, we obtain an important result (Proposition~\ref{defrhd}), which prepares for the study of cohomology theory in Section~\ref{sec-coho-RBf}.

\subsection{BiHom-$\Omega$-associative algebras}
In this subsection, we first give the definition of bimodules over the BiHom-$\Omega$-associative algebras. Then we introduce the concept of the semi-direct product BiHom-$\Omega$-associative algebras and give a corresponding example. Finally, we introduce the definition and property of bimodule algebras under the BiHom-$\Omega$-associative version. Now, let's recall the definition of BiHom-$\Omega$-associative algebras, as a generalization of BiHom-associative algebras~\cite{gmmp}.

\begin{defn}\label{def-BHO}\cite{BHOalg}
	A \textbf{BiHom-$\Omega$-associative algebra} is a
	4-tuple $( A, \cdot_{\alpha,\,\beta},p^A_{\omega},q^A_{\omega} )_{\alpha,\,\beta,\,\omega \in \Omega} $ consisting of a vector space $A$, two commuting families of linear maps $( p^A_{\omega})_{\omega\in\Omega},( q^A_{\omega})_{\omega\in \Omega} :A\rightarrow A$ and  a family of bilinear maps $(\cdot_{\alpha,\,\beta})_{\alpha,\,\beta \in \Omega}:A\otimes A\rightarrow A$ satisfying
	\begin{gather}
		p^A_{\alpha\,\beta} (x \cdot_{\alpha,\,\beta }y) =p^A_{\alpha} (x)\cdot_{\alpha,\,\beta}p^A_{\beta} (y) \text{ and }q^A_{\alpha\,\beta} (x\cdot_{\alpha,\,\beta}y)=q^A_{\alpha} (x)\cdot_{\alpha,\,\beta}q^A_{\beta} (y)
		,\quad \text{(multiplicativity)}  \label{eqalfabeta} \\
		p^A_{\alpha}(x)\cdot_{\alpha,\,\beta\gamma}(y\cdot_{\beta,\,\gamma}z)=(x\cdot_{\alpha,\,\beta}y)\cdot_{\alpha\beta,\,\gamma}q^A_{\gamma}(z),\quad \text{(BiHom-$\Omega$-associativity)}
		\label{eqasso}
	\end{gather}
	for all $x,y, z\in A,\,\alpha,\,\beta,\,\gamma \in \Omega.$	
	The maps $(p^A_{\omega})_{\omega\in \Omega} $ and $(q^A_{\omega})_{\omega\in \Omega} $ (in this order) are called the structure maps of $A$.

	Let $(A, \cdot_{\alpha,\,\beta}, p^A_{\omega},q^A_{\omega})_{\alpha,\,\beta,\,\omega \in \Omega}$ and $(A', \cdot'_{\alpha,\,\beta}, p^{A'}_{\omega},q^{A'}_{\omega})_{\alpha,\,\beta,\,\omega \in \Omega}$ be two BiHom-$\Omega$-associative algebras. A family of linear maps $(f_{\alpha})_{\alpha \in \Omega}:A\rightarrow A'$ is called a $\mathbf{BiHom}$-$\mathbf{\Omega}$-$\mathbf{associative \, algebra}$ $\mathbf{ homomorphism}$ if 
	\[p^{A'}_{\alpha}\circ f_{\alpha}=f_{\alpha}\circ p^{A}_{\alpha} ,\quad q^{A'}_{\alpha}\circ f_{\alpha}=f_{\alpha}\circ q^{A}_{\alpha},\]
	\begin{align}
		f_{\alpha\,\beta}(x\cdot_{\alpha,\,\beta}y)=f_{\alpha}(x)\cdot_{\alpha,\,\beta}'f_{\beta}(y), \label{RBfBHOalghomo}
	\end{align}
	for all $ x,y\in A,\;\alpha,\,\beta\in \Omega.$
\end{defn}

\begin{defn} \label{bimodule}
	Let $(A, \cdot_{\alpha,\,\beta}, p^A_{\omega}, q^A_{\omega})_{\alpha,\,\beta,\,\omega \in \Omega} $ be a BiHom-$\Omega$-associative algebra, $M$
	be a vector space and $( p_{\omega}^{M})_{\omega\in\Omega}, (q_{\omega}^{M})_{\omega\in\Omega}:M \rightarrow M$ be two commuting families of linear maps.
	\begin{enumerate}
		\item A $\bf{left \, module}$ over $A$ on $M$ consists of $(M, p_{\omega}^{M}, q_{\omega}^{M})_{\omega\in\Omega}$ together with a family of bilinear maps $( \rhd_{\alpha,\,\beta})_{\alpha,\,\beta \in \Omega}:A\otimes M\rightarrow M $ such that
		\begin{eqnarray}
			&&p_{\alpha\,\beta}^{M}(x\rhd_{\alpha,\,\beta} m)=p^A_{\alpha}(x)\rhd_{\alpha,\,\beta} p_{\beta}^{M}(m),\label{lmod-1}\\
			&&q_{\alpha\,\beta}^{M}(x\rhd_{\alpha,\,\beta} m)=q^A_{\alpha}(x)\rhd_{\alpha,\,\beta} q_{\beta}^{M}(m),\label{lmod-2}\\
			&&p^A_{\alpha}(x)\rhd_{\alpha,\,\beta \,\gamma} (x^{\prime }\rhd_{\beta,\,\gamma} m)=(x\cdot_{\alpha,\,\beta}x^{\prime })\rhd_{\alpha\beta,\,\gamma} q_{\gamma}^{M}(m),
			\label{lmod}
		\end{eqnarray}
		for all $ x, x'\in A,\, m\in M,\,\alpha,\,\beta,\,\gamma\in\Omega.$
		
		\item A $\bf{right \, module}$ over $A$ on $M$ consists of $(M, p_{\omega}^{M}, q_{\omega}^{M})_{\omega\in \Omega}$ together with a family of bilinear maps $( \lhd_{\alpha,\,\beta})_{\alpha,\,\beta \in \Omega}:M\otimes A\rightarrow M $ such that
		\begin{eqnarray}
			&&p_{\alpha\,\beta}^{M}(m\lhd_{\alpha,\,\beta} x)=p_{\alpha}^{M}(m)\lhd_{\alpha,\,\beta} p^A_{\beta}(x),\label{rmod-1}\\
			&&q_{\alpha\,\beta}^{M}(m\lhd_{\alpha,\,\beta} x)=q_{\alpha}^{M}(m)\lhd_{\alpha,\,\beta} q^A_{\beta}(x),\label{rmod-2}\\
			&&p_{\alpha}^{M}(m)\lhd_{\alpha,\,\beta\,\gamma} (x\cdot_{\beta,\,\gamma}x')=(m\lhd_{\alpha,\,\beta} x)\lhd_{\alpha\,\beta,\,\gamma} q^A_{\gamma}(x'),
			\label{rmod}
		\end{eqnarray}
		for all $ x, x'\in A,\, m\in M,\,\alpha,\,\beta,\,\gamma\in\Omega.$
		
		\item Let $(M, \rhd_{\alpha,\,\beta}, p_{\omega}^{M}, q_{\omega}^{M})_{\alpha,\,\beta,\,\omega\in \Omega}$ be
		a left module over $A$ and $(M,\lhd_{\alpha,\,\beta}, p_{\omega}^{M}, q_{\omega}^{M})_{\alpha,\,\beta,\,\omega\in \Omega}$ be a right module over $A$. We call $(M,\rhd_{\alpha,\,\beta},\lhd_{\alpha,\,\beta}, p_{\omega}^{M}, q_{\omega}^{M})_{\alpha,\,\beta,\,\omega\in \Omega}$ a $\bf{bimodule}$ over $A$ if
		\begin{eqnarray}
			&&p^A_{\alpha}(x)\rhd_{\alpha,\,\beta\,\gamma} (m\lhd_{\beta,\,\gamma} x')=(x\rhd_{\alpha,\,\beta} m)\lhd_{\alpha\,\beta,\,\gamma} q^A_{\gamma}(x'), \label{bimod}
		\end{eqnarray}
		for all $x, x'\in A,\, m\in M,\,\alpha,\,\beta,\,\gamma\in\Omega.$
	\end{enumerate}
	In particular, we call $ ( A,\cdot_{\alpha,\,\beta},p^A_{\omega},q^A_{\omega} )_{\alpha,\,\beta,\,\omega \in \Omega} $ the \textbf{regular bimodule} over $A$.
\end{defn}

Let $(A, \cdot_{\alpha,\,\beta}, p_{\omega}^{A}, q_{\omega}^{A})_{\alpha,\,\beta,\,\omega\in \Omega}$ be a BiHom-$\Omega$-associative algebra and let $M$ be a vector space with two commuting families of linear maps $(p_{\omega}^{M})_{\omega\in\Omega}, (q_{\omega}^{M})_{\omega\in\Omega}
:M\rightarrow M$. There are two families of bilinear maps
\[(\rhd_{\alpha,\,\beta})_{\alpha,\,\beta\in\Omega}:A\otimes M\rightarrow M, \; x\otimes m\mapsto x\rhd_{\alpha,\,\beta} m,\]
\[(\lhd_{\alpha,\,\beta})_{\alpha,\,\beta\in\Omega}:M\otimes A\rightarrow M, \; m\otimes x\mapsto m\lhd_{\alpha,\,\beta} x.\]
We define the multiplication and structure maps on direct sum space $A\oplus M$ by
\begin{align}
	(x, m)\circ_{\alpha,\,\beta}(x', m')&:=( x \cdot_{\alpha,\,\beta} x' , x\rhd_{\alpha,\,\beta} m'+m\lhd _{\alpha,\,\beta}x'), \label{semi-product-1}\\
	p_{\alpha} (x, m)&:=\big(p_{\alpha}^{A}(x), p_{\alpha}^{M}(m)\big),\label{semi-product-2}\\
	q _{\alpha}(x, m)&:=\big(q_{\alpha}^{A}(x), q_{\alpha}^{M}(m)\big),\label{semi-product-3}
\end{align}
for all $(x,m),\, (x',m')\in A\oplus M, \,\alpha,\,\beta \in \Omega.$ Then $A\ltimes M :=( A\oplus M, \circ_{\alpha,\,\beta}, p_{\omega}, q_{\omega})_{\alpha,\,\beta,\,\omega\in \Omega}$ is a
BiHom-$\Omega$-associative algebra if and only if $(M, \rhd_{\alpha,\,\beta}, \lhd_{\alpha,\,\beta}, p_{\omega}^{M}, q_{\omega}^{M})_{\alpha,\,\beta,\,\omega \in \Omega}$ is a bimodule over BiHom-$\Omega$-associative algebra $(A, \cdot_{\alpha,\,\beta}, p_{\omega}^{A}, q_{\omega}^{A})_{\alpha,\,\beta,\,\omega \in \Omega}$. Moreover, $A\ltimes M$ is called the \textbf{semi-direct product BiHom-$\Omega$-associative algebra} of $A$ with $M$.

 In~\cite[Example~2.5]{BHOalg}, we already introduced that $(A=\mathbf{k}\lbrace e_{1},e_{2}\rbrace, \cdot_{\alpha,\,\beta},p^A_{\omega},q^A_{\omega})_{\alpha,\,\beta,\,\omega\in \Omega}$ is a BiHom-$\Omega$-associative algebra and the operations on $A$ are defined by
   \[(k_{1}e_{1}+k_{2}e_{2})\cdot_{\alpha,\,\beta}(k_{3}e_{1}+k_{4}e_{2}):=k_{1}(k_{3}+k_{4})c(\alpha,\,\beta)e_{1}+k_{2}(k_{3}+k_{4})c(\alpha,\,\beta)e_{2},\]
   \[p^A_{\alpha}(k_{1}e_{1}+k_{2}e_{2}):=k_{1}(\alpha\rightthreetimes 1_{k})e_{1}+k_{2}(\alpha\rightthreetimes 1_{k})e_{2},\]
   \[q^A_{\alpha}(k_{1}e_{1}+k_{2}e_{2}):=(k_{1}+k_{2})(1_{k}\leftthreetimes\alpha)e_{1},
   \,\text{for all } k_{1}e_{1}+k_{2}e_{2},\,k_{3}e_{1}+k_{4}e_{2}\in A,\,\alpha,\,\beta\in \Omega,\]
   where the maps $ c: \Omega \times \Omega\rightarrow \mathbf{k},\;\rightthreetimes : \Omega \times \mathbf{k}\rightarrow \mathbf{k}\;\text{and}\;\leftthreetimes: \mathbf{k}\times \Omega \rightarrow \mathbf{k}$ satisfy
   \[\alpha\,\beta\rightthreetimes 1_{k}=(\alpha\rightthreetimes 1_{k})(\beta\rightthreetimes1_{k}),\quad 1_{k}\leftthreetimes\alpha\,\beta=(1_{k}\leftthreetimes\alpha)(1_{k}\leftthreetimes\beta),\]
   \[c(\alpha,\,\beta)(1_{k}\leftthreetimes \gamma)c(\alpha\,\beta,\,\gamma)=c(\alpha,\,\beta\,\gamma)(\alpha\rightthreetimes1_{k})c(\beta,\,\gamma),\]
and $1_{k}$ is the unit of $\mathbf{k}$. Based on this example, we give the example of semi-direct product BiHom-$\Omega$-associative algebras as follows.
\begin{exam}
Let $M=\mathbf{k}\lbrace e_{3}\rbrace$ be a vector space. If we define
	\[\rhd_{\alpha,\,\beta}: A\times M\rightarrow M,\quad (k_{1}e_{1}+k_{2}e_{2})\rhd_{\alpha,\,\beta}k_{3}e_{3}:=k_{3}(k_{1}+k_{2})c(\alpha,\,\beta)e_{3},\]
	\[\lhd_{\alpha,\,\beta}: M\times A\rightarrow M,\quad k_{3}e_{3}\lhd_{\alpha,\,\beta}(k_{1}e_{1}+k_{2}e_{2}):=k_{3}(k_{1}+k_{2})c(\alpha,\,\beta)e_{3},\]
	\[p^M_{\alpha}(k_{3}e_{3}):=k_{3}(\alpha\rightthreetimes 1_{k})e_{3},\quad q^M_{\alpha}(k_{3}e_{3}):=k_{3}(1_{k}\leftthreetimes\alpha)e_{3},\]
	for all $k_{1}e_{1}+k_{2}e_{2},\,k_{3}e_{1}+k_{4}e_{2}\in A,\,k_{3}e_{3}\in M,\,\alpha,\,\beta\in \Omega.$ Then $(M=\mathbf{k}\lbrace e_{3}\rbrace,\rhd_{\alpha,\,\beta},\lhd_{\alpha,\,\beta},p^M_{\omega},q^M_{\omega})_{\alpha,\,\beta,\,\omega\in \Omega}$ is a bimodule over the BiHom-$\Omega$-associative algebra $(A=\mathbf{k}\lbrace e_{1},e_{2}\rbrace,\cdot_{\alpha,\,\beta},p^A_{\omega},q^A_{\omega})_{\alpha,\,\beta,\,\omega\in\Omega}$. Moreover, $A\ltimes M$ is a semi-direct product BiHom-$\Omega$-associative algebra of $A$ with bimodule $M$, where operations $(\circ_{\alpha,\,\beta})_{\alpha,\,\beta\in\Omega}$, $(p_{\omega})_{\omega\in\Omega}$, $(q_{\omega})_{\omega\in\Omega}$ are defined by Eqs.~(\ref{semi-product-1})-(\ref{semi-product-3}).
\end{exam}

Inspired by~\cite{BiHom-NS,opv}, we introduce the concept of bimodule algebras over BiHom-$\Omega$-associative algebras. Given a family of bilinear maps $(\bullet_{\alpha,\,\beta})_{\alpha,\,\beta\in\Omega}
:M\otimes M\rightarrow M$, we have the following definition.
\begin{defn}\label{bimalg}
	The 6-tuple $(M, \bullet_{\alpha,\,\beta}, \rhd_{\alpha,\,\beta}, \lhd_{\alpha,\,\beta}, p_{\omega}^{M}, q_{\omega}^{M})_{\alpha,\,\beta,\,\omega\in\Omega}$ is called a $\mathbf{bimodule \, algebra}$ over the BiHom-$\Omega$-associative algebra $ (A, \cdot_{\alpha,\,\beta}, p_{\omega}^{A}, q_{\omega}^{A})_{\alpha,\,\beta,\,\omega \in \Omega} $ if BiHom-$\Omega$-associative algebra $(A\oplus M, \ast_{\alpha,\,\beta}, p_{\omega} , q_{\omega} )_{\alpha,\,\beta,\,\omega \in \Omega}$ satisfies
	\begin{align*}
		&\quad p_{\alpha}(x, m)=\big(p_{\alpha}^{A}(x), p_{\alpha}^{M}(m)\big),\quad q_{\alpha}(x, m)=\big(q_{\alpha}^{A}(x), q_{\alpha}^{M}(m)\big),\\
&(x,m)\ast_{\alpha,\,\beta}(x',m')=(x\cdot_{\alpha,\,\beta}x', x \rhd_{\alpha,\,\beta} m'+m\lhd_{\alpha,\,\beta} x'+m\bullet_{\alpha,\,\beta} m'),
\end{align*}
	for all $(x,m),\, (x',m')\in A\oplus M,\,\alpha,\,\beta \in \Omega$.
\end{defn}

The following statement shows that a bimodule algebra defined by Definition~\ref{bimalg} is a generalization of~\cite[Definition 2.3]{pacific} and~\cite[Proposition~2.6]{BiHom-NS}.

\begin{prop}\label{bimalgbim}
	The 6-tuple	$ ( M, \bullet_{\alpha,\,\beta}, \rhd_{\alpha,\,\beta}, \lhd_{\alpha,\,\beta}, p_{\omega}^{M}, q_{\omega}^{M} )_{\alpha,\,\beta,\,\omega\in \Omega} $ is a bimodule algebra over BiHom-$\Omega$-associative algebra $ (A, \cdot_{\alpha,\,\beta}, p_{\omega}^{A}, q_{\omega}^{A})_{\alpha,\,\beta,\,\omega \in \Omega} $ if and only if
	$ ( M, \rhd_{\alpha,\,\beta}, \lhd_{\alpha,\,\beta}, p_{\omega}^{M}, q_{\omega}^{M})_{\alpha,\,\beta,\,\omega \in \Omega} $ is a bimodule over $A$ and $(M, \bullet_{\alpha,\,\beta}, p_{\omega}^{M}, q_{\omega}^{M})_{\alpha,\,\beta,\,\omega\in \Omega}$ is a BiHom-$\Omega$-associative algebra satisfying
	\begin{eqnarray}
		&&p_{\alpha}^{A}(x)\rhd_{\alpha,\,\beta\,\gamma} (m\bullet_{\beta,\,\gamma} m')=(x\rhd_{ \alpha,\,\beta} m)\bullet_{\alpha\,\beta,\,\gamma} q_{\gamma}^{M}(m'), \label{extra1} \\
		&&p_{\alpha}^{M}(m)\bullet_{\alpha,\,\beta\,\gamma} (m'\lhd_{\beta,\,\gamma} x)=(m\bullet_{\alpha,\,\beta} m')\lhd_{\alpha\,\beta,\,\gamma} q_{\gamma}^{A}(x), \label{extra2} \\
		&&p_{\alpha}^{M}(m)\bullet_{\alpha,\,\beta\,\gamma} (x\rhd_{\beta,\,\gamma} m')=(m\lhd_{\alpha,\,\beta} x)\bullet_{\alpha\beta,\,\gamma} q_{\gamma}^{M}(m'), \label{extra3}
	\end{eqnarray}
	for all $x\in A,\, m, m'\in M,\,\alpha,\,\beta,\,\gamma \in \Omega$.
\end{prop}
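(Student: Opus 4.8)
The plan is to unwind Definition~\ref{bimalg}: the $6$-tuple is a bimodule algebra over $A$ exactly when $(A\oplus M,\ast_{\alpha,\beta},p_\omega,q_\omega)$ satisfies the two axioms of a BiHom-$\Omega$-associative algebra, namely multiplicativity \eqref{eqalfabeta} and BiHom-$\Omega$-associativity \eqref{eqasso}. I would substitute the explicit formulas for $\ast_{\alpha,\beta}$, $p_\alpha$, $q_\alpha$ into these axioms and compare $A$-components with $M$-components. The engine of the argument is one \emph{separation principle}: because $\ast_{\alpha,\beta}$ is bilinear and $p_\alpha,q_\alpha$ are linear, an identity required for all arguments in $A\oplus M$ is equivalent to the family of identities obtained by placing either a pure $A$-element or a pure $M$-element in each input slot. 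Projecting onto the two summands and running over all such choices splits each axiom into mutually independent equations, which I then match against the listed conditions.

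For multiplicativity, expanding $p_{\alpha\beta}\big((x,m)\ast_{\alpha,\beta}(x',m')\big)=p_\alpha(x,m)\ast_{\alpha,\beta}p_\beta(x',m')$ gives on the $A$-component the multiplicativity of $A$, which holds by hypothesis, and on the $M$-component a three-term identity. Setting $m=0$ isolates \eqref{lmod-1}; setting $x=m'=0$ isolates \eqref{rmod-1}; setting $x=x'=0$ isolates the $p^M$-multiplicativity of $\bullet_{\alpha,\beta}$. The $q$-version produces \eqref{lmod-2}, \eqref{rmod-2} and the $q^M$-multiplicativity of $\bullet_{\alpha,\beta}$ in the same fashion. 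Hence multiplicativity of $\ast_{\alpha,\beta}$ amounts to \eqref{lmod-1}, \eqref{lmod-2}, \eqref{rmod-1}, \eqref{rmod-2} together with multiplicativity of $\bullet_{\alpha,\beta}$.

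The main step is the associativity axiom. I would expand \eqref{eqasso} for three general elements $(x,m)$, $(x',m')$, $(x'',m'')$, drop the $A$-component (which is precisely the BiHom-$\Omega$-associativity of $A$, hence automatic), and sort the $M$-component terms by the pattern recording which of the three slots carries an $A$-element and which carries an $M$-element. Each side expands into seven $M$-terms, and the eight patterns separate them cleanly: $(A,A,A)$ contributes nothing to $M$; the three single-module patterns $(A,A,M)$, $(A,M,A)$, $(M,A,A)$ reproduce the bimodule conditions \eqref{lmod}, \eqref{bimod}, \eqref{rmod}; the three double-module patterns $(A,M,M)$, $(M,M,A)$, $(M,A,M)$ reproduce the compatibilities \eqref{extra1}, \eqref{extra2}, \eqref{extra3}; and $(M,M,M)$ gives the BiHom-$\Omega$-associativity of $\bullet_{\alpha,\beta}$. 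Collecting the output of both axioms, $(A\oplus M,\ast,p,q)$ is a BiHom-$\Omega$-associative algebra if and only if $(M,\rhd_{\alpha,\beta},\lhd_{\alpha,\beta},p_{\omega}^{M},q_{\omega}^{M})$ is a bimodule over $A$, $(M,\bullet_{\alpha,\beta},p_{\omega}^{M},q_{\omega}^{M})$ is a BiHom-$\Omega$-associative algebra, and \eqref{extra1}, \eqref{extra2}, \eqref{extra3} hold, which is the assertion.

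The only real obstacle is the bookkeeping in the associativity expansion: one must confirm that within each of the eight input patterns exactly one $M$-term survives on each side and that the two survivors coincide with the claimed identity. I would handle this by tabulating, for each pattern, which of the monomials built from $\rhd_{\alpha,\beta}$, $\lhd_{\alpha,\beta}$, $\bullet_{\alpha,\beta}$, $\cdot_{\alpha,\beta}$ are nonzero, after which the matching is mechanical. No idea beyond the separation principle is required; the content lies entirely in checking that the term-by-term correspondence is a bijection onto the listed conditions.
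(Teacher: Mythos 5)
Your proposal is correct and follows essentially the same route as the paper: expand the two axioms for $(A\oplus M,\ast_{\alpha,\,\beta},p_{\omega},q_{\omega})$, note that the $A$-component is automatic, and split the $M$-component by specializing each input slot to a pure $A$- or pure $M$-element (the paper's choices $m=m'=x''=0$, $x=m'=m''=0$, etc.\ are exactly your eight patterns), recovering Eqs.~(\ref{lmod-1})--(\ref{bimod}), (\ref{extra1})--(\ref{extra3}) and the BiHom-$\Omega$-associative algebra axioms for $\bullet_{\alpha,\,\beta}$. Your explicit appeal to multilinearity to justify that the pattern-by-pattern identities are jointly equivalent to the general one is the same (implicit) separation argument the paper uses.
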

\begin{proof}
	According to Definition~\ref{bimalg}, we only need to verify that $(A\oplus M,\ast_{\alpha,\,\beta},p_{\omega},q_{\omega})_{\alpha,\,\beta,\,\omega\in\Omega}$ is a BiHom-$\Omega$-associative algebra if and only if Eqs.~(\ref{lmod-1})-(\ref{bimod}), (\ref{extra1})-(\ref{extra3}) hold and $(M, \bullet_{\alpha,\,\beta}, p_{\omega}^{M}, q_{\omega}^{M})_{\alpha,\,\beta,\,\omega\in \Omega}$ satisfies Eqs.~(\ref{eqalfabeta})-(\ref{eqasso}). For any $ (x,m),\,(x',m'),\,(x'',m'')\in A\oplus M$  and $\alpha,\,\beta,\,\gamma\in \Omega,$ the BiHom-$\Omega$-associativity for $A\oplus M$ is equivalent to
	\begin{align*}
		\Big( &p_{\alpha}^{A}( x)\cdot_{\alpha,\,\beta\gamma}( x'\cdot_{\beta,\,\gamma}x''),p_{\alpha}^{A}( x)\rhd_{\alpha,\,\beta\,\gamma}( x'\rhd_{\beta,\,\gamma}m'')+p_{\alpha}^{A}( x)\rhd_{\alpha,\,\beta\,\gamma}( m'\lhd_{\beta,\,\gamma}x'')\\
		&+p_{\alpha}^{A}( x)\rhd_{\alpha,\,\beta\,\gamma}( m'\bullet_{\beta,\,\gamma} m'')+p_{\alpha}^{M}( m)\lhd_{\alpha,\,\beta\,\gamma}( x'\cdot_{\beta,\,\gamma}x'')+p_{\alpha}^{M}( m)\bullet_{\alpha,\,\beta\gamma}( x'\rhd_{\beta,\,\gamma}m'')\\
		&+p_{\alpha}^{M}( m)\bullet_{\alpha,\,\beta\gamma}( m'\lhd_{\beta,\,\gamma}x'') +p_{\alpha}^{M}( m)\bullet_{\alpha,\,\beta\gamma}( m'\bullet_{\beta,\,\gamma}m''\Big)\\
		=&\Big( ( x\cdot_{\alpha,\,\beta}x')\cdot_{\alpha\beta,\,\gamma}q_{\gamma}^{A}( x''),( x\cdot_{\alpha,\,\beta}x')\rhd_{\alpha\beta,\,\gamma}q_{\gamma}^{M}( m'')+( x\rhd_{\alpha,\,\beta}m')\lhd_{\alpha\,\beta,\,\gamma}q_{\gamma}^{A}( x'')\\
		&+( m\lhd_{\alpha,\,\beta}x')\lhd_{\alpha\,\beta,\,\gamma}q_{\gamma}^{A}( x'') + ( m\bullet_{\alpha,\,\beta}m')\lhd_{\alpha\,\beta,\,\gamma}q_{\gamma}^{A}( x'')+( x\rhd_{\alpha,\,\beta}m')\bullet_{\alpha\beta,\,\gamma}q_{\gamma}^{M}( m'')\\
		&+ (m\lhd_{\alpha,\,\beta}x')\bullet_{\alpha\,\beta,\,\gamma}q_{\gamma}^{M}(m'')+( m\bullet_{\alpha,\,\beta}m')\bullet_{\alpha\beta,\,\gamma}q_{\gamma}^{M}( m'')\Big).
	\end{align*} 	
We obtain that Eqs.~(\ref{lmod}), (\ref{rmod})-(\ref{bimod}), (\ref{extra1})-(\ref{extra3}) hold and $(M, \bullet_{\alpha,\,\beta}, p_{\omega}^{M}, q_{\omega}^{M})_{\alpha,\,\beta,\,\omega\in \Omega}$ satisfies Eq.~(\ref{eqasso}) by taking $ m=m'=x''=0 $, $ x=m'=m''=0 $, $m=x'=m''=0$, $m=x'=x''=0$, $x=x'=m''=0$, $x=m'=x''=0$ and $x=x'=x''=0$, respectively. Similarly, we get that the multiplicativity of $A\oplus M$ is equivalent to Eqs.~(\ref{lmod-1})-(\ref{lmod-2}), (\ref{rmod-1})-(\ref{rmod-2}) hold and $(M, \bullet_{\alpha,\,\beta}, p_{\omega}^{M}, q_{\omega}^{M})_{\alpha,\,\beta,\,\omega\in \Omega}$ satisfies Eq.~(\ref{eqalfabeta}). This completes the proof.
\end{proof}

\subsection{Rota-Baxter family BiHom-$\Omega$-associative algebra of weight $\lambda$}
 In this subsection, we first give the concept of Rota-Baxter family BiHom-$\Omega$-associative algebras of weight $\lambda$. Then, we introduce the definition of Rota-Baxter family BiHom-$\Omega$-bimodules. Finally, we construct a new bimodule structure from a Rota-Baxter family BiHom-$\Omega$-bimodule.

\begin{defn}\label{def-RBfBHOalg}
Let $\lambda$ be a given element in {\bf k}. A 5-tuple $(A, \cdot_{\alpha,\,\beta},R_{\omega},p^A_{\omega},q^A_{\omega}) _{\alpha,\,\beta,\,\omega\in \Omega} $ is called a \textbf{Rota-Baxter family BiHom-$\Omega$-associative algebra of weight $\lambda$} if $(A,\cdot_{\alpha,\,\beta},p^A_{\omega},q^A_{\omega})_{\alpha,\,\beta,\,\omega\in \Omega} $ forms a BiHom-$\Omega$-associative algebra and the family of linear maps $ ( R_{\omega})_{\omega\in \Omega}:A\rightarrow A $ satisfy
	\begin{align}\label{pR=Rp}
		p^A_{\alpha}\circ R_{\alpha}=R_{\alpha}\circ p^A_{\alpha}, \;\;\;\; q^A_{\alpha}\circ R_{\alpha}=R_{\alpha}\circ q^A_{\alpha},
	\end{align}
	\begin{align}\label{RBfbihom}
		R_{\alpha}(x)\cdot_{\alpha,\,\beta}R_{\beta}(y)=R_{\alpha\beta}(R_{\alpha}(x)\cdot_{\alpha,\,\beta}y)+R_{\alpha\beta}(x\cdot_{\alpha,\,\beta}R_{\beta}(y))+\lambda R_{\alpha\beta}(x\cdot_{\alpha,\,\beta}y),
	\end{align}
	for all $x,y\in A,\,\alpha,\,\beta\in \Omega$. Then the family of maps $(R_{\omega})_{\omega\in \Omega} $ is called a Rota-Baxter family of weight $\lambda$ on BiHom-$\Omega$-associative algebra $(A,\cdot_{\alpha,\,\beta},p^A_{\omega},q^A_{\omega})_{\alpha,\,\beta,\,\omega\in \Omega}$ .
\end{defn}

\begin{defn}
	Let $ (A, \cdot_{\alpha,\,\beta},R_{\omega},p^A_{\omega},q^A_{\omega})_{\alpha,\,\beta,\,\omega\in \Omega} $ and $ (A', \cdot'_{\alpha,\,\beta},R'_{\omega},p^{A'}_{\omega},q^{A'}_{\omega})_{\alpha,\,\beta,\,\omega\in \Omega} $ be two Rota-Baxter family BiHom-$\Omega$-associative algebras of weight $\lambda$. A family of linear maps $( f_{\alpha}) _{\alpha\in \Omega} $ is called a $\mathbf{Rota}$-$\mathbf{Baxter \, family \, BiHom}$-$\mathbf{\Omega}$-$\mathbf{associative \, algebra \, homomorphism \, of \, weight}$ $\mathbf{\lambda}$ if $(f_{\alpha})_{\alpha\in\Omega}:A\rightarrow A'$ is a homomorphism of  BiHom-$\Omega$-associative algebras of weight $\lambda$ and satisfies 
	\[ f_{\alpha}\circ R_{\alpha}=R_{\alpha}'\circ f_{\alpha},\quad \text{for all }\alpha\in\Omega. \]
\end{defn}

\begin{remark}
	\begin{enumerate}
		\item If the semigroup $\Omega$ is taken to be the trivial monoid with one single
element, then a Rota-Baxter family on the BiHom-$\Omega$-associative algebra reduces to a Rota-Baxter operator on a BiHom-associative algebra induced by Liu, Makhlouf, Menini and Panaitc in \cite[Definition~1.1]{lmmp1}.
		\item In Definition~\ref{def-RBfBHOalg}, if $ p^A_{\alpha}=q^A_{\alpha} $, for all $\alpha\in \Omega$, then we can obtain the notion of a Rota-Baxter family Hom-$\Omega$-associative algebra of weight $\lambda$.
		Moreover, if $ p^A_{\alpha}=q^A_{\alpha}=\mathrm{id}_{\mathrm{A}} $, for all $\alpha\in \Omega$, we get the Rota-Baxter family $\Omega$-associative algebra of weight $\lambda$, which has been introduced in~\cite[Definition 2.5]{L}.	
	\end{enumerate}
\end{remark}
Next, we characterize the Yau twisting procedure for Rota-Baxter family BiHom-$\Omega$-associative algebras.

\begin{prop}\label{RBfftoRBfbihomf}
	Let $A$ be a vector space and let $(p^A_{\omega})_{\omega\in\Omega}, (q^A_{\omega})_{\omega\in \Omega}:A\rightarrow A$ be two commuting families of invertible linear maps which commute with a family of linear maps $( R_{\omega})_{\omega\in \Omega}: A\rightarrow A$. If we define the operation on $A$ by
	\[ x\ast_{\alpha , \,\beta}y
	:=p^A_{\alpha}(x)\cdot_{\alpha,\,\beta}q^A_{\beta}(y),\]
	for all $ x,y\in A,\,\alpha,\,\beta\in \Omega. $	
	Then $ (A,\cdot_{\alpha,\,\beta},R_{\omega})_{\alpha,\,\beta,\,\omega\in \Omega} $ is a Rota-Baxter family $\Omega$-associative algebra if and only if $ (A,\ast_{\alpha,\, \beta},R_{\omega},p^A_{\omega},q^A_{\omega})_{\alpha,\,\beta,\,\omega\in \Omega} $ is a Rota-Baxter family BiHom-$\Omega$-associative algebra.
\end{prop}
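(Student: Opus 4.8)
The plan is to check, straight from Definition~\ref{def-RBfBHOalg}, that the twisted datum $(A,\ast_{\alpha,\,\beta},R_{\omega},p^A_{\omega},q^A_{\omega})_{\alpha,\,\beta,\,\omega\in\Omega}$ meets the four requirements of a Rota-Baxter family BiHom-$\Omega$-associative algebra — multiplicativity \eqref{eqalfabeta}, BiHom-$\Omega$-associativity \eqref{eqasso}, the commutation \eqref{pR=Rp}, and the Rota-Baxter family identity \eqref{RBfbihom} — and to arrange each verification so that it becomes \emph{equivalent} to the corresponding property of $\cdot_{\alpha,\,\beta}$, thereby producing both directions of the ``if and only if'' at once. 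The device that yields this equivalence uniformly is the invertibility of the families $(p^A_\omega)$ and $(q^A_\omega)$: since each $p^A_\alpha$ and $q^A_\beta$ is bijective, as $x,y$ range over $A$ so do $p^A_\alpha(x),q^A_\beta(y)$, so an identity valid for all arguments of the form $p^A_\alpha(x),q^A_\beta(y)$ is valid for all arguments, and conversely. I will treat the four conditions in turn. Condition \eqref{pR=Rp} is immediate, being exactly the hypothesis that $R$ commutes with $p^A$ and $q^A$.

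For multiplicativity I would expand $p^A_{\alpha\beta}(x\ast_{\alpha,\,\beta}y)=p^A_{\alpha\beta}\big(p^A_\alpha(x)\cdot_{\alpha,\,\beta}q^A_\beta(y)\big)$ against $p^A_\alpha(x)\ast_{\alpha,\,\beta}p^A_\beta(y)=(p^A_\alpha)^2(x)\cdot_{\alpha,\,\beta}q^A_\beta p^A_\beta(y)$; using that $p^A$ and $q^A$ commute together with the invertibility substitution, \eqref{eqalfabeta} for $\ast$ is seen to be equivalent to the multiplicativity of $p^A$ (resp.\ $q^A$) for $\cdot$. In the direction starting from the BiHom structure this multiplicativity is therefore a consequence of \eqref{eqalfabeta}, while in the reverse direction it is precisely the compatibility of the structure maps with $\cdot$ that must be available and that is then fed into the associativity step.

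For BiHom-$\Omega$-associativity I would expand both sides of \eqref{eqasso} for $\ast$. On the left, $p^A_\alpha(x)\ast_{\alpha,\,\beta\gamma}(y\ast_{\beta,\,\gamma}z)$ becomes $(p^A_\alpha)^2(x)\cdot_{\alpha,\,\beta\gamma} q^A_{\beta\gamma}\big(p^A_\beta(y)\cdot_{\beta,\,\gamma}q^A_\gamma(z)\big)$; pushing $q^A_{\beta\gamma}$ through the inner product by its multiplicativity and commuting $p^A,q^A$ gives $(p^A_\alpha)^2(x)\cdot_{\alpha,\,\beta\gamma}\big(p^A_\beta q^A_\beta(y)\cdot_{\beta,\,\gamma}(q^A_\gamma)^2(z)\big)$. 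On the right, $(x\ast_{\alpha,\,\beta}y)\ast_{\alpha\beta,\,\gamma}q^A_\gamma(z)$ reduces, using multiplicativity of $p^A_{\alpha\beta}$, to $\big((p^A_\alpha)^2(x)\cdot_{\alpha,\,\beta}p^A_\beta q^A_\beta(y)\big)\cdot_{\alpha\beta,\,\gamma}(q^A_\gamma)^2(z)$. These are exactly the two sides of the $\Omega$-associativity of $\cdot$ (i.e.\ \eqref{eqasso} with $p^A_\omega=q^A_\omega=\mathrm{id}$) evaluated at $(p^A_\alpha)^2(x),\,p^A_\beta q^A_\beta(y),\,(q^A_\gamma)^2(z)$; since $(p^A_\alpha)^2$, $p^A_\beta q^A_\beta$ and $(q^A_\gamma)^2$ are invertible these three arguments are arbitrary, so the two associativities are equivalent.

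Finally, for the Rota-Baxter family identity I would substitute $x\ast_{\alpha,\,\beta}y=p^A_\alpha(x)\cdot_{\alpha,\,\beta}q^A_\beta(y)$ into \eqref{RBfbihom} and commute each occurrence of $R$ past $p^A$ and $q^A$ using \eqref{pR=Rp}; writing $x'=p^A_\alpha(x)$ and $y'=q^A_\beta(y)$, every one of the four terms collapses to the corresponding term of \eqref{RBfbihom} for $\cdot$ in the variables $x',y'$, and invertibility once more makes $x',y'$ range over all of $A$. The whole statement thus splits into four independent equivalences. The main obstacle is the associativity bookkeeping of the third step: one must move $q^A_{\beta\gamma}$ and $p^A_{\alpha\beta}$ through the correct inner products, keep the semigroup indices $\alpha\beta$ and $\beta\gamma$ aligned, and invoke multiplicativity at exactly the right places so that what remains is verbatim the $\Omega$-associativity of $\cdot$; everything else is driven formally by the commutation relations and the invertibility of $p^A$ and $q^A$.
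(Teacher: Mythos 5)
Your verification of the Rota--Baxter identity is exactly the paper's argument: using Eq.~(\ref{pR=Rp}) write $R_{\alpha}(x)\ast_{\alpha,\,\beta}R_{\beta}(y)=R_{\alpha}p^A_{\alpha}(x)\cdot_{\alpha,\,\beta}R_{\beta}q^A_{\beta}(y)$, apply Eq.~(\ref{RBfbihom}) for $\cdot$ at the arguments $p^A_{\alpha}(x),q^A_{\beta}(y)$, and commute $R$ back past $p^A,q^A$; invertibility then gives the converse (a point the paper's displayed computation does not actually spell out, since it only runs one direction). Where you differ is in scope: the paper explicitly treats only this identity, saying that by \cite[Definition~2.5]{L} and Definition~\ref{def-RBfBHOalg} the rest reduces to the Yau-twist statement for the underlying BiHom-$\Omega$-associative structure, whereas you carry out the multiplicativity and associativity reductions yourself. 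Those computations are correct, and absorbing the twist into the arguments and invoking bijectivity of $(p^A_{\alpha})^2$, $p^A_{\beta}q^A_{\beta}$ and $(q^A_{\gamma})^2$ is the right way to turn each verification into a genuine equivalence.

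The one soft spot is the ``only if'' direction, from the $\Omega$-associative structure to the BiHom one. Your associativity step pushes $q^A_{\beta\gamma}$ and $p^A_{\alpha\beta}$ through $\cdot$ ``by multiplicativity'', and you yourself remark that in this direction the compatibility of the structure maps with $\cdot$ ``must be available''. It is not supplied by the hypotheses: a Rota--Baxter family $\Omega$-associative algebra in the sense of \cite[Definition~2.5]{L} carries no structure maps, and the proposition only assumes $(p^A_{\omega}),(q^A_{\omega})$ to be invertible commuting linear maps commuting with $(R_{\omega})$. So, as written, that half of your argument tacitly assumes that each $p^A_{\omega},q^A_{\omega}$ is an algebra morphism of $(A,\cdot_{\alpha,\,\beta})$ --- the standard extra hypothesis in the Yau twisting procedure, and one the paper itself leaves implicit by deferring to the result underlying Proposition~\ref{defstar}. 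You should either state that hypothesis explicitly or note that without it Eq.~(\ref{eqalfabeta}) for $\ast$ cannot be derived. The reverse direction is unaffected, since there Eq.~(\ref{eqalfabeta}) for $\ast$ hands you the multiplicativity of $\cdot$ before you need it in the associativity step.
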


\begin{proof}
According to~\cite[Definition 2.5]{L} and Definition~\ref{def-RBfBHOalg}, we only need to prove that Eq.~(\ref{RBfbihom}) holds for the operation $(\ast_{\alpha,\, \beta})_{\alpha,\,\beta\in\Omega}$. For any $ x,y \in A,\, \alpha,\,\beta\in \Omega $, we have 
	\begin{eqnarray*}
		R_{\alpha}(x)\ast_{\alpha,\, \beta}R_{\beta}(y)&=&p^A_{\alpha}R_{\alpha}(x)\cdot_{\alpha,\,\beta }q^A_{\beta}R_{\beta}(y)=R_{\alpha}p^A_{\alpha}(x)\cdot_{\alpha,\,\beta}R_{\beta}q^A_{\beta}(y)\\
		&=&R_{\alpha\beta}(R_{\alpha}p^A_{\alpha}(x)\cdot_{\alpha,\,\beta}q^A_{\beta}(y))+R_{\alpha\beta}(p^A_{\alpha}(x)\cdot_{\alpha,\,\beta}R_{\beta}q^A_{\beta}(y))+\lambda R_{\alpha\beta}(p^A_{\alpha}(x)\cdot_{\alpha,\,\beta}q^A_{\beta}(y))\\
		&&\hspace{1cm}(\text{by Eq.~(\ref{RBfbihom})})\\
		&=&R_{\alpha\beta}(p^A_{\alpha}R_{\alpha}(x)\cdot_{\alpha,\,\beta}q^A_{\beta}(y))+R_{\alpha\beta}(p^A_{\alpha}(x)\cdot_{\alpha,\,\beta }q^A_{\beta}R_{\beta}(y))+\lambda R_{\alpha\beta}(p^A_{\alpha}(x)\cdot_{\alpha,\,\beta}q^A_{\beta}(y))\\
		&=&R_{\alpha\,\beta}(R_{\alpha}(x)\ast_{\alpha ,\, \beta}y)+R_{\alpha\,\beta}(x\ast_{\alpha ,\, \beta}R_{\beta}(y))+\lambda R_{\alpha\,\beta}(x\ast_{\alpha ,\, \beta}y).
	\end{eqnarray*}
This completes the proof.
\end{proof}

\begin{defn}
	Let $ (A,\cdot_{\alpha,\,\beta}, R_{\omega},p^A_{\omega},q^A_{\omega})_{\alpha,\,\beta,\,\omega\in \Omega} $ be a Rota-Baxter family BiHom-$\Omega$-associative algebra and let $ (M,\rhd_{\alpha,\,\beta},\lhd_{\alpha,\,\beta},p_{\omega}^{M},q_{\omega}^{M})_{\alpha,\,\beta,\,\omega \in \Omega} $ be a bimodule over BiHom-$\Omega$-associative algebra $ (A,\cdot_{\alpha,\,\beta},p^A_{\omega},q^A_{\omega})_{\alpha,\,\beta,\,\omega\in \Omega} $. Then $M$ is a \textbf{Rota-Baxter family BiHom-$\Omega$-bimodule} over Rota-Baxter family BiHom-$\Omega$-associative algebra $ (A,\cdot_{\alpha,\,\beta }, R_{\omega},p^A_{\omega},q^A_{\omega})_{\alpha,\,\beta,\,\omega\in \Omega} $ if $M$ is endowed with a family of linear operators $ (T_{\omega})_{\omega\in \Omega} : M\rightarrow M $ such that
	\[p_{\alpha}^M\circ T_{\alpha}=T_{\alpha}\circ p_{\alpha}^M,\quad q_{\alpha}^M\circ T_{\alpha}=T_{\alpha}\circ q_{\alpha}^M,\]
	\begin{align}\label{RBbimodulefamily-1}
		R_{\alpha}(a)\rhd_{\alpha,\,\beta}T_{\beta}(m)=T_{\alpha\,\beta}(a\rhd_{\alpha,\,\beta}T_{\beta}(m)+R_{\alpha}(a)\rhd_{\alpha,\,\beta}m+\lambda a\rhd_{\alpha,\,\beta}m),
	\end{align}
	\begin{align}\label{RBbimodulefamily-2}
		T_{\alpha}(m)\lhd_{\alpha,\,\beta}R_{\beta}(a)=T_{\alpha\,\beta}(m\lhd_{\alpha,\,\beta}R_{\beta}(a)+T_{\alpha}(m)\lhd_{\alpha,\,\beta}a+\lambda m\lhd_{\alpha,\,\beta}a),
	\end{align}
	for all $ a\in A,\; m\in M,\;\alpha,\,\beta\in \Omega $.

	We call $ (A,\cdot_{\alpha,\,\beta}, R_{\omega},p^A_{\omega},q^A_{\omega})_{\alpha,\,\beta,\,\omega\in \Omega} $ the \textbf{regular Rota-Baxter family BiHom-$\Omega$-bimodule}.
\end{defn}

\begin{prop}\label{prop-Rbf-semi-dirct}
	Let $ (A, \cdot_{\alpha,\,\beta}, R_{\omega},p^A_{\omega},q^A_{\omega})_{\alpha,\,\beta,\,\omega\in \Omega} $ be a Rota-Baxter family BiHom-$\Omega$-associative algebra and let $ (M,\rhd_{\alpha,\,\beta},\lhd_{\alpha,\,\beta},p_{\omega}^{M},q_{\omega}^{M})_{\alpha,\,\beta,\,\omega\in \Omega}$ be a bimodule over the BiHom-$\Omega$-associative algebra $ (A,\cdot_{\alpha,\,\beta}, p^A_{\omega},q^A_{\omega})_{\alpha,\,\beta,\,\omega\in \Omega} $. If we define a family of linear maps on vector space $A\oplus M$ by
	\[T_{\alpha}^{\oplus}(a,m):=\big(R_{\alpha}(a), T_{\alpha}(m)\big),\]
	for all $(a,m)\in A\oplus M,\;\alpha\in \Omega$. Then the semi-direct product BiHom-$\Omega$-associative algebra $A \ltimes M$ equipped with operator $( T_{\alpha}^{\oplus})_{\alpha\in \Omega}$ is a Rota-Baxter family BiHom-$\Omega$-associative algebra if and only if $ (M,\rhd_{\alpha,\,\beta},\lhd_{\alpha,\,\beta},T_{\omega},p_{\omega}^{M},q_{\omega}^{M})_{\alpha,\,\beta,\,\omega\in \Omega} $ is a Rota-Baxter family BiHom-$\Omega$-bimodule over $ A $. This new Rota-Baxter family BiHom-$\Omega$-associative algebra is called the \textbf{semi-direct product} (or \textbf{trivial extension} ) of $A$ by $M$.
\end{prop}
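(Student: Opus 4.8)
The plan is to reduce the biconditional to a pair of coordinate-wise identities in $A\oplus M$ and then exploit the diagonal form of $T^\oplus_\alpha$. Since the preceding discussion (just before Definition~\ref{bimalg}) already establishes that $A\ltimes M=(A\oplus M,\circ_{\alpha,\,\beta},p_\omega,q_\omega)$ is a BiHom-$\Omega$-associative algebra whenever $M$ is a bimodule over $A$, the only thing left to decide is whether the family $(T^\oplus_\omega)_{\omega\in\Omega}$ fulfils the two defining conditions of a Rota-Baxter family of weight $\lambda$: the commutation relations Eq.~(\ref{pR=Rp}) with the structure maps and the Rota-Baxter identity Eq.~(\ref{RBfbihom}), both read as equalities of ordered pairs in $A\oplus M$.

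First I would dispose of the commutation relations. Because $T^\oplus_\alpha(a,m)=\big(R_\alpha(a),T_\alpha(m)\big)$ acts diagonally and $p_\alpha$, $q_\alpha$ act componentwise by Eqs.~(\ref{semi-product-2})--(\ref{semi-product-3}), the equality $p_\alpha\circ T^\oplus_\alpha=T^\oplus_\alpha\circ p_\alpha$ holds if and only if $p^A_\alpha\circ R_\alpha=R_\alpha\circ p^A_\alpha$ and $p^M_\alpha\circ T_\alpha=T_\alpha\circ p^M_\alpha$ hold simultaneously, and similarly for $q$. The $A$-component is part of the Rota-Baxter structure already assumed on $A$, while the $M$-component is exactly the compatibility built into the definition of a Rota-Baxter family BiHom-$\Omega$-bimodule; hence these relations for $T^\oplus$ are equivalent to the corresponding relations on $A$ and on $M$.

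Next I would expand both sides of the Rota-Baxter identity Eq.~(\ref{RBfbihom}) for $T^\oplus$ on generic pairs $(a,m),(a',m')$, using the multiplication Eq.~(\ref{semi-product-1}). The first ($A$-)coordinate of the resulting equation is precisely Eq.~(\ref{RBfbihom}) for the product $\cdot_{\alpha,\,\beta}$ on $A$, which holds by hypothesis and contributes nothing new. The second ($M$-)coordinate produces a sum of $\rhd$-terms and $\lhd$-terms; after collecting the three $\rhd$-contributions and the three $\lhd$-contributions separately, I expect them to coincide with the left-hand sides of the bimodule Rota-Baxter relations Eqs.~(\ref{RBbimodulefamily-1}) and~(\ref{RBbimodulefamily-2}), respectively. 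Thus the $M$-coordinate of the identity is equivalent to the conjunction of these two equations.

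For the remaining (``only if'') direction of the equivalence, I would recover the two bimodule identities individually by zero-substitutions: specializing to pairs with $m=0$ kills every $\lhd$-contribution and isolates Eq.~(\ref{RBbimodulefamily-1}), while setting $m'=0$ kills every $\rhd$-contribution and isolates Eq.~(\ref{RBbimodulefamily-2}). The main point requiring care is organizing this second-coordinate computation so that the $\rhd$- and $\lhd$-terms separate cleanly; the zero-substitutions make this separation automatic, and since every step above is a genuine equivalence rather than a one-way implication, both directions of the statement follow at once.
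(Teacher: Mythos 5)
Your proposal is correct and is exactly the "direct calculation" that the paper leaves unwritten: the $A$-coordinate of the Rota-Baxter identity for $T^{\oplus}$ reproduces Eq.~(\ref{RBfbihom}) on $A$, and the $M$-coordinate, after the substitutions $m=0$ and $m'=0$, is equivalent to the conjunction of Eqs.~(\ref{RBbimodulefamily-1}) and~(\ref{RBbimodulefamily-2}), with the commutation relations splitting componentwise as you say. Since every step is an equivalence, both directions follow at once, matching the paper's (omitted) argument.
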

\begin{proof}
	It is a direct calculation.
\end{proof}
\begin{remark}
	Proposition~\ref{prop-Rbf-semi-dirct} is a special case in Lemma~\ref{lemma} when one take $ \psi_{\alpha,\,\beta}$ and $ \chi_{\omega}$ to be zero for all $\alpha,\,\beta\in\Omega$ and $\omega\in\Omega$.
\end{remark}

\begin{prop}\label{defstar}
	Let $ (A,\cdot_{\alpha,\,\beta}, R_{\omega},p^A_{\omega},q^A_{\omega})_{\alpha,\,\beta,\,\omega\in \Omega} $ be a Rota-Baxter family BiHom-$\Omega$-associative algebra of weight $\lambda$. Define a binary operation on $A$ by
	\begin{align*}
		a\star_{\alpha,\,\beta}b := a\cdot_{\alpha,\,\beta}R_{\beta}(b)+R_{\alpha}(a)\cdot_{\alpha,\,\beta}b+\lambda a\cdot_{\alpha,\,\beta}b,
	\end{align*}
	for all $ a,b\in A ,\,\alpha,\,\beta\in \Omega.$ Then
	\begin{enumerate}
		\item ~\cite[Theorem~2.9]{BHOalg} the quadruple $ (A, \star_{\alpha,\,\beta} ,p^A_{\omega},q^A_{\omega})_{\alpha,\,\beta,\,\omega\in \Omega} $ is a new BiHom-$\Omega$-associative algebra and denote it by $ A_{\star} $.
	
		\item the family of linear maps $ (R_{\omega})_{\omega\in \Omega} : (A, \star_{\alpha,\,\beta},p^A_{\omega},q^A_{\omega})_{\alpha,\,\beta,\,\omega\in\Omega}\rightarrow (A,\cdot_{\alpha,\,\beta}, p^A_{\omega},q^A_{\omega})_{\alpha,\,\beta,\,\omega\in \Omega} $ is a BiHom-$\Omega$-associative algebra homomorphism.
	\end{enumerate}
\end{prop}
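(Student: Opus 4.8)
The plan is to take part (1) for granted, since it is quoted directly as \cite[Theorem~2.9]{BHOalg}, and to concentrate on part (2), namely that $(R_\omega)_{\omega\in\Omega}$ defines a BiHom-$\Omega$-associative algebra homomorphism $A_\star\to(A,\cdot_{\alpha,\,\beta},p^A_\omega,q^A_\omega)$. According to Definition~\ref{def-BHO}, this amounts to checking three things: the compatibility of each $R_\alpha$ with $p^A_\alpha$ and with $q^A_\alpha$, together with the multiplicativity condition \eqref{RBfBHOalghomo}, which in the present situation reads
\begin{align*}
R_{\alpha\beta}(a\star_{\alpha,\,\beta}b)=R_\alpha(a)\cdot_{\alpha,\,\beta}R_\beta(b),\qquad a,b\in A,\ \alpha,\,\beta\in\Omega,
\end{align*}
where the source algebra carries the twisted multiplication $\star_{\alpha,\,\beta}$ and the target carries the original $\cdot_{\alpha,\,\beta}$, while both share the same structure maps $(p^A_\omega)_{\omega\in\Omega}$ and $(q^A_\omega)_{\omega\in\Omega}$.

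The compatibility with the structure maps is immediate: because source and target share the families $(p^A_\omega)$ and $(q^A_\omega)$, the required identities $p^A_\alpha\circ R_\alpha=R_\alpha\circ p^A_\alpha$ and $q^A_\alpha\circ R_\alpha=R_\alpha\circ q^A_\alpha$ are precisely Eq.~\eqref{pR=Rp} from the definition of a Rota-Baxter family. Thus no computation is needed for these two conditions.

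For the multiplicativity I would expand the left-hand side using the definition of $\star_{\alpha,\,\beta}$ and the linearity of $R_{\alpha\beta}$, obtaining
\begin{align*}
R_{\alpha\beta}(a\star_{\alpha,\,\beta}b)=R_{\alpha\beta}\big(a\cdot_{\alpha,\,\beta}R_\beta(b)\big)+R_{\alpha\beta}\big(R_\alpha(a)\cdot_{\alpha,\,\beta}b\big)+\lambda R_{\alpha\beta}\big(a\cdot_{\alpha,\,\beta}b\big).
\end{align*}
The key observation is that this is exactly the right-hand side of the Rota-Baxter family relation \eqref{RBfbihom}, which asserts that this sum equals $R_\alpha(a)\cdot_{\alpha,\,\beta}R_\beta(b)$. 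Matching the three terms produced by $a\star_{\alpha,\,\beta}b$ against the three terms on the right of \eqref{RBfbihom} therefore finishes the verification, and combined with the structure-map compatibility this shows $(R_\omega)_{\omega\in\Omega}$ is a homomorphism.

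I do not anticipate a genuine obstacle here: the twisted product $\star_{\alpha,\,\beta}$ has been designed precisely so that $R$ intertwines it with $\cdot_{\alpha,\,\beta}$, so the homomorphism property of $R$ is nothing more than a re-reading of the defining Rota-Baxter family identity. The only point demanding care is the bookkeeping of the subscripts $\alpha$, $\beta$, $\alpha\beta$, ensuring that the outputs of $R$ on each factor are placed consistently and that the weight-$\lambda$ term is carried along correctly.
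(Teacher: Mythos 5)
Your proposal is correct and matches the paper's intent: the paper simply states ``It is a direct calculation,'' and the calculation it has in mind is exactly the one you carry out, namely that the structure-map compatibility is Eq.~(\ref{pR=Rp}) and that expanding $R_{\alpha\beta}(a\star_{\alpha,\,\beta}b)$ by linearity reproduces the right-hand side of Eq.~(\ref{RBfbihom}), hence equals $R_{\alpha}(a)\cdot_{\alpha,\,\beta}R_{\beta}(b)$. No gap; you have merely written out the omitted verification.
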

\begin{proof}
It is a direct calculation.
\end{proof}

Next, we construct a bimodule structure over the BiHom-$\Omega$-associative algebra $A_{\star}$ as follows.

\begin{prop}\label{defrhd}
	Let $ (A,\cdot_{\alpha,\,\beta}, R_{\omega},p^A_{\omega},q^A_{\omega})_{\alpha,\,\beta,\,\omega\in \Omega} $ be a Rota-Baxter family BiHom-$\Omega$-associative algebra of weight $\lambda$ and $ (M,\rhd_{\alpha,\,\beta},\lhd_{\alpha,\,\beta},T_{\omega},p_{\omega}^{M},q_{\omega}^{M})_{\alpha,\,\beta,\,\omega\in \Omega} $ be a Rota-Baxter family BiHom-$\Omega$-bimodule over $A$. We define two families of bilinear maps $ ( \blacktriangleright_{\alpha,\,\beta})_{\alpha,\,\beta\in \Omega} $ and $ (\blacktriangleleft_{\alpha,\,\beta})_{\alpha,\,\beta\in \Omega} $ as follows.
	\[\blacktriangleright_{\alpha,\,\beta} : A\otimes M\rightarrow M,\]
	\begin{align*}
		a\blacktriangleright_{\alpha,\,\beta}m := R_{\alpha}(a)\rhd_{\alpha,\,\beta}m-T_{\alpha\,\beta}(a\rhd_{\alpha,\,\beta}m),
	\end{align*}
	\[\blacktriangleleft_{\alpha,\,\beta} : M\otimes A\rightarrow M,\]
	\begin{align*}
		m\blacktriangleleft_{\alpha,\,\beta}a := m\lhd_{\alpha,\,\beta}R_{\beta}(a)-T_{\alpha\beta}(m\lhd_{\alpha,\,\beta}a),
	\end{align*}
for all $ a\in A ,\; m\in M ,\; \alpha,\,\beta\in \Omega$. Then $M_{\star}:=(M,\blacktriangleright_{\alpha,\,\beta},\blacktriangleleft_{\alpha,\,\beta}, p_{\omega}^M,q_{\omega}^M)_{\alpha,\,\beta,\,\omega\in \Omega}$ is a bimodule over $ A_{\star} $.
\end{prop}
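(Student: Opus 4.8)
The plan is to verify directly that $M_\star=(M,\blacktriangleright_{\alpha,\beta},\blacktriangleleft_{\alpha,\beta},p^M_\omega,q^M_\omega)$ satisfies the three families of axioms of Definition~\ref{bimodule}, now taken over the algebra $A_\star$ whose multiplication is $\star_{\alpha,\beta}$. Concretely, for the left action I must establish the two multiplicativity identities together with
\[ p^A_\alpha(a)\blacktriangleright_{\alpha,\beta\gamma}(b\blacktriangleright_{\beta,\gamma}m)=(a\star_{\alpha,\beta}b)\blacktriangleright_{\alpha\beta,\gamma}q^M_\gamma(m), \]
the mirror statements for $\blacktriangleleft$, and finally the compatibility
\[ p^A_\alpha(a)\blacktriangleright_{\alpha,\beta\gamma}(m\blacktriangleleft_{\beta,\gamma}b)=(a\blacktriangleright_{\alpha,\beta}m)\blacktriangleleft_{\alpha\beta,\gamma}q^A_\gamma(b). \]
The two multiplicativity identities are routine: expanding the definition of $\blacktriangleright_{\alpha,\beta}$ and using Eq.~(\ref{pR=Rp}), the commutation of $T_\omega$ with $p^M_\omega,q^M_\omega$, and Eqs.~(\ref{lmod-1})--(\ref{lmod-2}) moves $p^M$ (resp.\ $q^M$) through both summands and reassembles $p^A_\alpha(a)\blacktriangleright_{\alpha,\beta}p^M_\beta(m)$; the right-hand analogues use Eqs.~(\ref{rmod-1})--(\ref{rmod-2}). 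So the substance of the proof is the three associativity-type identities.

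For the left associativity I would expand the left-hand side using the definition of $\blacktriangleright$ twice, writing $b\blacktriangleright_{\beta,\gamma}m=R_\beta(b)\rhd_{\beta,\gamma}m-T_{\beta\gamma}(b\rhd_{\beta,\gamma}m)$ and then acting by $p^A_\alpha(a)$; four terms appear. On the term $p^A_\alpha(R_\alpha(a))\rhd_{\alpha,\beta\gamma}(R_\beta(b)\rhd_{\beta,\gamma}m)$ I apply the original associativity Eq.~(\ref{lmod}) to obtain $(R_\alpha(a)\cdot_{\alpha,\beta}R_\beta(b))\rhd_{\alpha\beta,\gamma}q^M_\gamma(m)$, and the Rota-Baxter relation Eq.~(\ref{RBfbihom}) then rewrites $R_\alpha(a)\cdot_{\alpha,\beta}R_\beta(b)=R_{\alpha\beta}(a\star_{\alpha,\beta}b)$, producing exactly the first summand $R_{\alpha\beta}(a\star_{\alpha,\beta}b)\rhd_{\alpha\beta,\gamma}q^M_\gamma(m)$ of the right-hand side. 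It then remains to match the correction term $-T_{\alpha\beta\gamma}\big((a\star_{\alpha,\beta}b)\rhd_{\alpha\beta,\gamma}q^M_\gamma(m)\big)$. The crucial step is to treat the mixed term $p^A_\alpha(R_\alpha(a))\rhd_{\alpha,\beta\gamma}T_{\beta\gamma}(b\rhd_{\beta,\gamma}m)$ via the Rota-Baxter bimodule axiom Eq.~(\ref{RBbimodulefamily-1}), applied with $p^A_\alpha(a)$ in the role of $a$, with $b\rhd_{\beta,\gamma}m$ in the role of $m$, and with indices $(\alpha,\beta\gamma)$; this produces a $T_{\alpha\beta\gamma}$-image of $p^A_\alpha(a)\rhd_{\alpha,\beta\gamma}T_{\beta\gamma}(b\rhd_{\beta,\gamma}m)$ that cancels against the identical contribution coming from the fourth term of the expansion. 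After this cancellation, a further application of Eq.~(\ref{lmod}) to the surviving $R_\alpha(a)\cdot_{\alpha,\beta}b$, $a\cdot_{\alpha,\beta}b$ and $a\cdot_{\alpha,\beta}R_\beta(b)$ pieces, followed by factoring out $-T_{\alpha\beta\gamma}\big((\,\cdot\,)\rhd_{\alpha\beta,\gamma}q^M_\gamma(m)\big)$, reconstitutes $a\star_{\alpha,\beta}b$ and yields precisely the required correction term.

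The right associativity is handled symmetrically, interchanging the roles of $\rhd$ and $\lhd$ and using Eqs.~(\ref{rmod}) and (\ref{RBbimodulefamily-2}) in place of Eqs.~(\ref{lmod}) and (\ref{RBbimodulefamily-1}). For the bimodule compatibility one expands both $\blacktriangleright$ and $\blacktriangleleft$, uses the original compatibility Eq.~(\ref{bimod}) on the leading $R$-$R$ term, and then invokes both Rota-Baxter bimodule axioms Eqs.~(\ref{RBbimodulefamily-1})--(\ref{RBbimodulefamily-2}) on the two mixed $T$-terms to force the analogous cancellation. I expect the main obstacle to be purely organizational rather than conceptual: each associativity identity generates on the order of a dozen terms, and the argument hinges on correctly identifying which single summand is to be rewritten by Eq.~(\ref{RBbimodulefamily-1}) (respectively Eq.~(\ref{RBbimodulefamily-2})), so that the unwanted iterated-$T$ contributions cancel in pairs and the remaining pieces collapse, through Eq.~(\ref{RBfbihom}) and the definition of $\star_{\alpha,\beta}$, into the single weight-$\lambda$ combination $a\star_{\alpha,\beta}b$.
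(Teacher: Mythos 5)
Your proposal is correct and follows essentially the same route as the paper's proof: expand the two layers of $\blacktriangleright$, apply Eq.~(\ref{RBbimodulefamily-1}) to the single mixed term $R_\alpha(p^A_\alpha(a))\rhd_{\alpha,\beta\gamma}T_{\beta\gamma}(b\rhd_{\beta,\gamma}m)$ so that the iterated-$T$ contributions cancel in pairs, then use Eq.~(\ref{pR=Rp}), Eq.~(\ref{lmod}) and Eq.~(\ref{RBfbihom}) to collapse the surviving pieces into $(a\star_{\alpha,\beta}b)\blacktriangleright_{\alpha\beta,\gamma}q^M_\gamma(m)$, with the right-module and compatibility identities handled symmetrically. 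The only cosmetic difference is the order of operations (you rewrite the leading $R$-$R$ term via Eqs.~(\ref{lmod}) and (\ref{RBfbihom}) before dispatching the mixed term, whereas the paper does the reverse), and you also spell out the multiplicativity checks that the paper leaves implicit.
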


\begin{proof}
	For any $a,b\in A,\, m\in M,\;\alpha,\,\beta,\,\gamma\in \Omega$, we first prove that $ (M,\blacktriangleright_{\alpha,\,\beta},p_{\omega}^{M},q_{\omega}^{M})_{\alpha,\,\beta,\,\omega\in \Omega} $ is a left module over BiHom-$\Omega$-associative algebra $ A_{\star}$.
	\begin{align*}
		&p^A_{\alpha}(a)\blacktriangleright_{\alpha,\,\beta\,\gamma}(b\blacktriangleright_{\beta,\,\gamma}m)\\
		=&R_{\alpha}p^A_{\alpha}(a)\rhd_{\alpha,\,\beta\,\gamma}(R_{\beta}(b)\rhd_{\beta,\,\gamma}m-T_{\beta\,\gamma}(b\rhd_{\beta,\,\gamma}m))-T_{\alpha\,\beta\,\gamma}(p^A_{\alpha}(a)\rhd_{\alpha,\,\beta\,\gamma}(R_{\beta}(b)\rhd_{\beta,\,\gamma}m-T_{\beta\,\gamma}(b\rhd_{\beta,\,\gamma}m)))\\
		=&R_{\alpha}p^A_{\alpha}(a)\rhd_{\alpha,\,\beta\,\gamma}(R_{\beta}(b)\rhd_{\beta,\,\gamma}m)-R_{\alpha}(p^A_{\alpha}(a))\rhd_{\alpha,\,\beta\,\gamma}T_{\beta\,\gamma}(b\rhd_{\beta,\,\gamma}m) -T_{\alpha\,\beta\,\gamma}(p^A_{\alpha}(a)\rhd_{\alpha,\,\beta\,\gamma}(R_{\beta}(b)\rhd_{\beta,\,\gamma}m))\\
		&+T_{\alpha\,\beta\,\gamma}(p^A_{\alpha}(a)\rhd_{\alpha,\,\beta\,\gamma}T_{\beta\,\gamma}(b\rhd_{\beta,\,\gamma}m))\\
		=&R_{\alpha}p^A_{\alpha}(a)\rhd_{\alpha,\,\beta\,\gamma}(R_{\beta }(b)\rhd_{\beta,\,\gamma}m)-T_{\alpha\,\beta\,\gamma}(p^A_{\alpha}(a)\rhd_{\alpha,\,\beta\,\gamma}T_{\beta\,\gamma}(b\rhd_{\beta,\,\gamma}m) +R_{\alpha}(p^A_{\alpha}(a))\rhd_{\alpha,\,\beta\,\gamma}(b\rhd_{\beta,\,\gamma}m) \\
		&+\lambda p^A_{\alpha}(a)\rhd_{\alpha,\,\beta\,\gamma}(b\rhd_{\beta,\,\gamma}m)) -T_{\alpha\,\beta\,\gamma}(p^A_{\alpha}(a)\rhd_{\alpha,\,\beta\,\gamma}(R_{\beta}(b)\rhd_{\beta,\,\gamma}m))+T_{\alpha\,\beta\,\gamma}(p^A_{\alpha}(a)\rhd_{\alpha,\,\beta\,\gamma}T_{\beta\,\gamma}(b\rhd_{\beta,\,\gamma}m))\\
		&\hspace{1cm}(\text{by Eq.~(\ref{RBbimodulefamily-1})}\\
		=&R_{\alpha}p^A_{\alpha}(a)\rhd_{\alpha,\,\beta\,\gamma}(R_{\beta}(b)\rhd_{\beta,\,\gamma}m)-T_{\alpha\,\beta\,\gamma}(R_{\alpha}p^A_{\alpha}(a)\rhd_{\alpha,\,\beta\,\gamma}(b\rhd_{\beta,\,\gamma}m))- T_{\alpha\,\beta\,\gamma}(p^A_{\alpha}(a)\rhd_{\alpha,\,\beta\,\gamma}(R_{\beta}(b)\rhd_{\beta,\,\gamma}m))\\
		&-\lambda T_{\alpha\,\beta\,\gamma}(p^A_{\alpha}(a)\rhd_{\alpha,\,\beta\,\gamma}(b\rhd_{\beta,\,\gamma}m)),\\
		=&p^A_{\alpha}R_{\alpha}(a)\rhd_{\alpha,\,\beta\,\gamma}(R_{\beta}(b)\rhd_{\beta,\,\gamma}m)-T_{\alpha\,\beta\,\gamma}(p^A_{\alpha}(a)\rhd_{\alpha,\,\beta\,\gamma}(R_{\beta}(b)\rhd_{\beta,\,\gamma}m))- T_{\alpha\,\beta\,\gamma}(p^A_{\alpha}R_{\alpha}(a)\rhd_{\alpha,\,\beta\,\gamma}(b\rhd_{\beta,\,\gamma}m))\\
		&-\lambda T_{\alpha\,\beta\,\gamma}(p^A_{\alpha}(a)\cdot_{\alpha,\,\beta\,\gamma}(b\rhd_{\beta,\,\gamma}m))\\
		=&(R_{\alpha}(a)\cdot_{\alpha,\,\beta}R_{\beta}(b))\rhd_{\alpha\,\beta,\,\gamma}q_{\gamma}^{M}(m)-T_{\alpha\,\beta\,\gamma}((a\cdot_{\alpha,\,\beta }R_{\beta}(b)+R_{\alpha}(a)\cdot_{\alpha,\,\beta}b+\lambda a\cdot_{\alpha,\,\beta}b)\rhd_{\alpha\,\beta,\,\gamma}q_{\gamma}^{M}(m))\\&\hspace{1cm}(\text{by Eq.~(\ref{lmod})})\\
		=&R_{\alpha\beta}(a\star_{\alpha,\,\beta}b)\rhd_{\alpha\,\beta,\,\gamma}q_{\gamma}^{M}(m)-T_{\alpha\,\beta\,\gamma}((a\star_{\alpha,\,\beta}b)\rhd_{\alpha\,\beta,\,\gamma}q_{\gamma}^{M}(m))\\
		=&(a\star_{\alpha,\,\beta}b)\blacktriangleright_{\alpha\beta,\,\gamma}q_{\gamma}^{M}(m).
	\end{align*}
	
 Similarly, we obtain that $ (M,\blacktriangleleft_{\alpha,\,\beta},p^M_{\omega},q^M_{\omega})_{\alpha,\,\beta,\,\omega\in \Omega} $ is a right module over BiHom-$\Omega$-associative algebra $ A_{\star} $ and Eq.~(\ref{bimod}) holds for operations $ ( \blacktriangleright_{\alpha,\,\beta})_{\alpha,\,\beta\in \Omega} $ and $ ( \blacktriangleleft_{\alpha,\,\beta})_{\alpha,\,\beta\in \Omega} $.  Thus, $M_{\star}$ is a bimodule over BiHom-$\Omega$-associative algebra $ A_{\star}$. This completes the proof.
\end{proof}

\section{Cohomology of Rota-Baxter family BiHom-$\Omega$-associative algebras}\label{sec3}

In this section, we assume that $\Omega$ is a semigroup with unit $ 1 \in \Omega $. The unital condition of $\Omega$ is only useful in the coboundary operator of the cohomology at the degree 0 level.

\subsection{Cohomology of BiHom-$\Omega$-associative algebras}\label{sec-cohomo-BiHomassof}

In this subsection, inspired by the cohomology theory of BiHom-associative algebras in \cite{hochsch-BiHom-asso}, we first study the cohomology theory for BiHom-$\Omega$-associative algebras. Then, we introduce the BiHom-$\Omega$-Gerstenhaber bracket over the cochain complex of BiHom-$\Omega$-associative algebras. 

\smallskip 
From now on, if $V_{1},...,V_{n},W$ are vector spaces and $n\geq 1$, then we denote
\begin{align*}
	\mathrm{Hom}_{\Omega}(V_{1}\otimes\cdots V_{n},W)=\prod_{(\alpha_{1},\dots,\alpha_{n})\in\Omega^{n}}\mathrm{Hom}(V_{1}\otimes\dots\otimes V_{n},W),
\end{align*}
whose elements can be written as $f=(f_{\alpha_{1},\dots,\alpha_{n}}: V_{1}\otimes\cdots\otimes V_{n}\rightarrow W)_{\alpha_{1},\dots,\alpha_{n}\in\Omega}.$

Let $(M,\rhd_{\alpha,\,\beta},\lhd_{\alpha,\,\beta},p_{\omega}^M,q_{\omega}^M)_{\alpha,\,\beta,\,\omega\in \Omega}$ be a bimodule over BiHom-$\Omega$-associative algebra $( A,\cdot_{\alpha,\,\beta},p_{\omega},\\q_{\omega})_{\alpha,\,\beta,\,\omega\in \Omega} $. Now we describe the cochain complex $(\mathrm{C}_{\Omega}^{\bullet}(A,M),\delta_{\mathrm{Alg}}^{\bullet})$ of the BiHom-$\Omega$-associative algebra $A$ with coefficients in bimodule $M$. For $ n\geqslant 0 $, we define the space $ \mathrm{C}_{\Omega}^{n}(A,M) $ consisting of all families of multilinear maps of the form $f=(f_{_{\alpha_{1},\dots,\alpha_{n}}})_{{\alpha_{1},\dots,\alpha_{n}} \in \Omega} \in \mathrm{Hom}_{\Omega}(A^{\otimes n}, M)$ satisfying  
\begin{align*}
	p_{\alpha_{1}\dots\alpha_{n}}^{M}\circ f_{\alpha_{1},\dots,\alpha_{n}}=f_{\alpha_{1},\dots,\alpha_{n}} \circ (p_{\alpha_{1}},\dots,p_{\alpha_{n}}),\\
	q_{\alpha_{1}\dots\alpha_{n}}^{M}\circ f_{\alpha_{1},\dots,\alpha_{n}}=f_{\alpha_{1},\dots,\alpha_{n}} \circ (q_{\alpha_{1}},\dots,q_{\alpha_{n}}),
\end{align*}
for all $\alpha_{1},\dots,\alpha_{n}\in\Omega$.
The coboundary operator of the BiHom-$\Omega$-associative algebra $A$ with coefficients in the bimodule $M$:
\[ \delta_{\mathrm{Alg}}^{n}:\mathrm{C}_{\Omega}^{n}(A,M)\rightarrow \mathrm{C}_{\Omega}^{n+1}(A,M) \]
 is defined by
\[ \delta^0_{\mathrm{Alg}}(m)_{\alpha}(a_{1}):=a_{1}\rhd_{\alpha,1}m-m\lhd_{1,\alpha}a_{1}, \]
\begin{align}\label{BiHomOmHochschdiff}
	&(\delta_{\mathrm{Alg}}^{n}f)_{\alpha_{1},\dots,\alpha_{n+1}}(a_{1},\dots,a_{n+1}):=p_{\alpha_{1}}^{n-1}(a_{1})\rhd_{\alpha_{1},\alpha_{2}\dots\alpha_{n+1}}f_{\alpha_{2},\dots,\alpha_{n+1}}(a_{2},\dots,a_{n+1})\nonumber\\
	&+\sum_{i=1}^{n}(-1)^{i}f_{\alpha_{1},\dots,\alpha_{i}\alpha_{i+1},\dots,\alpha_{n+1}}(p_{\alpha_{1}}(a_{1}),\dots,p_{\alpha_{i-1}}(a_{i-1}),a_{i}\cdot_{\alpha_{i},\alpha_{i+1}}a_{i+1},q_{\alpha_{i+2}}(a_{i+2}),\dots,q_{\alpha_{n+1}}(a_{n+1}))\\
		&+(-1)^{n+1}f_{\alpha_{1},\dots,\alpha_{n}}(a_{1},\dots,a_{n})\lhd_{\alpha_{1}\dots\alpha_{n},\alpha_{n+1}}q_{\alpha_{n+1}}^{n-1}(a_{n+1}),\nonumber
\end{align}
for all $f=(f_{\alpha_{1},\dots,\alpha_{n}})_{\alpha_{1},\dots,\alpha_{n}\in\Omega}\in \mathrm{C}_{\Omega}^{n}(A,M),\, m\in M ,\; a_{1},a_{2},\dots,a_{n+1}\in A,\; \alpha_{1},\dots,\alpha_{n+1}\in\Omega$.

\begin{defn}
	An $n$-cochain $ f=(f_{\alpha_{1},\dots,\alpha_{n}})_{\alpha_{1},\dots,\alpha_{n}\in\Omega}\in \mathrm{C}_{\Omega}^{n}(A,M) $ is called an $ \bf{n\text{-}cocycle} $ if
	\[(\delta_{\mathrm{Alg}}^{n}f)_{\alpha_{1},\dots,\alpha_{n+1}}=0\]
	and the element of the form $(\delta_{\mathrm{Alg}}^{n-1}g)_{\alpha_{1},\dots,\alpha_{n}}$is called an $\bf{n\text{-}coboundary}$, where $g=(g_{\alpha_{1},\dots,\alpha_{n-1}})_{\alpha_{1},\dots,\alpha_{n-1}\in\Omega}\\\in \mathrm{C}_{\Omega}^{n-1}(A,M)$. The spaces consisting of n-cocycles and n-coboundaries are denoted $\mathrm{Z}_{\Omega}^n(A,M)$ and $\mathrm{B}_{\Omega}^n(A,M)$, respectively. Then the quotient space
	\[\mathrm{H}_{\Omega}^{n}(A,M)=\mathrm{Z}_{\Omega}^{n}(A,M)/\mathrm{B}_{\Omega}^{n}(A,M)\]
	is called the n-th cohomology group of $A$ with coefficients in bimodule $M$. We call $\big(\mathrm{C}_{\Omega}^{\bullet}(A,M),\delta_{\mathrm{Alg}}^{\bullet}\big)$ the \textbf{cochain complex of BiHom-$\Omega$-associative algebra $A$ with coefficients in bimodule $M$}. Its cohomology, denote by $\mathrm{H}_{\Omega}^{\bullet}(A,M)$, is called the \textbf{cohomology of BiHom-$\Omega$-associative algebra $A$ with coefficients in bimodule $M$}. 
\end{defn}

In particular, when $M$ is the regular bimodule, the cochain complex $\big(\mathrm{C}_{\Omega}^{\bullet}(A,A),\delta_{\mathrm{Alg}}^{\bullet}\big)$ is simply denoted by $\big(\mathrm{C}_{\Omega}^{\bullet}(A),\delta_{\mathrm{Alg}}^{\bullet}\big)$. The corresponding cohomology, simply denoted by $\mathrm{H}_{\Omega}^{\bullet}(A)$, is called the cohomology of the BiHom-$\Omega$-associative algebra $A$.

\begin{remark}\label{2-cocycle}
A 2-cocycle in $\mathrm{C}_{\Omega}^{2}(A,M)$ is a family of bilinear maps $ ( H_{\alpha,\,\beta})_{\alpha,\,\beta  \in \Omega}:A\otimes A\rightarrow M $ satisfying
\begin{align}\label{Hoc1}
	&H_{\alpha,\,\beta }\circ (p_{\alpha}\otimes p_{\beta})=p_{\alpha\,\beta}^{M}\circ H_{\alpha,\,\beta }, \;\;\;\;\; H_{\alpha,\,\beta }\circ (q_{\alpha}\otimes q_{\beta})=q_{\alpha\,\beta}^{M}\circ H_{\alpha,\,\beta },
\end{align}
\begin{align}\label{cocycle}
	\begin{split}
		&p_{\alpha}(x)\rhd_{\alpha,\,\beta\,\gamma}H_{\beta,\,\gamma}(y, z)- H_{\alpha \, \beta,\,\gamma}(x\cdot_{\alpha,\,\beta}y, q_{\gamma}(z))+ H_{\alpha,\,\beta\,\gamma}(p_{\alpha}(x), y\cdot_{\beta,\,\gamma}z)\\
		&\hspace{1cm}- H_{\alpha,\,\beta}(x, y)\lhd_{\alpha\,\beta,\,\gamma} q_{\gamma}(z)=0,
	\end{split}
\end{align}
for all $ x, y, z\in A ,\;\alpha,\,\beta,\,\gamma \in \Omega$. The space of 2-cocycles $\mathrm{Z}_{\Omega}^{2}(A,M)=\mathrm{Ker}\delta_{\mathrm{Alg}}^{2}\subseteq \mathrm{C}_{\Omega}^{2}(A,M)$ consists of all families of bilinear maps $ f = ( f_{\alpha,\,\beta})_{\alpha,\,\beta\in \Omega}:A\otimes A\rightarrow M$ satisfying $ (\delta_{\mathrm{Alg}}^{2}f)_{\alpha,\,\beta,\,\gamma}=0$, for all $\alpha,\,\beta,\,\gamma\in\Omega$.
\end{remark}

Next, we are going to introduce a Lie bracket on the underlying space of cochain complex of BiHom-$\Omega$-associative algebras. Let $(A,\mu_{\alpha,\,\beta},p_{\omega},q_{\omega})_{\alpha,\,\beta,\,\omega\in\Omega}$ be a BiHom-$\Omega$-associative algbera. If $f\in \mathrm{C}_{\Omega}^{n}(A)$, we denote $|f|=n-1$. Now, we give the definition of compositions on $\mathrm{C}_{\Omega}^{\bullet}(A):=\oplus_{n\geq 1}\mathrm{C}_{\Omega}^{n}(A)$ as follows.


\begin{defn}\label{def-composition}
	For any $f\in \mathrm{C}_{\Omega}^n(A),\, g_{i}\in \mathrm{C}_{\Omega}^{m_{i}}(A),\; 1\leq i\leq n$, we define the composition
	\[\diamond^{\Omega}: \mathrm{C}_{\Omega}^n(A)\otimes \mathrm{C}_{\Omega}^{m_{1}}(A)\otimes \cdots \otimes \mathrm{C}_{\Omega}^{m_{n}}(A)\rightarrow \mathrm{C}_{\Omega}^{m_{1}+\cdots+m_{n}}(A) \] by
	\begin{align*}
		\big(&f\diamond^{\Omega} (g_{1},\dots,g_{n})\big)_{\alpha_{1},\dots,\alpha_{m_{1}+\cdots+m_{n}}}(a_{1},\dots,a_{m_{1}+\cdots+m_{n}})\\
		=&f\Big(p_{\alpha_{1}\dots\alpha_{m_{1}}}^{\sum_{l> 1}^{n}|g_{l}|}\circ g_{1},\, p_{\alpha_{m_{1}+1}\dots\alpha_{m_{1}+m_{2}}}^{\sum_{l>2}^{n}|g_{l}|}\circ q_{\alpha_{m_{1}+1}\dots\alpha_{m_{1}+m_{2}}}^{|g_{1}|}\circ g_{2},\dots,p_{\alpha_{m_{1}+\cdots+m_{i-1}+1}\dots\alpha_{m_{1}+\cdots+m_{i}}}^{\sum_{l>i}^{n}|g_{l}|}\circ q_{\alpha_{m_{1}+\cdots+m_{i-1}+1}\dots\alpha_{m_{1}+\cdots+m_{i}}}^{\sum_{l<i}|g_{l}|}\circ g_{i},\\
		&\dots,q_{\alpha_{m_{1}+\cdots+m_{n-1}+1}\dots\alpha_{m_{1}+\cdots+m_{n}}}^{\sum_{l<n}|g_{l}|}\circ g_{n}\Big)(a_{1},\dots,a_{m_{1}+\cdots+m_{n}})\\
		=&f\Big(p_{\alpha_{1}\dots\alpha_{m_{1}}}^{\sum_{l> 1}^{n}|g_{l}|}\circ g_{1}(a_{1},\dots,a_{m_{1}}),\,p_{\alpha_{m_{1}+1}\dots\alpha_{m_{1}+m_{2}}}^{\sum_{l>2}^{n}|g_{l}|}\circ q_{\alpha_{m_{1}+1}\dots\alpha_{m_{1}+m_{2}}}^{|g_{1}|}\circ g_{2}(a_{m_{1}+1},\dots,a_{m_{1}+m_{2}}),\dots,\\
		&p_{\alpha_{m_{1}+\cdots+m_{i-1}+1}\dots\alpha_{m_{1}+\cdots+m_{i}}}^{\sum_{l>i}^{n}|g_{l}|}\circ q_{\alpha_{m_{1}+\cdots+m_{i-1}+1}\dots\alpha_{m_{1}+\cdots+m_{i}}}^{\sum_{l<i}|g_{l}|}\circ g_{i}(a_{m_{1}+\cdots+m_{i-1}+1},\dots,a_{m_{1}+\cdots+m_{i-1}+m_{i}}),\dots,\\
		&q_{\alpha_{m_{1}+\cdots+m_{n-1}+1}\dots\alpha_{m_{1}+\cdots+m_{n}}}^{\sum_{l<n}|g_{l}|}\circ g_{n}(a_{m_{1}+\cdots+m_{n-1}+1},\dots,a_{m_{1}+\cdots+m_{n-1}+m_{n}})  \Big),
	\end{align*}
for all $	\alpha_{1},\dots,\alpha_{m_{1}+\cdots+m_{n}}\in \Omega,\; a_{1},\dots,a_{m_{1}+\cdots+m_{n}}\in A$.
	
	In particular, for any $f\in \mathrm{C}_{\Omega}^n(A),\,g\in \mathrm{C}_{\Omega}^m(A)$ and $1\leq i\leq n$, we define the composition $ \diamond_{i}^{\Omega}: \mathrm{C}_{\Omega}^n(A)\otimes \mathrm{C}_{\Omega}^m(A)\rightarrow \mathrm{C}_{\Omega}^{n+m-1}(A) $ by
	\small{\begin{align}\label{eq-composition}
		f\diamond_{i}^{\Omega}g=&\Big((f\diamond_{i}^{\Omega}g)_{\alpha_{1},\dots,\alpha_{n+m-1}}\Big)_{\alpha_{1},\dots,\alpha_{n+m-1}\in\Omega}\nonumber\\
		:=&\Big(f_{\alpha_{1},\dots,\alpha_{i-1},\alpha_{i}\dots\alpha_{i+m-1},\alpha_{i+m},\dots,\alpha_{n+m-1}}(p_{\alpha_{1}}^{m-1},\dots,p_{\alpha_{i-1}}^{m-1},g_{\alpha_{i},\dots,\alpha_{i+m-1}},q_{\alpha_{i+m}}^{m-1},\dots,q_{\alpha_{n+m-1}}^{m-1})\Big)_{\alpha_{1},\dots,\alpha_{n+m-1}\in\Omega}.
	\end{align}}
\end{defn}
\begin{remark}
	With the notation of Definition~\ref{def-composition}, it is not difficult to verify that the definition of $\diamond_{i}^{\Omega}$ is well defined. That is $f\diamond_{i}^{\Omega}g\in \mathrm{C}_{\Omega}^{n+m-1}(A)$. 
\end{remark}
By~\cite[Proposition~4.1]{hochsch-BiHom-asso}, we know that the composition $\diamond_{i}^{\Omega}$ defines a non-symmetric operad structure on $\mathrm{C}_{\Omega}^{\bullet}(A)$ with the identity element $\text{id}_{A}$. Inspired by~\cite{L}, we give the concept of BiHom-$\Omega$-Gerstenhaber bracket as follows.
\begin{defn}\label{def-bracket}
	The \textbf{BiHom-$\Omega$-Gerstenhaber bracket} on $\mathrm{C}_{\Omega}^{\bullet}(A)=\oplus_{n\geq 1}\mathrm{C}_{\Omega}^n(A)$ is a bracket $[-,-]_{G}^{\Omega}$ of degree -1 defined by
	\begin{align*}
		[f,g]_{G}^{\Omega}=\sum_{i=1}^n(-1)^{(m-1)(i-1)}f\diamond_{i}^{\Omega}g-(-1)^{(n-1)(i-1)}g\diamond_{i}^{\Omega}f,
	\end{align*}
for all $f\in \mathrm{C}_{\Omega}^n(A),\,g\in \mathrm{C}_{\Omega}^m(A)$.
\end{defn}
Next, we give two examples to explain how to use $[-,-]_{G}^{\Omega}$ for calculations.
\begin{exam} \label{Ex: Gerstenhaber bracket}
	If $\mu=(\mu_{\alpha_{1},\,\alpha_{2}})_{\alpha_{1},\,\alpha_{2}\in\Omega}\in \mathrm{C}_{\Omega}^2(A), f=(f_{\alpha_{1},\,\alpha_{2},\,\alpha_{3}})_{\alpha_{1},\,\alpha_{2},\,\alpha_{3}\in\Omega}\in\mathrm{C}_{\Omega}^3(A)$, then by Definition~\ref{def-bracket}, we have
	\begin{align*}
		[\mu,\mu]_{G}^{\Omega}=&\sum_{i=1}^2(-1)^{i-1}\mu\diamond_{i}^{\Omega}\mu+\sum_{i=1}^2(-1)^{i-1}\mu\diamond_{i}^{\Omega}\mu\\
		=&2(\mu\diamond_{1}^{\Omega}\mu-\mu\diamond_{2}^{\Omega}\mu)\\
		=&\Big(2(\mu_{\alpha_{1}\,\alpha_{2},\,\alpha_{3}}(\mu_{\alpha_{1},\,\alpha_{2}}\otimes q_{\alpha_{3}})-\mu_{\alpha_{1},\,\alpha_{2}\,\alpha_{3}}(p_{\alpha_{1}}\otimes \mu_{\alpha_{2},\,\alpha_{3}}))\Big)_{\alpha_{1},\,\alpha_{2},\,\alpha_{3}\in\Omega},
	\end{align*}
and 
\begin{align*}
	[\mu,f]_{G}^{\Omega}=&\sum_{i=1}^{2}(-1)^{2(i-1)}\mu\diamond_{i}^{\Omega}f-(-1)^{i-1}f\diamond_{i}^{\Omega}\mu\\
	=&\mu\diamond_{1}^{\Omega}f-f\diamond_{1}^{\Omega}\mu+\mu\diamond_{2}^{\Omega}f+f\diamond_{2}^{\Omega}\mu\\
	=&\Big(\mu_{\alpha_{1}\alpha_{2}\alpha_{3},\,\alpha_{4}}(f_{\alpha_{1},\,\alpha_{2},\,\alpha_{3}}\otimes q_{\alpha_{4}}^2)-f_{\alpha_{1}\alpha_{2},\,\alpha_{3},\,\alpha_{4}}(\mu_{\alpha_{1},\,\alpha_{2}}\otimes q_{\alpha_{3}}\otimes q_{\alpha_{4}})+\mu_{\alpha_{1},\,\alpha_{2}\alpha_{3}\alpha_{4}}(p_{\alpha_{1}}^2\otimes f_{\alpha_{2},\,\alpha_{3}\alpha_{4}})\\
	&+f_{\alpha_{1},\,\alpha_{2}\alpha_{3},\,\alpha_{4}}(p_{\alpha_{1}}\otimes \mu_{\alpha_{2},\,\alpha_{3}}\otimes q_{\alpha_{4}})\Big)_{\alpha_{1},\,\alpha_{2},\,\alpha_{3},\,\alpha_{4}\in\Omega}.
\end{align*}
\end{exam}

For any $f\in \mathrm{C}_{\Omega}^{n+1}(A),\,g\in \mathrm{C}_{\Omega}^{m+1}(A)$ and by Definition~\ref{def-bracket}, we have $[f,g]_{G}^{\Omega}\in \mathrm{C}_{\Omega}^{n+m+1}(A)$. Hence, the degree of bracket $[-,-]_{G}^{\Omega}$ on space $\mathrm{C}_{\Omega}^{\bullet +1}(A)$ is 0. Combining BiHom-associative algebras~\cite{hochsch-BiHom-asso} and $\Omega$-associative algebras~\cite{L}, we come to the following conclusion.
\begin{prop}\label{graded-Liealg}
	 If $\mathrm{C}_{\Omega}^{\bullet +1}(A)=\oplus_{n\geq 0}\mathrm{C}_{\Omega}^{n+1}(A)$, then $(\mathrm{C}_{\Omega}^{\bullet +1}(A),[-,-]_{G}^{\Omega})$ is a graded Lie algebra.
\end{prop}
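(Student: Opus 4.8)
The plan is to deduce the statement from the general principle that the graded commutator of a graded (right) pre-Lie algebra is a graded Lie algebra, applied to the pre-Lie structure carried by the non-symmetric operad $(\mathrm{C}_{\Omega}^{\bullet}(A), \diamond_{i}^{\Omega}, \mathrm{id}_A)$ recorded just before the statement via \cite[Proposition~4.1]{hochsch-BiHom-asso}. On the shifted space $\mathrm{C}_{\Omega}^{\bullet+1}(A)$, where an element $f\in\mathrm{C}_{\Omega}^{n}(A)$ has degree $|f|=n-1$, I would first assemble the partial compositions $\diamond_{i}^{\Omega}$ into a single total composition
$$ f\,\bar{\diamond}^{\Omega}\,g := \sum_{i=1}^{n}(-1)^{(m-1)(i-1)}\, f\diamond_{i}^{\Omega}g, \qquad f\in\mathrm{C}_{\Omega}^{n}(A),\ g\in\mathrm{C}_{\Omega}^{m}(A), $$
and observe, by unwinding Definition~\ref{def-bracket}, that the BiHom-$\Omega$-Gerstenhaber bracket is exactly the graded commutator of this product, namely $[f,g]_{G}^{\Omega}=f\,\bar{\diamond}^{\Omega}\,g-(-1)^{(n-1)(m-1)}\,g\,\bar{\diamond}^{\Omega}\,f$.

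The verification then breaks into two steps. First I would establish that $\bar{\diamond}^{\Omega}$ satisfies the graded right-symmetric (pre-Lie) identity
$$ (f\,\bar{\diamond}^{\Omega}\,g)\,\bar{\diamond}^{\Omega}\,h - f\,\bar{\diamond}^{\Omega}\,(g\,\bar{\diamond}^{\Omega}\,h) = (-1)^{(m-1)(p-1)}\Big((f\,\bar{\diamond}^{\Omega}\,h)\,\bar{\diamond}^{\Omega}\,g - f\,\bar{\diamond}^{\Omega}\,(h\,\bar{\diamond}^{\Omega}\,g)\Big) $$
for $f\in\mathrm{C}_{\Omega}^{n}(A)$, $g\in\mathrm{C}_{\Omega}^{m}(A)$, $h\in\mathrm{C}_{\Omega}^{p}(A)$. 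Expanding both sides via $\diamond_{i}^{\Omega}$, the monomials split into two families: the \emph{nested} terms, in which one of $g,h$ is inserted into a slot of $f$ and the other into a slot of the already-inserted map, and the \emph{disjoint} terms, in which $g$ and $h$ are inserted into distinct slots of $f$. The disjoint terms are symmetric in $g$ and $h$ up to the Koszul sign $(-1)^{(m-1)(p-1)}$ and hence cancel in the difference defining the pre-Lie defect, while the nested terms are governed precisely by the two sequential/parallel operad associativity axioms of $(\mathrm{C}_{\Omega}^{\bullet}(A), \diamond_{i}^{\Omega})$ already supplied by \cite[Proposition~4.1]{hochsch-BiHom-asso}. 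Second, I would invoke the purely algebraic lemma that the graded commutator of any graded right pre-Lie product is a graded Lie bracket: bilinearity is clear, graded antisymmetry $[f,g]_{G}^{\Omega}=-(-1)^{(n-1)(m-1)}[g,f]_{G}^{\Omega}$ is immediate from the definition of the commutator, and the graded Jacobi identity follows from the pre-Lie identity by the standard three-term cancellation. Combining the two steps yields that $(\mathrm{C}_{\Omega}^{\bullet+1}(A),[-,-]_{G}^{\Omega})$ is a graded Lie algebra.

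The main obstacle is the first step: checking the pre-Lie identity amounts to tracking the many structure-map decorations (the powers of $p_{\omega}$ and $q_{\omega}$ recorded in Definition~\ref{def-composition}) together with the Koszul signs, and verifying that the decorations attached to a doubly-nested insertion agree on both sides. This compatibility is, however, exactly the content of the operad associativity relations already established in \cite[Proposition~4.1]{hochsch-BiHom-asso}, so no fresh computation with the maps $p_{\omega},q_{\omega}$ is required; the residual bookkeeping of signs is formal and runs parallel to the BiHom-associative case in \cite{hochsch-BiHom-asso} and the $\Omega$-associative case in \cite{L}. I would therefore present the argument compactly, recalling the operad axioms, deducing the pre-Lie identity from them, and making explicit only the sign computation that produces the graded antisymmetry and Jacobi identity for the commutator.
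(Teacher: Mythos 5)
Your proposal is correct and follows essentially the same route as the paper, which itself only remarks that the proof "is similar to" the BiHom-associative case of \cite{hochsch-BiHom-asso}: one uses the non-symmetric operad structure given by the $\diamond_{i}^{\Omega}$ to build a graded (right) pre-Lie product by summing partial compositions with Koszul signs, and then takes its graded commutator. Your rewriting $[f,g]_{G}^{\Omega}=f\,\bar{\diamond}^{\Omega}\,g-(-1)^{(n-1)(m-1)}g\,\bar{\diamond}^{\Omega}\,f$ is the intended (and standard) reading of Definition~\ref{def-bracket}, whose displayed formula is ambiguous about the range of the second summation, so your version in fact supplies the precise normalization the paper leaves implicit.
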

\begin{proof}
	The proof is similar to the way of ~\cite{hochsch-BiHom-asso}.
\end{proof}
Since $(\mathrm{C}_{\Omega}^{\bullet +1}(A),[-,-]_{G}^{\Omega})$ is a graded Lie algebra, we get
\begin{align*}
	[f,g]_{G}^{\Omega}=&-(-1)^{|f||g|}[g,f]_{G}^{\Omega},\\
	(-1)^{|f||h|}[f,[g,h]_{G}^{\Omega}]_{G}^{\Omega}+(-1)&^{|g||f|}[g,[h,f]_{G}^{\Omega}]_{G}^{\Omega}+(-1)^{|h||g|}[h,[f,g]_{G}^{\Omega}]_{G}^{\Omega}=0,
\end{align*}
for all $f,g,h\in \mathrm{C}_{\Omega}^{\bullet +1}(A)$.

 Now we give an important result about the structure of BiHom-$\Omega$-associative algebras.
 \begin{prop}\label{prop-MC}
 	If $\mu=( \mu_{\alpha,\,\beta})_{\alpha,\,\beta\in \Omega}\in \mathrm{C}_{\Omega}^2(A)$. Then $(A,\mu_{\alpha,\,\beta},p_{\omega},q_{\omega})_{\alpha,\,\beta,\,\omega\in \Omega}$ is a BiHom-$\Omega$-associative algebra if and only if $\mu$ is a Maurer-Cartan element of graded Lie algebra $(\mathrm{C}_{\Omega}^{\bullet +1}(A),[-,-]_{G}^{\Omega})$, i.e. $[\mu,\mu]_{G}^{\Omega}=0$.
 \end{prop}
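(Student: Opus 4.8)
The plan is to read the Maurer--Cartan equation $[\mu,\mu]_G^\Omega=0$ off term by term and match it against the two axioms~(\ref{eqalfabeta}) and~(\ref{eqasso}) that define a BiHom-$\Omega$-associative structure. What makes this short is that one of those two axioms is already built into the hypothesis $\mu\in\mathrm{C}_\Omega^2(A)$, so only the associativity axiom will have to be extracted from the bracket.

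First I would unpack the hypothesis $\mu\in\mathrm{C}_\Omega^2(A)$. By the defining conditions of the cochain space $\mathrm{C}_\Omega^n(A,M)$ specialized to $n=2$ and the regular bimodule $M=A$, the family $\mu=(\mu_{\alpha,\beta})_{\alpha,\beta\in\Omega}$ automatically satisfies
\[
p_{\alpha\beta}\circ\mu_{\alpha,\beta}=\mu_{\alpha,\beta}\circ(p_\alpha\otimes p_\beta),\qquad q_{\alpha\beta}\circ\mu_{\alpha,\beta}=\mu_{\alpha,\beta}\circ(q_\alpha\otimes q_\beta),
\]
which, upon writing $\mu_{\alpha,\beta}(x,y)=x\cdot_{\alpha,\beta}y$, are exactly the multiplicativity relations~(\ref{eqalfabeta}). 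Hence, for $\mu\in\mathrm{C}_\Omega^2(A)$, the tuple $(A,\mu_{\alpha,\beta},p_\omega,q_\omega)_{\alpha,\beta,\omega\in\Omega}$ is a BiHom-$\Omega$-associative algebra if and only if it satisfies the single remaining axiom~(\ref{eqasso}).

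Next I would invoke the explicit computation already carried out in Example~\ref{Ex: Gerstenhaber bracket}, namely
\[
[\mu,\mu]_G^\Omega=\Big(2\big(\mu_{\alpha_1\alpha_2,\alpha_3}(\mu_{\alpha_1,\alpha_2}\otimes q_{\alpha_3})-\mu_{\alpha_1,\alpha_2\alpha_3}(p_{\alpha_1}\otimes\mu_{\alpha_2,\alpha_3})\big)\Big)_{\alpha_1,\alpha_2,\alpha_3\in\Omega}.
\]
Evaluating the $(\alpha_1,\alpha_2,\alpha_3)$-component on an arbitrary triple $(x,y,z)\in A^{\otimes 3}$ and returning to the dot notation, the vanishing of this component for all $\alpha_1,\alpha_2,\alpha_3$ and all $x,y,z$ reads
\[
(x\cdot_{\alpha_1,\alpha_2}y)\cdot_{\alpha_1\alpha_2,\alpha_3}q_{\alpha_3}(z)=p_{\alpha_1}(x)\cdot_{\alpha_1,\alpha_2\alpha_3}(y\cdot_{\alpha_2,\alpha_3}z),
\]
which, after renaming $(\alpha_1,\alpha_2,\alpha_3)\mapsto(\alpha,\beta,\gamma)$, is precisely the BiHom-$\Omega$-associativity~(\ref{eqasso}). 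Thus $[\mu,\mu]_G^\Omega=0$ is equivalent to~(\ref{eqasso}), and combined with the previous paragraph this gives the claimed equivalence in both directions. The only point that warrants a word of care is the overall factor $2$ coming out of the bracket: the implication ``BiHom-$\Omega$-associative $\Rightarrow$ Maurer--Cartan'' is immediate since the associator then vanishes, while for the converse one notes that $[\mu,\mu]_G^\Omega$ is this associator up to the scalar $2$, so its vanishing forces~(\ref{eqasso}). I do not expect any genuine obstacle here—the entire argument is the bookkeeping of the structure maps $p_\omega,q_\omega$ inside the operadic composition $\diamond_i^\Omega$, and that bookkeeping has already been done in Example~\ref{Ex: Gerstenhaber bracket}.
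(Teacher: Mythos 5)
Your proof is correct and follows essentially the same route as the paper, whose entire proof is the remark that the statement is a direct corollary of Example~\ref{Ex: Gerstenhaber bracket}; you simply spell out the two steps (multiplicativity being built into $\mathrm{C}_{\Omega}^{2}(A)$, and the bracket computing twice the BiHom-$\Omega$-associator) that the paper leaves implicit. Your aside about the factor $2$ is a reasonable extra precaution (over a general base ring one tacitly needs $2$ not to be a zero divisor), but it does not change the argument.
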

\begin{proof}
	This is a direct corollary of Example~\ref{Ex: Gerstenhaber bracket}.
\end{proof}

 \begin{coro} \label{coro: dg Lie}
 	If $(A,\mu_{\alpha,\,\beta},p_{\omega},q_{\omega})_{\alpha,\,\beta,\,\omega\in \Omega}$ is a BiHom-$\Omega$-associative algebra, then $(\mathrm{C}_{\Omega}^{\bullet +1}(A),[-,-]_{G}^{\Omega},\\\delta=[\mu,-]_{G}^{\Omega})$ is a differential graded Lie algebra, where $\mu=(\mu_{\alpha,\,\beta})_{\alpha,\,\beta\in\Omega}$.
 \end{coro}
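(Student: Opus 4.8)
The plan is to deduce this from the two preceding results via the standard mechanism that turns a graded Lie algebra equipped with a Maurer--Cartan element into a differential graded Lie algebra. Proposition~\ref{graded-Liealg} already provides the graded Lie algebra $(\mathrm{C}_{\Omega}^{\bullet+1}(A),[-,-]_{G}^{\Omega})$, and Proposition~\ref{prop-MC} provides $[\mu,\mu]_{G}^{\Omega}=0$, since $\mu$ is exactly the BiHom-$\Omega$-associative structure. Hence it remains only to check that $\delta=[\mu,-]_{G}^{\Omega}$ is a differential of degree $+1$ that is compatible with the bracket; that is, that $\delta$ raises degree by one, that $\delta$ is a graded derivation, and that $\delta^{2}=0$.

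The degree is immediate: as $\mu\in\mathrm{C}_{\Omega}^{2}(A)$ we have $|\mu|=1$, so $[\mu,f]_{G}^{\Omega}\in\mathrm{C}_{\Omega}^{n+2}(A)$ for $f\in\mathrm{C}_{\Omega}^{n+1}(A)$. For the graded derivation property $\delta[f,g]_{G}^{\Omega}=[\delta f,g]_{G}^{\Omega}+(-1)^{|f|}[f,\delta g]_{G}^{\Omega}$, I would simply rearrange the graded Jacobi identity recorded just after Proposition~\ref{graded-Liealg}, specialising one argument to $\mu$ and using graded antisymmetry to put the two inner brackets in place. This step is purely formal and presents no difficulty.

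The genuine point is $\delta^{2}=0$, and this is where I expect the main care to be needed. Substituting the triple $(\mu,\mu,f)$ into the graded Jacobi identity and invoking $[\mu,\mu]_{G}^{\Omega}=0$ together with antisymmetry collapses everything to $2\,[\mu,[\mu,f]_{G}^{\Omega}]_{G}^{\Omega}=0$; over a field of characteristic $\neq 2$ this already gives $\delta^{2}=0$. Since $\mathbf{k}$ is an arbitrary commutative ring, however, I would argue on the nose at the level of the non-symmetric operad $(\mathrm{C}_{\Omega}^{\bullet}(A),\diamond_{i}^{\Omega})$ of~\cite[Proposition~4.1]{hochsch-BiHom-asso}, where the relevant input is the sharper relation $\mu\diamond_{1}^{\Omega}\mu=\mu\diamond_{2}^{\Omega}\mu$, i.e. the BiHom-$\Omega$-associativity of Eq.~(\ref{eqasso}) (compare Example~\ref{Ex: Gerstenhaber bracket}), rather than merely $[\mu,\mu]_{G}^{\Omega}=0$. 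Expanding $[\mu,[\mu,f]_{G}^{\Omega}]_{G}^{\Omega}$ through the brace operations, the terms carrying both copies of $\mu$ assemble into expressions that vanish by this associativity, so $\delta^{2}=0$ holds without any division by $2$; equivalently, one identifies $\delta$ up to sign with the coboundary $\delta_{\mathrm{Alg}}^{\bullet}$ of Eq.~(\ref{BiHomOmHochschdiff}) on the regular bimodule, whose square is zero by the explicit cancellation of adjacent terms. Assembling the degree count, the derivation identity, and $\delta^{2}=0$ then gives the differential graded Lie algebra structure.
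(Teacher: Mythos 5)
Your proposal is correct and follows exactly the route the paper intends: the corollary is stated there without proof, as the standard consequence of Proposition~\ref{graded-Liealg} (graded Lie algebra structure) and Proposition~\ref{prop-MC} ($[\mu,\mu]_{G}^{\Omega}=0$), with the degree count and the graded-derivation property extracted from the Jacobi identity precisely as you describe. The one place where you go beyond the paper is the observation that the Jacobi-identity argument only yields $2\,[\mu,[\mu,f]_{G}^{\Omega}]_{G}^{\Omega}=0$, which does not suffice over the arbitrary commutative unitary ring $\bfk$ fixed in the paper; the paper silently elides this (and indeed later \emph{uses} the corollary to conclude $\delta_{\mathrm{alg}}\circ\delta_{\mathrm{alg}}=0$ in the proposition identifying $\delta_{\mathrm{alg}}$ with $\delta_{\mathrm{Alg}}$). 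Your proposed repair --- arguing on the nose from $\mu\diamond_{1}^{\Omega}\mu=\mu\diamond_{2}^{\Omega}\mu$ in the non-symmetric operad, or equivalently identifying $\delta$ up to sign with the coboundary of Eq.~(\ref{BiHomOmHochschdiff}) and checking $\delta_{\mathrm{Alg}}^{n+1}\circ\delta_{\mathrm{Alg}}^{n}=0$ by the usual pairwise cancellation (which goes through here using the multiplicativity relations of Eq.~(\ref{eqalfabeta}) and the module axioms) --- is sound and is the right way to close the gap without inverting $2$. In short: same approach as the paper, executed with more care on a point the paper takes for granted.
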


 \begin{prop}
 	If we define the operation on $\mathrm{C}_{\Omega}^{\bullet +1}(A)$ by
 	\[\delta_{\mathrm{alg}}(f):=(-1)^{|f|}\delta(f)=(-1)^{|f|}[\mu,f]_{G}^{\Omega},\quad \text{for all $f\in \mathrm{C}_{\Omega}^{\bullet +1}(A)$,}\]
 	then $\delta_{\mathrm{alg}}$ is a differential of the cochain complex of BiHom-$\Omega$-associative algebra $(A,\mu_{\alpha,\,\beta},p_{\omega},q_{\omega})_{\alpha,\,\beta,\,\omega\in \Omega}$. Moreover, this differential $\delta_{\mathrm{alg}}$ is exactly the coboundary operator $\delta_{\mathrm{Alg}}$ of BiHom-$\Omega$-associative algebra $A$ as defined in Eq.~(\ref{BiHomOmHochschdiff}).
 \end{prop}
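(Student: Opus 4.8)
The plan is to prove the two assertions in turn: first that $\delta_{\mathrm{alg}}$ is a differential, and then that it agrees termwise with the explicit operator of Eq.~(\ref{BiHomOmHochschdiff}). For the first assertion I would invoke Corollary~\ref{coro: dg Lie}: since $\mu$ is a Maurer--Cartan element by Proposition~\ref{prop-MC}, the triple $(\mathrm{C}_{\Omega}^{\bullet+1}(A),[-,-]_{G}^{\Omega},\delta=[\mu,-]_{G}^{\Omega})$ is a differential graded Lie algebra, so in particular $\delta^{2}=0$. It then remains only to observe that the sign twist does not destroy this. As $\delta=[\mu,-]_{G}^{\Omega}$ raises the degree $|\cdot|$ by $|\mu|=1$, we have $|\delta_{\mathrm{alg}}(f)|=|f|+1$, whence $\delta_{\mathrm{alg}}^{2}(f)=(-1)^{|f|+1}\delta\big((-1)^{|f|}\delta(f)\big)=(-1)^{2|f|+1}\delta^{2}(f)=-\delta^{2}(f)=0$. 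Because $\delta$ sends $\mathrm{C}_{\Omega}^{n}(A)$ into $\mathrm{C}_{\Omega}^{n+1}(A)$, so does $\delta_{\mathrm{alg}}$, and thus $\delta_{\mathrm{alg}}$ is a differential on the cochain complex.

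For the second assertion I would carry out a direct term-by-term identification. Fix $f\in \mathrm{C}_{\Omega}^{n}(A)$, so that $|f|=n-1$, and expand $[\mu,f]_{G}^{\Omega}$ using Definition~\ref{def-bracket}. Because $\mu\in \mathrm{C}_{\Omega}^{2}(A)$, the only surviving compositions are $\mu\diamond_{1}^{\Omega}f$, $\mu\diamond_{2}^{\Omega}f$, and $f\diamond_{i}^{\Omega}\mu$ for $1\leq i\leq n$. Working in the regular bimodule, where $\rhd_{\alpha,\,\beta}=\lhd_{\alpha,\,\beta}=\cdot_{\alpha,\,\beta}=\mu_{\alpha,\,\beta}$, and reading off the structure maps from Eq.~(\ref{eq-composition}), I would match $\mu\diamond_{2}^{\Omega}f$ with the leading term $p_{\alpha_{1}}^{n-1}(a_{1})\rhd f_{\alpha_{2},\dots,\alpha_{n+1}}(a_{2},\dots,a_{n+1})$, $\mu\diamond_{1}^{\Omega}f$ with the trailing term $f_{\alpha_{1},\dots,\alpha_{n}}(a_{1},\dots,a_{n})\lhd q_{\alpha_{n+1}}^{n-1}(a_{n+1})$, and each $f\diamond_{i}^{\Omega}\mu$ with the $i$-th interior summand $f_{\dots}(p_{\alpha_{1}}(a_{1}),\dots,a_{i}\cdot_{\alpha_{i},\alpha_{i+1}}a_{i+1},\dots,q_{\alpha_{n+1}}(a_{n+1}))$ of Eq.~(\ref{BiHomOmHochschdiff}). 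Multiplying through by $(-1)^{|f|}=(-1)^{n-1}$ and collecting the accumulated signs, the leading coefficient should reduce to $+1$, the trailing coefficient to $(-1)^{n+1}$, and the $i$-th interior coefficient to $(-1)^{i}$, reproducing Eq.~(\ref{BiHomOmHochschdiff}) exactly.

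The main obstacle is the sign bookkeeping. One must keep track of three separate sources of signs: the antisymmetry sign $(-1)^{|\mu||f|}=(-1)^{n-1}$ carried by the bracket (visible, e.g., in the sign flip of the second summand of $[\mu,\mu]_{G}^{\Omega}$ in Example~\ref{Ex: Gerstenhaber bracket}), the internal weights $(-1)^{(m-1)(i-1)}$ of the compositions from Definition~\ref{def-bracket}, and the overall twist $(-1)^{|f|}$. These must be shown to combine uniformly for all $n$, both even and odd, since dropping any one of them would already fail to match the interior signs. A secondary technical point is to verify that the iterated maps $p_{\alpha_{1}}^{n-1}$ and $q_{\alpha_{n+1}}^{n-1}$ generated by the composition in Eq.~(\ref{eq-composition}) coincide with those of Eq.~(\ref{BiHomOmHochschdiff}); this is where the commutativity of the families $(p_{\omega})_{\omega\in\Omega}$, $(q_{\omega})_{\omega\in\Omega}$ and the multiplicativity~(\ref{eqalfabeta}) enter.
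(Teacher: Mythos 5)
Your proposal is correct and follows essentially the same route as the paper: the paper likewise deduces $\delta_{\mathrm{alg}}\circ\delta_{\mathrm{alg}}=0$ from Corollary~\ref{coro: dg Lie} and then expands $(-1)^{n-1}[\mu,f]_{G}^{\Omega}$ via Definition~\ref{def-bracket} and Eq.~(\ref{eq-composition}), arriving at exactly the term identification and signs ($+1$ for $\mu\diamond_{2}^{\Omega}f$, $(-1)^{n+1}$ for $\mu\diamond_{1}^{\Omega}f$, $(-1)^{i}$ for $f\diamond_{i}^{\Omega}\mu$) that you describe. Your explicit check that the degree shift makes the sign twist compatible with $\delta^{2}=0$ is a detail the paper leaves implicit, but it is the same argument.
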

 \begin{proof}
	According to Corollary~\ref{coro: dg Lie}, we have $\delta_{\mathrm{alg}} \circ \delta_{\mathrm{alg}}$ = 0. Moreover,
 \begin{align*}
 	\delta_{\mathrm{alg}}^{n}(f)=&(-1)^{|f|}\delta (f)=(-1)^{n-1}[\mu,f]_{G}^{\Omega}\\
 	=&(-1)^{n-1}\Big(\sum_{i=1}^{2}(-1)^{(n-1)(i-1)}\mu\diamond_{i}^{\Omega}f-(-1)^{n-1}\sum_{i=1}^n(-1)^{i-1}f\diamond_{i}^{\Omega}\mu\Big)\\
 	=&\Big(\mu_{\alpha_{1},\,\alpha_{2},\dots,\alpha_{n+1}}(p_{\alpha_{1}}^{n-1}\otimes f_{\alpha_{2},\dots,\alpha_{n+1}})\\
 	&+\sum_{i=1}^n(-1)^if_{\alpha_{1},\dots,\alpha_{i-1},\alpha_{i}\alpha_{i+1},\,\alpha_{i+2},\dots,\alpha_{n+1}}\big(p_{\alpha_{1}}\otimes \cdots \otimes p_{\alpha_{i-1}}\otimes\mu_{\alpha_{i},\,\alpha_{i+1}}\otimes q_{\alpha_{i+2}}\otimes\cdots\otimes q_{\alpha_{n+1}}\big)\\
 	&+(-1)^{n-1}\mu_{\alpha_{1}\dots\alpha_{n},\,\alpha_{n+1}}(f_{\alpha_{1},\dots,\alpha_{n}}\otimes q_{\alpha_{n+1}}^{n-1})\Big)_{\alpha_{1},\dots,\alpha_{n+1}\in\Omega}\hspace{1cm}(\text{by Eq.~(\ref{eq-composition})})\\
 	=&\Big((\delta_{\mathrm{Alg}}^{n}f)_{\alpha_{1},\dots,\alpha_{n+1}}\Big)_{\alpha_{1},\dots,\alpha_{n+1}\in\Omega}.
 \end{align*}
This completes the proof.
 \end{proof}

 \medskip
\subsection{Cohomology of Rota-Baxter family on BiHom-$\Omega$-associative algebras}\label{sec-coho-RBf}

Let $ (A, \cdot_{\alpha,\,\beta }, R_{\omega},p_{\omega},\\q_{\omega})_{\alpha,\,\beta,\,\omega\in \Omega} $ be a Rota-Baxter family BiHom-$\Omega$-associative algebra of weight $\lambda$ and $ (M, \rhd_{\alpha,\,\beta},\lhd_{\alpha,\,\beta},\\T_{\omega},p_{\omega}^{M},q_{\omega}^{M})_{\alpha,\,\beta,\,\omega\in \Omega} $ be a Rota-Baxter family BiHom-$\Omega$-bimodule over $A$. According to Proposition~\ref{defstar} and Proposition~\ref{defrhd}, we get a new BiHom-$\Omega$-associative algebra $ A_{\star} $ and a new bimodule $M_{\star}$ over it. Now we define \[\mathrm{C}_{\mathrm{RBF}_{\lambda}}^{n}(A,M):=\mathrm{C}_{\Omega}^{n}(A_{\star},M_{\star}),\]
and a differential $\partial^{n} : \mathrm{C}_{\mathrm{RBF}_{\lambda}}^{n}(A,M)\longrightarrow \mathrm{C}_{\mathrm{RBF}_{\lambda}}^{n+1}(A,M)$
by
\[\big(\partial^0(m)\big)_{\alpha}(a):=a\blacktriangleright_{\alpha,1}m-m\blacktriangleleft_{1,\alpha}a=R_{\alpha}(a)\rhd_{\alpha,1}m-T_{\alpha}(a\rhd_{\alpha,1}m)-m\lhd_{1,\alpha}R_{\alpha}(a)+T_{\alpha}(m\lhd_{1,\alpha}a),\]
and
	\begin{align}\label{defof-partial}	&(\partial^{n}(f))_{\alpha_{1},\dots,\alpha_{n+1}}(a_{1},\dots,a_{n+1})\nonumber\\
	=&p_{\alpha_{1}}^{n-1}(a_{1})\blacktriangleright_{\alpha_{1},\alpha_{2}\dots\alpha_{n+1}}f_{\alpha_{2},\dots,
\alpha_{n+1}}(a_{2},\dots,a_{n+1})+\sum_{i=1}^{n}(-1)^{i}f_{\alpha_{1},\dots,\alpha_{i-1},\alpha_{i}\alpha_{i+1},
\alpha_{i+2},\dots,\alpha_{n+1}}(p_{\alpha_{1}}(a_{1}),\dots,p_{\alpha_{i-1}}(a_{i-1}),\nonumber\\
		& a_{i}\star_{\alpha_{i},\alpha_{i+1}}a_{i+1},q_{\alpha_{i+2}}(a_{i+2}),\dots,q_{\alpha_{n+1}}
(a_{n+1}))+(-1)^{n+1}f_{\alpha_{1},\dots,\alpha_{n}}\blacktriangleleft_{\alpha_{1}\dots\alpha_{n},
\alpha_{n+1}}q_{\alpha_{n+1}}^{n-1}(a_{n+1})\nonumber\\
		=&R_{\alpha_{1}}(p_{\alpha_{1}}^{n-1}(a_{1}))\rhd_{\alpha_{1},\alpha_{2}\dots\alpha_{n+1}}
f_{\alpha_{2},\dots,\alpha_{n+1}}(a_{2},\dots,a_{n+1})-T_{\alpha_{1}\dots\alpha_{n+1}}(p_{\alpha_{1}}^{n-1}(a_{1})
\rhd_{\alpha_{1},\alpha_{2}\dots\alpha_{n+1}}f_{\alpha_{2},\dots,\alpha_{n+1}}(a_{2},\dots,a_{n+1}))\nonumber\\
		&+\sum_{i=1}^{n}(-1)^{i}f_{\alpha_{1},\dots,\alpha_{i}\alpha_{i+1},\dots,\alpha_{n+1}}
(p_{\alpha_{1}}(a_{1}),\dots,p_{\alpha_{i-1}}(a_{i-1}),a_{i}\cdot_{\alpha_{i},\alpha_{i+1}}
R_{\alpha_{i+1}}(a_{i+1})+R_{\alpha_{i}}(a_{i})\cdot_{\alpha_{i},\alpha_{i+1}}a_{i+1}\\
		&+\lambda a_{i}\cdot_{\alpha_{i},\alpha_{i+1}}a_{i+1},q_{\alpha_{i+2}}(a_{i+2}),\dots,q_{\alpha_{n+1}}
(a_{n+1}))+(-1)^{n+1}f_{\alpha_{1},\dots,\alpha_{n}}(a_{1},\dots,a_{n})\lhd_{\alpha_{1}\dots\alpha_{n},\alpha_{n+1}}
R_{\alpha_{n+1}}q_{\alpha_{n+1}}^{n-1}(a_{n+1})\nonumber\\
		&-(-1)^{n+1}T_{\alpha_{1}\dots\alpha_{n+1}}(f_{\alpha_{1},\dots,\alpha_{n}}(a_{1},\dots,a_{n})
\lhd_{\alpha_{1}\dots\alpha_{n},\alpha_{n+1}}q_{\alpha_{n+1}}^{n-1}(a_{n+1})),\nonumber
\end{align}
for all $n\geq 1,\; a_{1},\dots,a_{n+1}\in A,\, \alpha_{1},\dots,\alpha_{n+1}\in \Omega$.

\begin{defn}
	We call $ (\mathrm{C}_{\mathrm{RBF}_{\lambda}}^{\bullet}(A,M),\partial^{\bullet}) $ the \textbf{cochain complex of Rota-Baxter family $ ( R_{\omega})_{\omega\in \Omega}$ of weight $\lambda$} on BiHom-$\Omega$-associative algebra $A$ with coefficients in bimodule $M$. Its cohomology, denote by  $ \mathrm{H}_{\mathrm{RBF}_{\lambda}}^{\bullet}(A,M) $, is called the \textbf{cohomology of Rota-Baxter family $ ( R_{\omega})_{\omega\in \Omega}$ of weight $\lambda$} on BiHom-$\Omega$-associative algebra $A$ with coefficients in bimodule $M$. 
\end{defn}

	In particular, when $M$ is the regular bimodule, the cochain complex $(\mathrm{C}_{\mathrm{RBF}_{\lambda}}^{\bullet}(A,A),\partial^{\bullet})$ is simply denoted by $(\mathrm{C}_{\mathrm{RBF}_{\lambda}}^{\bullet}(A),\partial^{\bullet})$. The corresponding cohomology, simply denoted by $\mathrm{H}_{\mathrm{RBF}_{\lambda}}^{\bullet}(A)$, is called the cohomology of Rota-Baxter family $ ( R_{\omega})_{\omega\in \Omega} $.

\begin{remark}
A 1-cocycle in $\mathrm{C}_{\mathrm{RBF}_{\lambda}}^{1}(A,M)$ is a family of linear maps $(f_{\alpha})_{\alpha\in\Omega}: A\rightarrow M$ satisfying
	\begin{align*}
		&\hspace{2cm}p_{\alpha}^{M}\circ f_{\alpha}=f_{\alpha}\circ p_{\alpha},\quad q_{\alpha}^{M}\circ f_{\alpha}=f_{\alpha}\circ q_{\alpha},\\
		(\partial^{1}f)_{\alpha,\,\beta}(x,y)=&R_{\alpha}(x)\rhd_{\alpha,\,\beta}f_{\beta}(y)-T_{\alpha\,\beta}(x\rhd_{\alpha,\,\beta}f_{\beta}(y))-f_{\alpha\,\beta}(x\cdot_{\alpha,\,\beta}R_{\beta}(y)+R_{\alpha}(x)\cdot_{\alpha,\,\beta}y+\lambda x\cdot_{\alpha,\,\beta}y)\\
		&+f_{\alpha}(x)\lhd_{\alpha,\,\beta}R_{\beta}(y)-T_{\alpha\,\beta}(f_{\alpha}(x)\lhd_{\alpha,\,\beta}y)=0,
	\end{align*}
for all $x,y\in A,\,\alpha,\,\beta\in\Omega$.
\end{remark}

\subsection{Cohomology of Rota-Baxter family BiHom-$\Omega$-associative algebras}

In this subsection, we will combine the cohomology of  BiHom-$\Omega$-associative algebras and the cohomology of Rota-Baxter family on BiHom-$\Omega$-associative algebras to study the cohomology theory for Rota-Baxter family BiHom-$\Omega$-associative algebras.

Let $ (M,\rhd_{\alpha,\,\beta},\lhd_{\alpha,\,\beta},T_{\omega},p_{\omega}^{M},q_{\omega}^{M})_{\alpha,\,\beta,\,\omega\in \Omega} $ be a Rota-Baxter family BiHom-$\Omega$-bimodule over Rota-Baxter family BiHom-$\Omega$-associative algebra $(A,\cdot_{\alpha,\,\beta},R_{\omega},p_{\omega},q_{\omega})_{\alpha,\,\beta,\,\omega\in \Omega}$. Now, let's construct a chain map
\[\Phi^{\bullet} : \mathrm{C}_{\Omega}^{\bullet}(A,M) \rightarrow \mathrm{C}_{\mathrm{RBF}_{\lambda}}^{\bullet}(A,M),\]
that is
\begin{align*}
	\xymatrix{
		\mathrm{C}^{0}_{\Omega}(A,M)\ar[r]^-{\delta_{\mathrm{Alg}}^0}\ar[d]^-{\Phi^0}& \mathrm{C}^{1}_{\Omega}(A,M)\ar@{.}[r]\ar[d]^-{\Phi^1}& \mathrm{C}^{n}_{\Omega}(A,M)\ar[r]^-{\delta_{\mathrm{Alg}}^n}\ar[d]^-{\Phi^n}& \mathrm{C}^{n+1}_{\Omega}(A,M)\ar[d]^{\Phi^{n+1}}\ar@{.}[r]&\\
		C ^0_{\mathrm{RBF}_{\lambda}}(A,M)\ar[r]^-{\partial^0}& C ^1_{\mathrm{RBF}_{\lambda}}(A,M)\ar@{.}[r]& C ^n_{\mathrm{RBF}_{\lambda}}(A,M)\ar[r]^-{\partial^n}& C ^{n+1}_{\mathrm{RBF}_{\lambda}}(A,M)\ar@{.}[r]&	.}
\end{align*}

Define $ \Phi^{0}=\text{Id}_{\mathrm{Hom}(\bfk,M)}=\text{Id}_{M} $. For $ n=1 $ and $ f=(f_{\alpha})_{\alpha\in\Omega}\in \mathrm{C}_{\Omega}^{1}(A,M) $, we define
\begin{align}\label{defof-Phi-1}
	\Phi^{1}(f)_{\alpha}(a):=f_{\alpha}(R_{\alpha}(a))-T_{\alpha}(f_{\alpha}(a)) , \;\;\text{for all }\alpha\in \Omega,\,a\in A.
\end{align}
For $ n\geq 2 $ and $ f=(f_{\alpha_{1},\dots,
	\alpha_{n}})_{\alpha_{1},\dots,\alpha_{n}\in\Omega} \in \mathrm{C}_{\Omega}^{n}(A,M) $, we define

\begin{equation}\label{defof-Phi}
	\begin{split}
		&\quad\Phi^{n}(f)_{\alpha_{1},\dots,\alpha_{n}}(a_{1},\dots,a_{n})\\
		&:=f_{\alpha_{1},\dots,\alpha_{n}}(R_{\alpha_{1}}(a_{1}),\dots,R_{\alpha_{n}}(a_{n}))-\sum_{k=0}^{n-1} \lambda^{n-k-1}\underset{1\leq i_{1}<i_{2}<\dots<i_{k}\leq n}{\sum}T_{\alpha_{1}\dots\alpha_{n}}\circ f_{\alpha_{1},\dots,\alpha_{n}}\\
		&(a_{1},\dots,a_{i_{1}-1},R_{\alpha_{i_{1}}}(a_{i_{1}}),a_{i_{1}+1},\dots,a_{i_{2}-1},R_{\alpha_{i_{2}}}(a_{i_{2}}),a_{i_{2}+1},\dots,a_{i_{k}-1},R_{\alpha_{i_{k}}}(a_{i_{k}}),a_{i_{k}+1},\dots,a_{n}),
	\end{split}
\end{equation}
for all $ a_{1},\dots,a_{n}\in A,\; \alpha_{1},\dots,\alpha_{n}\in \Omega$.

Similar to~\cite[Proposition III.5]{RBFassoconformal}, we get $ \partial^n\circ\Phi^{n}=\Phi^{n+1}\circ\delta_{\mathrm{Alg}}^n $, i.e. the map $ \Phi^{\bullet} : \mathrm{C}_{\Omega}^{\bullet}(A,M) \rightarrow \mathrm{C}_{\mathrm{RBF}_{\lambda}}^{\bullet}(A,M) $ is a chain map.

\begin{defn}\label{diffof-RBFbihomOassoalg}
	Let $ (M,\rhd_{\alpha,\,\beta},\lhd_{\alpha,\,\beta},T_{\omega},p_{\omega}^{M},q_{\omega}^{M})_{\alpha,\,\beta,\,\omega\in \Omega} $ be a Rota-Baxter family BiHom-$\Omega$-bimodule over the Rota-Baxter family BiHom-$\Omega$-associative algebra $(A,\cdot_{\alpha,\,\beta},R_{\omega},p_{\omega},q_{\omega})_{\alpha,\,\beta,\,\omega\in \Omega}$. We call $ (\mathrm{C}_{\mathrm{RBFA}_{\lambda}}^{\bullet}(A,M) , d^{\bullet}) $ the \textbf{cochain complex of Rota-Baxter family BiHom-$\Omega$-associative algebra $ A$ with coefficients in $ M$}, where
	\[\mathrm{C}_{\mathrm{RBFA}_{\lambda}}^{0}(A,M)=\mathrm{C}_{\Omega}^{0}(A,M),\]
	\[ \mathrm{C}_{\mathrm{RBFA}_{\lambda}}^{n}(A,M)=\mathrm{C}_{\Omega}^{n}(A,M)\oplus\mathrm{C}_{\mathrm{RBF}_{\lambda}}^{n-1}(A,M),\quad \text{for all } n\geq 1,\]
	and the differential $ d^{n}: \mathrm{C}_{\mathrm{RBFA}_{\lambda}}^{n}(A,M)\rightarrow\mathrm{C}_{\mathrm{RBFA}_{\lambda}}^{n+1}(A,M) $ is given by
	\[d^{n}(f,g)_{\alpha_{1},\dots,\,\alpha_{n+1},\,\beta_{1},\dots,\,\beta_{n}}=(\delta_{\mathrm{Alg}}^{n}(f)_{\alpha_{1},\dots,\alpha_{n+1}},-\partial^{n-1}(g)_{\beta_{1},\dots,\,\beta_{n}}-\Phi^{n}(f)_{\beta_{1},\dots,\,\beta_{n}})\]
	for any $ f\in \mathrm{C}_{\Omega}^{n}(A,M),\; g\in \mathrm{C}_{\mathrm{RBF}_{\lambda}}^{n-1}(A,M)$ and $\alpha_{1},\dots,\alpha_{n+1},\,\beta_{1},\dots,\beta_{n}\in \Omega$. Its cohomology, 
	denoted by $ \mathrm{H}_{\mathrm{RBFA}_{\lambda}}^{\bullet}(A,M) $,
	is called the \textbf{cohomology of Rota-Baxter family BiHom-$\Omega$-associative algebra $ A $ with coefficients in $M$}.
\end{defn}

In particular, when $M$ is the Rota-Baxter family BiHom-$\Omega$-bimodule, the cochain complex $(\mathrm{C}^{\bullet}_{\mathrm{RBFA}_{\lambda}}(A,A),d^{\bullet})$ is simply denoted by $(\mathrm{C}_{\mathrm{RBFA}_{\lambda}}^{\bullet}(A),d^{\bullet})$. The corresponding cohomology, simply denoted by $ \mathrm{H}_{\mathrm{RBFA}_{\lambda}}^{\bullet}(A)$, is called the cohomology of Rota-Baxter family BiHom-$\Omega$-associative algebra $A$.

\begin{remark}\label{RBFA-2-cocycle}
A pair $(f_{\alpha_{1},\,\alpha_{2}},h_{\beta_{1}})_{\alpha_{1},\,\alpha_{2},\,\beta_{1}\in\Omega}$ is called a 2-cocycle in $\mathrm{C}_{\mathrm{RBFA}_{\lambda}}^{2}(A,M)$ if $(f_{\alpha_{1},\,\alpha_{2}})_{\alpha_{1},\,\alpha_{2}\in\Omega}\in\mathrm{C}_{\Omega}^{2}(A,M)$ and $(h_{\beta_{1}})_{\beta_{1}\in\Omega}\in\mathrm{C}_{\Omega}^{1}(A,M)$ satisfy
\[d^{2}(f,h)_{\alpha_{1},\,\alpha_{2},\,\alpha_{3},\,\beta_{1},\,\beta_{2}}=0,\]
	i.e. $\delta_{\mathrm{Alg}}^{2}(f)_{\alpha_{1},\,\alpha_{2},\,\alpha_{3}}=0$ and $-\partial^{1}(h)_{\beta_{1},\,\beta_{2}}=\Phi^{2}(f)_{\beta_{1},\,\beta_{2}}$, for all $\alpha_{1},\,\alpha_{2},\,\alpha_{3},\,\beta_{1},\,\beta_{2}\in\Omega$.
\end{remark}

\section{Deformations of Rota-Baxter family BiHom-$\Omega$-associative algebras}\label{sec4}
In this section, we will study the deformations of BiHom-$\Omega$-associative algebras and Rota-Baxter family BiHom-$\Omega$-associative algebras.

\subsection{Deformations of BiHom-$\Omega$-associative algebras}
In this subsection, we study linear deformations of BiHom-$\Omega$-associative algebras. The results of this section are similar to classical ones about deformation of associative algebras \cite{assoalgdeforma}.

\begin{defn}
	A \textbf{linear deformation of BiHom-$\Omega$-associative algebra} $(A,\mu_{\alpha,\,\beta},p_{\omega},q_{\omega})_{\alpha,\,\beta,\,\omega\in \Omega}$ is a parametrized sum $ \mu_{\alpha,\,\beta}^{t}=\mu_{\alpha,\,\beta}+t\mu_{\alpha,\,\beta}^{1}$ consisting of the multiplication $(\mu_{\alpha,\,\beta})_{\alpha,\,\beta\in \Omega}$ and a family of bilinear maps $(\mu_{\alpha,\,\beta}^1)_{\alpha,\,\beta\in\Omega} : A\otimes A\rightarrow A$ such that
	$(A[[t]]/(t^{2}),\mu_{\alpha,\,\beta}^t,p_{\omega},q_{\omega})_{\alpha,\,\beta,\,\omega\in \Omega}$ is a BiHom-$\Omega$-associative algebra. In this case, we say that $(\mu_{\alpha,\,\beta}^1)_{\alpha,\,\beta  \in \Omega}$ is a family of deformations of the BiHom-$\Omega$-associative algebra $A$. 
\end{defn}

Therefore, for a linear deformation $ \mu_{\alpha,\,\beta}^{t}=\mu_{\alpha,\,\beta}+t\mu_{\alpha,\,\beta}^{1}$, we must have
\[p_{\alpha\,\beta}\circ\mu_{\alpha,\,\beta}^t(a,b)=\mu_{\alpha,\,\beta}^t(p_{\alpha}(a),p_{\beta}(b)),\quad q_{\alpha\,\beta}\circ\mu_{\alpha,\,\beta}^t(a,b)=\mu_{\alpha,\,\beta}^t(q_{\alpha}(a),q_{\beta}(b)),\]
\[\mu_{\alpha\,\beta,\,\gamma}^t\big(\mu_{\alpha,\,\beta}^t(a,b),q_{\gamma}(c)\big)=\mu_{\alpha,\,\beta\,\gamma}^t\big(p_{\alpha}(a),\mu_{\beta,\,\gamma}^t(b,c)\big),\]
for all $a,b,c\in A,\,\alpha,\,\beta,\,\gamma\in \Omega$. By equating the coefficients of $t$ and $t^2$, we get
\begin{align}
	p_{\alpha\,\beta}\circ\mu_{\alpha,\,\beta}^1(a,b)=\mu_{\alpha,\,\beta}^1(p_{\alpha}(a),p_{\beta}(b)),&\;\; q_{\alpha\,\beta}\circ\mu_{\alpha,\,\beta}^1(a,b)=\mu_{\alpha,\,\beta}^1(q_{\alpha}(a),q_{\beta}(b)),\label{linear-deform-1}\\
	\mu_{\alpha\,\beta,\,\gamma}(\mu_{\alpha,\,\beta}^1(a,b),\,q_{\gamma}(c))+\mu_{\alpha\,\beta,\,\gamma}^1(\mu_{\alpha,\,\beta}(a,b),q_{\gamma}(c))&=\mu_{\alpha,\,\beta\,\gamma}(p_{\alpha}(a),\,\mu_{\beta,\,\gamma}^1(b,c))\nonumber\\
	&\;\;+\mu_{\alpha,\,\beta\,\gamma}^1(p_{\alpha}(a),\mu_{\beta,\,\gamma}(b,c)),\label{linear-deform-2}\\
	\mu_{\alpha\,\beta,\,\gamma}^1\big(\mu_{\alpha,\,\beta}^1(a,b),q_{\gamma}(c)\big)&=\mu_{\alpha,\,\beta\,\gamma}^1\big(p_{\alpha}(a),\mu_{\beta,\,\gamma}^1(b,c)\big),\label{linear-deform-3}
\end{align}
Hence, by comparing Eqs.~(\ref{Hoc1})-(\ref{cocycle}) and Eqs.~(\ref{linear-deform-1})-(\ref{linear-deform-2}), we obtain that the family of deformations $(\mu^1_{\alpha,\,\beta})_{\alpha,\,\beta\in \Omega}$ is a 2-cocycle in $\mathrm{C}_{\Omega}^{2}(A)$. Moreover, by Eq.~(\ref{linear-deform-1}) and Eq.~(\ref{linear-deform-3}), we know that $(A,\mu_{\alpha,\,\beta}^1,p_{\omega},q_{\omega})_{\alpha,\,\beta,\,\omega\in \Omega}$ is a BiHom-$\Omega$-associative algebra.

\smallskip
Next, we introduce the definition of trivial deformations.
\begin{defn}
	Let $( N_{\omega})_{\omega\in \Omega} :A\rightarrow A$ be a family of linear maps. A family of deformations $( \mu_{\alpha,\,\beta}^1)_{\alpha,\,\beta\in \Omega}$ is said to be \textbf{trivial} if $(T_{\omega}^t)_{\omega\in\Omega}=(\text{id}+tN_{\omega})_{\omega\in\Omega}$ satisfies
	\begin{gather}
		p_{\alpha}\circ T_{\alpha}^t=T_{\alpha}^t\circ p_{\alpha},\quad q_{\alpha}\circ T_{\alpha}^t=T_{\alpha}^t\circ q_{\alpha},\label{tri-1}\\
		T_{\alpha\,\beta}^t\circ \mu_{\alpha,\,\beta}^t(a,b)=\mu_{\alpha,\,\beta}(T_{\alpha}^t(a),\,T_{\beta}^t(b)),\label{tri-2}
	\end{gather}
	for all $a,b\in A,\,\alpha,\,\beta\in \Omega$.
\end{defn}

\noindent Expanding the both sides of Eq.~(\ref{tri-1}), we have
\[p_{\alpha}\circ T_{\alpha}^t=p_{\alpha}\circ(id+tN_{\alpha})=p_{\alpha}+tp_{\alpha}\circ N_{\alpha},\]
\[T_{\alpha}^t\circ p_{\alpha}=(id+tN_{\alpha})\circ p_{\alpha}=p_{\alpha}+tN_{\alpha}\circ p_{\alpha}.\]
Similarly, we get
\[q_{\alpha}\circ T_{\alpha}^t=q_{\alpha}+tq_{\alpha}\circ N_{\alpha},\quad T_{\alpha}^t\circ q_{\alpha}=q_{\alpha}+tN_{\alpha}\circ q_{\alpha}.\]
For Eq.~(\ref{tri-2}), we have
\begin{align*}
	T_{\alpha\,\beta}^t\circ \mu_{\alpha,\,\beta}^t(a,b)=&(id+tN_{\alpha\,\beta})(\mu_{\alpha,\,\beta}+t\mu_{\alpha,\,\beta}^1)(a,b)\\
	=&\mu_{\alpha,\,\beta}(a,b)+t(\mu_{\alpha,\,\beta}^1(a,b)+N_{\alpha\,\beta}\mu_{\alpha,\,\beta}(a,b))+t^2N_{\alpha\,\beta}\mu_{\alpha,\,\beta}^1(a,b),\\
	\mu_{\alpha,\,\beta}(T_{\alpha}^t(a),\,T_{\beta}^t(b))=&\mu_{\alpha,\,\beta}\big((id+tN_{\alpha})(a),\,(id+tN_{\beta})(b)\big)\\
	=&\mu_{\alpha,\,\beta}\big(a+tN_{\alpha}(a),\,b+tN_{\beta}(b)\big)\\
	=&\mu_{\alpha,\,\beta}(a,b)+t\big(\mu_{\alpha,\,\beta}(a,\,N_{\beta}(b))+\mu_{\alpha,\,\beta}(N_{\alpha}(a),\,b)\big)+t^2\mu_{\alpha,\,\beta}\big(N_{\alpha}(a),\,N_{\beta}(b)\big).
\end{align*}
By comparing the coefficient of $t$ and $t^2$ on both sides of the equations, we obtain that the triviality of deformation is equivalent to the following equations:
\begin{gather}
	N_{\alpha}\circ p_{\alpha}=p_{\alpha}\circ N_{\alpha},\quad N_{\alpha}\circ q_{\alpha}=q_{\alpha}\circ N_{\alpha},\label{tri-3}\\
	\mu_{\alpha,\,\beta}^1(a,b)=\mu_{\alpha,\,\beta}(a,\,N_{\beta}(b))+\mu_{\alpha,\,\beta}(N_{\alpha}(a),\,b)-N_{\alpha\,\beta}\circ\mu_{\alpha,\,\beta}(a,b),\label{tri-4}\\
	N_{\alpha\,\beta}\circ\mu_{\alpha,\,\beta}^1(a,b)=\mu_{\alpha,\,\beta}(N_{\alpha}(a),\,N_{\beta}(b)).\label{tri-5}
\end{gather}
It follows from Eqs.~(\ref{tri-3})-(\ref{tri-5}) that $( N_{\omega})_{\omega\in \Omega}$ must satisfy the following conditions:
\begin{gather}
	N_{\alpha}\circ p_{\alpha}=p_{\alpha}\circ N_{\alpha},\quad N_{\alpha}\circ q_{\alpha}=q_{\alpha}\circ N_{\alpha},\label{Nijf-def-1}\\
	\mu_{\alpha,\,\beta}(N_{\alpha}\otimes N_{\beta})=N_{\alpha\,\beta}\big(\mu_{\alpha,\,\beta}(id\otimes N_{\beta})+\mu_{\alpha,\,\beta}(N_{\alpha}\otimes id)-N_{\alpha\,\beta}\circ\mu_{\alpha,\,\beta}(id\otimes id)\big)\label{Nijf-def-2}.
\end{gather}
We call a family of linear maps $( N_{\omega})_{\omega\in \Omega}: A\rightarrow A$ a Nijenhuis family on BiHom-$\Omega$-associative algebra $(A,\mu_{\alpha,\,\beta},p_{\omega},q_{\omega})_{\alpha,\,\beta,\,\omega\in \Omega}$ if $(N_{\omega})_{\omega\in \Omega}$ satisfies Eqs.~(\ref{Nijf-def-1})-(\ref{Nijf-def-2}), which is a generalization of the classical Nijenhuis operator~\cite{assoalgdeforma,Nijen1,Nijen2}.

\begin{prop}\label{a+b}
	Let $(N_{\omega})_{\omega\in \Omega}$ be a Nijenhuis family on BiHom-$\Omega$-associative algebra $(A,\mu_{\alpha,\,\beta},p_{\omega},\\q_{\omega})_{\alpha,\,\beta,\,\omega\in \Omega}$. If we define the operation on $A$ by
	\begin{align*}
		\mu_{\alpha,\,\beta}^N(a,b):=\mu_{\alpha,\,\beta}(N_{\alpha}(a),\,b)+\mu_{\alpha,\,\beta}(a ,\,N_{\beta}(b))-N_{\alpha\,\beta}\circ\mu_{\alpha,\,\beta}(a,b),
	\end{align*}
	for all $a,b\in A,\,\alpha,\,\beta\in \Omega$. Then
	\begin{enumerate}
		\item \label{prop-Nijf-BHOasso}	the quadruple $(A,\mu_{\alpha,\,\beta}^N,p_{\omega},q_{\omega})_{\alpha,\,\beta,\,\omega\in \Omega}$ is a new BiHom-$\Omega$-associative algebra. Moreover, $( N_{\omega})_{\omega\in \Omega}$ is a BiHom-$\Omega$-associative algebra homomorphism from $(A,\mu_{\alpha,\,\beta}^N,p_{\omega},q_{\omega})_{\alpha,\,\beta,\,\omega\in \Omega}$ to $(A,\mu_{\alpha,\,\beta},p_{\omega},q_{\omega})_{\alpha,\,\beta,\,\omega\in \Omega}$.
	\item \label{th-trideform} the family of linear maps $( \mu_{\alpha,\,\beta}^N)_{\alpha,\,\beta\in \Omega}$ is a trivial deformation of $A$.
\end{enumerate}

\end{prop}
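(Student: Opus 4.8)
The plan is to prove the two parts in turn, extracting first a single identity that carries most of the load. Moving $N_{\alpha\,\beta}$ outside on the right-hand side of the Nijenhuis condition~(\ref{Nijf-def-2}) shows that~(\ref{Nijf-def-2}) is equivalent to
\begin{equation*}
	\mu_{\alpha,\,\beta}\big(N_{\alpha}(a),\,N_{\beta}(b)\big)=N_{\alpha\,\beta}\big(\mu_{\alpha,\,\beta}^N(a,b)\big) \tag{$\ast$}
\end{equation*}
for all $a,b\in A$ and $\alpha,\beta\in\Omega$. I would record $(\ast)$ at the outset, because it is precisely the multiplicativity condition~(\ref{RBfBHOalghomo}) for the family $(N_\omega)_{\omega\in\Omega}$ regarded as a map $(A,\mu_{\alpha,\,\beta}^N,p_\omega,q_\omega)\to(A,\mu_{\alpha,\,\beta},p_\omega,q_\omega)$; combined with the commutation relations~(\ref{Nijf-def-1}) it yields immediately the homomorphism assertion of part~\ref{prop-Nijf-BHOasso}.

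For the remaining content of part~\ref{prop-Nijf-BHOasso}, I would first verify the multiplicativity~(\ref{eqalfabeta}) of $\mu^N$: expanding $p_{\alpha\beta}\circ\mu_{\alpha,\,\beta}^N$ summand by summand and pushing $p_{\alpha\beta}$ inward using the multiplicativity of $\mu$ together with the commutation of $N$ with $p$ in~(\ref{Nijf-def-1}) reassembles $\mu_{\alpha,\,\beta}^N(p_\alpha(a),p_\beta(b))$, and similarly for $q$. The main obstacle is the BiHom-$\Omega$-associativity~(\ref{eqasso}) of $\mu^N$. Here the strategy is to expand both $\mu_{\alpha,\,\beta\gamma}^N\big(p_\alpha(a),\mu_{\beta,\,\gamma}^N(b,c)\big)$ and $\mu_{\alpha\beta,\,\gamma}^N\big(\mu_{\alpha,\,\beta}^N(a,b),q_\gamma(c)\big)$ fully, then reconcile the two sides using only the associativity~(\ref{eqasso}) of $\mu$ and the identity~$(\ast)$: every product of two $N$-images is collapsed by~$(\ast)$ into an $N_{\alpha\beta}\circ\mu^N$ term, and every rebracketing is performed by~(\ref{eqasso}), after which the remaining terms cancel in pairs exactly as in the untwisted associative case. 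The delicate point — and the computational heart of the proposition — is keeping track of the structure maps $p$ and $q$, but each move needed is licensed by~(\ref{eqasso}), (\ref{eqalfabeta}) and~(\ref{Nijf-def-1}).

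For part~\ref{th-trideform} I would argue conceptually rather than recompute. Over $A[[t]]/(t^{2})$ the family $T_\omega^t=\id+tN_\omega$ is invertible with inverse $\id-tN_\omega$ and commutes with $p,q$ by~(\ref{Nijf-def-1}); a direct expansion shows that the transported multiplication $(T_{\alpha\beta}^t)^{-1}\circ\mu_{\alpha,\,\beta}\circ(T_\alpha^t\ot T_\beta^t)$ equals $\mu_{\alpha,\,\beta}+t\,\mu_{\alpha,\,\beta}^N$ modulo $t^{2}$. Since transporting a BiHom-$\Omega$-associative structure along an invertible family commuting with $p,q$ again satisfies~(\ref{eqalfabeta})--(\ref{eqasso}) — a short check obtained by conjugating those axioms by $T^t$ — this exhibits $\mu^t=\mu+t\mu^N$ as a linear deformation. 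It is trivial essentially by construction: condition~(\ref{tri-1}) is~(\ref{Nijf-def-1}), while in~(\ref{tri-2}) the coefficient of $t$ is the defining formula of $\mu^N$ and the coefficient of $t^{2}$ is the identity~$(\ast)$ (equivalently~(\ref{tri-5})). Alternatively, once part~\ref{prop-Nijf-BHOasso} has established that $\mu^N$ is itself BiHom-$\Omega$-associative (which gives~(\ref{linear-deform-3})) and that~$(\ast)$ and~(\ref{Nijf-def-1}) hold, one reads off~(\ref{tri-3})--(\ref{tri-5}) directly, which the discussion preceding the proposition already identified as equivalent to triviality.
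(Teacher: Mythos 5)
Your proposal is correct. For part~\ref{prop-Nijf-BHOasso} it follows essentially the paper's own route: you isolate the identity $(\ast)$, namely $\mu_{\alpha,\,\beta}(N_{\alpha}(a),N_{\beta}(b))=N_{\alpha\,\beta}(\mu^{N}_{\alpha,\,\beta}(a,b))$, which is just Eq.~(\ref{Nijf-def-2}) rewritten and is exactly the homomorphism condition~(\ref{RBfBHOalghomo}) together with~(\ref{Nijf-def-1}); you then verify multiplicativity directly and prove BiHom-$\Omega$-associativity of $\mu^{N}$ by full expansion, collapsing the $N$--$N$ products with $(\ast)$ and rebracketing with Eq.~(\ref{eqasso}) --- precisely the computation the paper carries out. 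For part~\ref{th-trideform} your primary route is genuinely different: the paper establishes Eq.~(\ref{linear-deform-2}) by observing that $\mu^{N}=\delta^{1}_{\mathrm{Alg}}(N)$ is a coboundary, so $\delta^{2}_{\mathrm{Alg}}(\mu^{N})=\delta^{2}_{\mathrm{Alg}}\delta^{1}_{\mathrm{Alg}}(N)=0$, and then reads off Eqs.~(\ref{tri-3})--(\ref{tri-5}) from part~\ref{prop-Nijf-BHOasso}; you instead exhibit $\mu_{\alpha,\,\beta}+t\mu^{N}_{\alpha,\,\beta}$ as the conjugate $(T^{t}_{\alpha\,\beta})^{-1}\circ\mu_{\alpha,\,\beta}\circ(T^{t}_{\alpha}\otimes T^{t}_{\beta})$ of $\mu$ by the invertible family $T^{t}_{\omega}=\mathrm{id}+tN_{\omega}$, which makes the deformation property and its triviality visible simultaneously, since Eq.~(\ref{tri-2}) is exactly this conjugation relation. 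The one point requiring care is that the mod-$t^{2}$ transport argument by itself does not yield the $t^{2}$-coefficient identities~(\ref{linear-deform-3}) and~(\ref{tri-5}), which the paper's formulation of linear deformation and triviality formally demands; you close this correctly by invoking part~\ref{prop-Nijf-BHOasso} for~(\ref{linear-deform-3}) and noting that~(\ref{tri-5}) is exactly $(\ast)$, i.e.\ the Nijenhuis condition. Your conjugation argument buys conceptual transparency (triviality is built into the construction), while the paper's coboundary observation buys a cleaner cohomological explanation of why Eq.~(\ref{linear-deform-2}) holds.
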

\begin{proof}
	\ref{prop-Nijf-BHOasso}. For any $a,b,c\in A,\,\alpha,\,\beta,\,\gamma\in \Omega,$ we first prove Eq.~(\ref{eqalfabeta}) for $(A,\mu_{\alpha,\,\beta}^{N},p_{\omega},q_{\omega})_{\alpha,\,\beta,\,\gamma\in\Omega}$. 
	\begin{align*}
		p_{\alpha\,\beta}\circ \mu_{\alpha,\,\beta}^N(a,b)=&p_{\alpha\,\beta}\big(\mu_{\alpha,\,\beta}(N_{\alpha}(a),\,b)+\mu_{\alpha,\,\beta}(a,\,N_{\beta}(b))-N_{\alpha\,\beta}\circ\mu_{\alpha,\,\beta}(a,b)\big)\\
		=&\mu_{\alpha,\,\beta}(p_{\alpha}N_{\alpha}(a),\,p_{\beta}(b))+\mu_{\alpha,\,\beta}(p_{\alpha}(a),\,p_{\beta}N_{\beta}(b))-p_{\alpha\,\beta}N_{\alpha\,\beta}\mu_{\alpha,\,\beta}(a,b)\\
		=&\mu_{\alpha,\,\beta}(N_{\alpha}p_{\alpha}(a),\,p_{\beta}(b))+\mu_{\alpha,\,\beta}(p_{\alpha}(a),\,N_{\beta}p_{\beta}(b))-N_{\alpha\,\beta}p_{\alpha\,\beta}\mu_{\alpha,\,\beta}(a,b)\\
		&\hspace{1cm}(\text{by Eq.~(\ref{Nijf-def-1})})\\
		=&\mu_{\alpha,\,\beta}(N_{\alpha}p_{\alpha}(a),\,p_{\beta}(b))+\mu_{\alpha,\,\beta}(p_{\alpha}(a),\,N_{\beta}p_{\beta}(b))-N_{\alpha\,\beta}\mu_{\alpha,\,\beta}(p_{\alpha}(a),\,p_{\beta}(b))\\
		&\hspace{1cm}(\text{by Eq.~(\ref{eqalfabeta})})\\
		=&\mu_{\alpha,\,\beta}^N(p_{\alpha}(a),p_{\beta}(b)).
	\end{align*}
	Similarly, we get $ q_{\alpha\,\beta}\circ \mu_{\alpha,\,\beta}^N(a,b)=\mu_{\alpha,\,\beta}^N(q_{\alpha}(a),q_{\beta}(b)).$ Next, we prove Eq.~(\ref{eqasso}).
\begin{align*}
	&\mu_{\alpha\,\beta,\,\gamma}^{N}\big(\mu_{\alpha,\,\beta}^{N}(a,b),q_{\gamma}(c)\big)\\
	=&\mu_{\alpha\,\beta,\,\gamma}\big(N_{\alpha\,\beta}\mu_{\alpha,\,\beta}^{N}(a,b),q_{\gamma}(c)\big)+\mu_{\alpha\,\beta,\,\gamma}\big(\mu_{\alpha,\,\beta}^{N}(a,b),N_{\gamma}q_{\gamma}(c)\big)-N_{\alpha\,\beta\,\gamma}\mu_{\alpha\,\beta,\,\gamma}\big(\mu_{\alpha,\,\beta}^{N}(a,b),q_{\gamma}(c)\big)\\
	=&\mu_{\alpha\,\beta,\,\gamma}\Big(\mu_{\alpha,\,\beta}\big(N_{\alpha}(a),N_{\beta}(b)\big),q_{\gamma}(c)\Big)+\mu_{\alpha\,\beta,\,\gamma}\Big(\mu_{\alpha,\,\beta}\big(N_{\alpha}(a),b\big)+\mu_{\alpha,\,\beta}\big(a,N_{\beta}(b)\big)-N_{\alpha\,\beta}\mu_{\alpha,\,\beta}(a,b),q_{\gamma}N_{\gamma}(c)\Big)\\
	&-\mu_{\alpha\,\beta,\,\gamma}\big(N_{\alpha\,\beta}\mu_{\alpha,\,\beta}^{N}(a,b),N_{\gamma}q_{\gamma}(c)\big)\\
	=&\mu_{\alpha\,\beta,\,\gamma}\Big(\mu_{\alpha,\,\beta}\big(N_{\alpha}(a),N_{\beta}(b)\big),q_{\gamma}(c)\Big)+\mu_{\alpha\,\beta,\,\gamma}\Big(\mu_{\alpha,\,\beta}\big(N_{\alpha}(a),b\big),q_{\gamma}N_{\gamma}(c)\Big)+\mu_{\alpha\,\beta,\,\gamma}\Big(\mu_{\alpha,\,\beta}\big(a,N_{\beta}(b)\big),q_{\gamma}N_{\gamma}(c)\Big)\\
	&-\mu_{\alpha\,\beta,\,\gamma}\Big(\mu_{\alpha,\,\beta}\big(N_{\alpha}(a),N_{\beta}(b)\big),q_{\gamma}N_{\gamma}(c)\Big)-\mu_{\alpha\,\beta,\,\gamma}\Big(\mu_{\alpha,\,\beta}\big(N_{\alpha}(a),N_{\beta}(b)\big),q_{\gamma}N_{\gamma}(c)\Big)\\
	=&\mu_{\alpha,\,\beta\,\gamma}\Big(p_{\alpha}N_{\alpha}(a),\mu_{\beta,\,\gamma}\big(N_{\beta}(b),c\big)\Big)+\mu_{\alpha,\,\beta\,\gamma}\Big(p_{\alpha}N_{\alpha}(a),\mu_{\beta,\,\gamma}\big(b,N_{\gamma}(c)\big)\Big)+\mu_{\alpha,\,\beta\,\gamma}\Big(p_{\alpha}(a),\mu_{\beta,\,\gamma}\big(N_{\beta}(b),N_{\gamma}(c)\big)\Big)\\
	&-\mu_{\alpha,\,\beta\,\gamma}\Big(p_{\alpha}N_{\alpha}(a),\mu_{\beta,\,\gamma}\big(N_{\beta}(b),N_{\gamma}(c)\big)\Big)-\mu_{\alpha,\,\beta\,\gamma}\Big(p_{\alpha}N_{\alpha}(a),\mu_{\beta,\,\gamma}\big(N_{\beta}(b),N_{\gamma}(c)\big)\Big)\\
	=&\mu_{\alpha,\,\beta\,\gamma}\Big(N_{\alpha}p_{\alpha}(a),\mu_{\beta,\,\gamma}\big(N_{\beta}(b),c\big)+\mu_{\beta,\,\gamma}\big(b,N_{\gamma}(c)\big)-\mu_{\beta,\,\gamma}\big(N_{\beta}(b),N_{\gamma}(c)\big)\Big)+\mu_{\alpha,\,\beta\,\gamma}\big(p_{\alpha}(a),N_{\beta\,\gamma}\mu_{\beta,\,\gamma}(b,c)\big)\\
	&-N_{\alpha\,\beta\,\gamma}\mu_{\alpha,\,\beta\,\gamma}\big(p_{\alpha}(a),\mu_{\beta,\,\gamma}^{N}(b,c)\big)\\
	=&\mu_{\alpha,\,\beta\,\gamma}\big(N_{\alpha}p_{\alpha}(a),\mu_{\beta,\,\gamma}^{N}(b,c)\big)+\mu_{\alpha,\,\beta\,\gamma}\big(p_{\alpha}(a),N_{\beta\,\gamma}\mu_{\beta,\,\gamma}^{N}(b,c)\big)-N_{\alpha\,\beta\,\gamma}\mu_{\alpha,\,\beta\,\gamma}\big(p_{\alpha}(a),\mu_{\beta,\,\gamma}^{N}(b,c)\big)\\
	=&\mu_{\alpha,\,\beta\,\gamma}^{N}\big(p_{\alpha}(a),\mu_{\beta,\,\gamma}^{N}(b,c)\big)
\end{align*}
\noindent So we obtain that $(A,\mu_{\alpha,\,\beta}^N,p_{\omega},q_{\omega})_{\alpha,\,\beta,\,\omega\in \Omega}$ is a BiHom-$\Omega$-associative algebra. Furthermore, we have
	\begin{align*}
		\mu_{\alpha,\,\beta}\big(N_{\alpha}(a),\,N_{\beta}(b)\big)=&N_{\alpha\,\beta}\Big(\mu_{\alpha,\,\beta}\big(N_{\alpha}(a),\,b\big)+\mu_{\alpha,\,\beta}\big(a,\,N_{\beta}(b)\big)-N_{\alpha\,\beta}\mu_{\alpha,\,\beta}(a,b)\Big)\hspace{1cm}(\text{by Eq.~(\ref{Nijf-def-2})})\\
		=&N_{\alpha\,\beta}\circ\mu_{\alpha,\,\beta}^N(a,b),
	\end{align*}
	 then by Eq.~(\ref{Nijf-def-1}), we get that $( N_{\omega})_{\omega\in \Omega}$ is a BiHom-$\Omega$-associative algebra homomorphism. This completes the proof.

\ref{th-trideform}. First, we are going to prove that $\mu_{\alpha,\,\beta}+t\mu_{\alpha,\,\beta}^N$ is a linear deformation of $A$. By Item~\ref{prop-Nijf-BHOasso}, we get Eq.~(\ref{linear-deform-1}) and Eq.~(\ref{linear-deform-3}). So we only need to check Eq.~(\ref{linear-deform-2}) for $\mu_{\alpha,\,\beta}^N$, we have
	\begin{align*}
		\mu&_{\alpha,\,\beta\,\gamma}(p_{\alpha}\otimes \mu_{\beta,\,\gamma}^N)-\mu_{\alpha\,\beta,\,\gamma}^N(\mu_{\alpha,\,\beta}\otimes q_{\gamma})+\mu_{\alpha,\,\beta\,\gamma}^N(p_{\alpha}\otimes \mu_{\beta,\,\gamma})-\mu_{\alpha\,\beta,\,\gamma}(\mu_{\alpha,\,\beta}^N\otimes q_{\gamma})\\
		=&\delta_{\mathrm{Alg}}^2(\mu_{\alpha,\,\beta}^N)\hspace{1cm}(\text{by Eq.~(\ref{cocycle})})\\
		=&\delta_{\mathrm{Alg}}^2\delta_{\mathrm{Alg}}^1(N_{\alpha})=0.
	\end{align*}
	So we get Eq.~(\ref{linear-deform-2}). Hence $\mu_{\alpha,\,\beta}+t\mu_{\alpha,\,\beta}^N$ is a linear deformation of $A$. Next, we verify the triviality of $\mu_{\alpha,\,\beta}^N$. We just need to prove Eqs.~(\ref{tri-3})-(\ref{tri-5}). By Item~\ref{prop-Nijf-BHOasso} and the definition of $\mu_{\alpha,\,\beta}^N$, we get Eqs.~(\ref{tri-3})-(\ref{tri-5}). Thus, $( \mu_{\alpha,\,\beta}^N)_{\alpha,\,\beta\in \Omega}$ is a trivial deformation. This completes the proof.
\end{proof}

\begin{remark}
	 By Proposition~\ref{a+b}, we have a 2-cochain $(\psi_{\alpha,\,\beta}^N)_{\alpha,\,\beta\in \Omega}\in \mathrm{C}_{\Omega}^2(A)$ as follows.
	\begin{align}\label{def-psi}
		\psi_{\alpha,\,\beta}^N(a,b)=\mu_{\alpha,\,\beta}\big(N_{\alpha}(a),N_{\beta}(b)\big)-N_{\alpha\,\beta}\mu_{\alpha,\,\beta}^N(a,b),
	\end{align}
	for all $a,b\in A,\,\alpha,\,\beta\in \Omega$. It is obvious that $(\psi_{\alpha,\,\beta}^N)_{\alpha,\,\beta\in\Omega}=0$ if and only if $( N_{\omega})_{\omega\in \Omega}$ is a Nijenhuis family on $A$.
\end{remark}

Now we arrive at our main results in this subsection as follows.
\begin{theorem}
	Let $(A,\mu_{\alpha,\,\beta},p_{\omega},q_{\omega})_{\alpha,\,\beta,\,\omega\in \Omega}$ be a BiHom-$\Omega$-associative algebra. If $(\mu_{\alpha,\,\beta}^{N})_{\alpha,\,\beta\in\Omega}$ is defined by Proposition~\ref{a+b}, then
	\begin{enumerate}
		\item \label{it:bihom-ass}
		the quadruple $(A[[t]]/(t^{2}),\mu_{\alpha,\,\beta}+t\mu_{\alpha,\,\beta}^{N},p_{\omega},q_{\omega})_{\alpha,\,\beta,\,\omega\in \Omega}$ is a BiHom-$\Omega$-associative algebra.
		\item \label{it:2-cocycle}
 the quadruple $(A,\mu_{\alpha,\,\beta}^N,p_{\omega},q_{\omega})_{\alpha,\,\beta,\,\omega\in \Omega}$ is a BiHom-$\Omega$-associative algebra if and only if $( \psi_{\alpha,\,\beta}^N)_{\alpha,\,\beta\in \Omega}$ is a 2-cocycle in $\mathrm{C}_{\Omega}^2(A)$.
	\end{enumerate}
\end{theorem}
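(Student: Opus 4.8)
The plan is to reduce both statements to facts about the coboundary operator $\delta_{\mathrm{Alg}}$ and the BiHom-$\Omega$-Gerstenhaber bracket $[-,-]_{G}^{\Omega}$ on the regular cochain complex $\mathrm{C}_{\Omega}^{\bullet}(A)$. The first thing I would record is the identity $\mu_{\alpha,\,\beta}^{N}=(\delta_{\mathrm{Alg}}^{1}N)_{\alpha,\,\beta}$. Indeed, writing out $\delta_{\mathrm{Alg}}^{1}$ from Eq.~(\ref{BiHomOmHochschdiff}) for the regular bimodule (so $\rhd=\lhd=\mu$ and $p_{\alpha}^{0}=q_{\alpha}^{0}=\mathrm{id}$) gives $(\delta_{\mathrm{Alg}}^{1}N)_{\alpha,\,\beta}(a,b)=\mu_{\alpha,\,\beta}(a,N_{\beta}(b))-N_{\alpha\,\beta}\mu_{\alpha,\,\beta}(a,b)+\mu_{\alpha,\,\beta}(N_{\alpha}(a),b)$, which is exactly the defining formula for $\mu_{\alpha,\,\beta}^{N}$ in Proposition~\ref{a+b}. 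Since $(N_{\omega})_{\omega\in\Omega}$ commutes with $(p_{\omega})_{\omega\in\Omega}$ and $(q_{\omega})_{\omega\in\Omega}$ by Eq.~(\ref{Nijf-def-1}), we have $N\in\mathrm{C}_{\Omega}^{1}(A)$, hence $\mu^{N}=\delta_{\mathrm{Alg}}^{1}N\in\mathrm{C}_{\Omega}^{2}(A)$ is a $2$-coboundary; in particular it satisfies the multiplicativity Eq.~(\ref{Hoc1}) and is a $2$-cocycle, i.e. $\delta_{\mathrm{Alg}}^{2}\mu^{N}=\delta_{\mathrm{Alg}}^{2}\delta_{\mathrm{Alg}}^{1}N=0$.

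For item~\ref{it:bihom-ass}, expanding the multiplicativity and the BiHom-$\Omega$-associativity of $\mu^{t}=\mu+t\mu^{N}$ in powers of $t$ over $A[[t]]/(t^{2})$, the constant ($t^{0}$) terms hold because $\mu$ is itself a BiHom-$\Omega$-associative algebra, while the linear ($t^{1}$) terms are precisely Eqs.~(\ref{linear-deform-1}) and~(\ref{linear-deform-2}) for $\mu^{1}=\mu^{N}$. As recalled in the text, after comparing Eqs.~(\ref{Hoc1})--(\ref{cocycle}) with Eqs.~(\ref{linear-deform-1})--(\ref{linear-deform-2}), these two conditions together say exactly that $\mu^{N}$ is a $2$-cocycle in $\mathrm{C}_{\Omega}^{2}(A)$, which we already established. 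The quadratic ($t^{2}$) term is Eq.~(\ref{linear-deform-3}), which vanishes automatically in $A[[t]]/(t^{2})$. Hence $(A[[t]]/(t^{2}),\mu+t\mu^{N},p_{\omega},q_{\omega})_{\alpha,\,\beta,\,\omega\in\Omega}$ is a BiHom-$\Omega$-associative algebra, with no cocycle hypothesis on $\psi^{N}$ needed.

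For item~\ref{it:2-cocycle}, since $\mu^{N}$ always satisfies Eq.~(\ref{Hoc1}), Proposition~\ref{prop-MC} tells us that $(A,\mu^{N},p_{\omega},q_{\omega})_{\alpha,\,\beta,\,\omega\in\Omega}$ is a BiHom-$\Omega$-associative algebra if and only if $[\mu^{N},\mu^{N}]_{G}^{\Omega}=0$. I would evaluate this bracket using the graded Lie structure of Proposition~\ref{graded-Liealg}. From $\mu^{N}=[\mu,N]_{G}^{\Omega}$, the relation $[\mu,\mu]_{G}^{\Omega}=0$, and $[\mu,\mu^{N}]_{G}^{\Omega}=0$ (which holds because $\delta_{\mathrm{Alg}}^{2}\mu^{N}=\delta_{\mathrm{Alg}}^{2}\delta_{\mathrm{Alg}}^{1}N=0$), the graded Jacobi identity collapses to $[\mu^{N},\mu^{N}]_{G}^{\Omega}=[\mu,[N,\mu^{N}]_{G}^{\Omega}]_{G}^{\Omega}$. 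A short computation with the compositions $\diamond_{i}^{\Omega}$ (all the $p,q$-twists are trivial here because $N$ has arity one) gives $[N,\mu^{N}]_{G}^{\Omega}=-2\psi^{N}-\mu^{N^{2}}$, where $\mu^{N^{2}}=\delta_{\mathrm{Alg}}^{1}(N^{2})$ is the deformed product attached to $N^{2}$. Since $\mu^{N^{2}}$ is again a coboundary we get $[\mu,\mu^{N^{2}}]_{G}^{\Omega}=0$, so $[\mu^{N},\mu^{N}]_{G}^{\Omega}=-2[\mu,\psi^{N}]_{G}^{\Omega}=\pm 2\,\delta_{\mathrm{Alg}}^{2}(\psi^{N})$. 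After checking that $\psi^{N}\in\mathrm{C}_{\Omega}^{2}(A)$ (its multiplicativity follows from that of $\mu$ and $\mu^{N}$ together with Eq.~(\ref{Nijf-def-1})), we conclude that $\mu^{N}$ is BiHom-$\Omega$-associative if and only if $\delta_{\mathrm{Alg}}^{2}(\psi^{N})=0$, i.e. if and only if $\psi^{N}$ is a $2$-cocycle; when $\psi^{N}=0$, i.e. $N$ is Nijenhuis, this recovers Proposition~\ref{a+b}.

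The main obstacle is the bracket computation $[N,\mu^{N}]_{G}^{\Omega}=-2\psi^{N}-\mu^{N^{2}}$ and, in particular, recognizing that the unwanted correction term $\mu^{N^{2}}$ is $\delta_{\mathrm{Alg}}$-exact and therefore annihilated by $[\mu,-]_{G}^{\Omega}$; pinning down the scalar and sign (so that the vanishing of $[\mu^{N},\mu^{N}]_{G}^{\Omega}$ is genuinely \emph{equivalent} to the cocycle condition on $\psi^{N}$, not merely implied by it) is the delicate point. A fully elementary alternative, should one prefer to avoid the bracket, is to expand the associator $\mu_{\alpha\beta,\,\gamma}^{N}(\mu_{\alpha,\,\beta}^{N}(a,b),q_{\gamma}(c))-\mu_{\alpha,\,\beta\gamma}^{N}(p_{\alpha}(a),\mu_{\beta,\,\gamma}^{N}(b,c))$ directly, repeatedly substituting the identity $\mu_{\alpha,\,\beta}(N_{\alpha}(a),N_{\beta}(b))=N_{\alpha\beta}\mu_{\alpha,\,\beta}^{N}(a,b)+\psi_{\alpha,\,\beta}^{N}(a,b)$ coming from Eq.~(\ref{def-psi}), and then matching the outcome term by term with $-\delta_{\mathrm{Alg}}^{2}(\psi^{N})(a,b,c)$; this is routine but long.
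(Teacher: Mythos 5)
Your proposal is correct, and while item \ref{it:bihom-ass} follows essentially the paper's reduction, your treatment of item \ref{it:2-cocycle} is a genuinely different route. For \ref{it:bihom-ass} the paper also expands $\mu+t\mu^{N}$ and isolates the $t^{0}$, $t^{1}$, $t^{2}$ coefficients, but it verifies the $t^{1}$ condition (Eq.~(\ref{asso-eq2})) by a long direct substitution and disposes of the $t^{2}$ condition by invoking Proposition~\ref{a+b}~\ref{prop-Nijf-BHOasso} (which presupposes the Nijenhuis identity); your observation that $\mu^{N}=\delta_{\mathrm{Alg}}^{1}N$ kills the $t^{1}$ term via $\delta_{\mathrm{Alg}}^{2}\delta_{\mathrm{Alg}}^{1}=0$ --- a trick the paper itself uses only inside the proof of Proposition~\ref{a+b}~\ref{th-trideform} --- and that the $t^{2}$ term dies in $A[[t]]/(t^{2})$ is cleaner and, importantly, does not require $N$ to be Nijenhuis, which is the reading under which item \ref{it:2-cocycle} is non-vacuous. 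For \ref{it:2-cocycle} the paper establishes the pointwise identity $\mu^{N}_{\alpha\beta,\gamma}(\mu^{N}_{\alpha,\beta}(a,b),q_{\gamma}(c))-\mu^{N}_{\alpha,\beta\gamma}(p_{\alpha}(a),\mu^{N}_{\beta,\gamma}(b,c))=(\delta_{\mathrm{Alg}}^{2}\psi^{N})_{\alpha,\beta,\gamma}(a,b,c)$ by the ``routine but long'' expansion you mention as an alternative, whereas you derive $[\mu^{N},\mu^{N}]_{G}^{\Omega}=[\mu,[N,\mu^{N}]_{G}^{\Omega}]_{G}^{\Omega}=2\,\delta_{\mathrm{Alg}}^{2}(\psi^{N})$ from the graded Jacobi identity together with $[N,\mu^{N}]_{G}^{\Omega}=-2\psi^{N}-\delta_{\mathrm{Alg}}^{1}(N^{2})$; I checked this bracket computation and it is right (note you must use the intended Gerstenhaber bracket, with the second sum in Definition~\ref{def-bracket} running over the arity of the \emph{second} argument --- the formula as literally printed would omit $\mu^{N}\diamond_{2}^{\Omega}N$). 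Your route is shorter and more structural, but it buys the equivalence only up to a factor of $2$: over the general base ring $\bfk$ of the paper one needs $2$ to be invertible (or a non-zero-divisor) to pass from $2\,\delta_{\mathrm{Alg}}^{2}(\psi^{N})=0$ to $\delta_{\mathrm{Alg}}^{2}(\psi^{N})=0$, and the same caveat is already hidden in Proposition~\ref{prop-MC}, which you invoke; the paper's direct identity is coefficient-free and avoids this entirely. Both arguments correctly reduce the claim to checking $\psi^{N}\in\mathrm{C}_{\Omega}^{2}(A)$ and $\mu^{N}\in\mathrm{C}_{\Omega}^{2}(A)$, which you do.
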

\begin{proof}
	\ref{it:bihom-ass}. For any $a,b,c\in A,\,\alpha,\,\beta,\,\gamma\in \Omega$, we only need to verify that the multiplication $\mu_{\alpha,\,\beta}+t\mu_{\alpha,\,\beta}^N$ satisfy Eqs.~(\ref{eqalfabeta})-(\ref{eqasso}). First of all, by Eq.~(\ref{eqalfabeta}) and Proposition~\ref{a+b}~\ref{prop-Nijf-BHOasso}, then we have
	\[p_{\alpha\,\beta}\circ (\mu_{\alpha,\,\beta}+t\mu_{\alpha,\,\beta}^N)(a,b)=(\mu_{\alpha,\,\beta}+t\mu_{\alpha,\,\beta}^N)(p_{\alpha}(a),p_{\beta}(b)),\]
	\[q_{\alpha\,\beta}\circ (\mu_{\alpha,\,\beta}+t\mu_{\alpha,\,\beta}^N)(a,b)=(\mu_{\alpha,\,\beta}+t\mu_{\alpha,\,\beta}^N)(q_{\alpha}(a),q_{\beta}(b)).\]
	Next, for the BiHom-$\Omega$-associativity of $\mu_{\alpha,\,\beta}+t\mu_{\alpha,\,\beta}^N$, we have
	\[(\mu_{\alpha\,\beta,\,\gamma}+t\mu_{\alpha\,\beta,\,\gamma}^N)\big((\mu_{\alpha,\,\beta}
+t\mu_{\alpha,\,\beta}^N)(a,b),q_{\gamma}(c)\big)=(\mu_{\alpha,\,\beta\,\gamma}
+t\mu_{\alpha,\,\beta\,\gamma}^N)\big(p_{\alpha}(a),(\mu_{\beta,\,\gamma}+t\mu_{\beta,\,\gamma}^N)(b,c)\big),\]
	which is equivalent to
\begin{align}
		&\mu_{\alpha\,\beta,\,\gamma}\big(\mu_{\alpha,\,\beta}(a,b),\,q_{\gamma}(c)\big)= \mu_{\alpha,\,\beta\,\gamma}\big(p_{\alpha}(a),\,\mu_{\beta,\,\gamma}(b,c)\big),\label{asso-eq1}\\
		&\mu_{\alpha\,\beta,\,\gamma}\big(\mu_{\alpha,\,\beta}^N(a,b),\,q_{\gamma}(c)\big)+\mu_{\alpha\,\beta,\,\gamma}^N
\big(\mu_{\alpha,\,\beta}(a,b),q_{\gamma}(c)\big)\nonumber\\
		& \ \hspace{3.7cm}= \mu_{\alpha,\,\beta\,\gamma}\big(p_{\alpha}(a),\,\mu_{\beta,\,\gamma}^N(b,c)\big)+\mu_{\alpha,\,\beta\,\gamma}^N\big(p_{\alpha}(a),\mu_{\beta,\,\gamma}(b,c)\big),\label{asso-eq2}\\
		& \ \mu_{\alpha\,\beta,\,\gamma}^N\big(\mu_{\alpha,\,\beta}^N(a,b),q_{\gamma}(c)\big)= \mu_{\alpha,\,\beta\,\gamma}^N\big(p_{\alpha}(a),\mu_{\beta,\,\gamma}^N(b,c)\big).\label{asso-eq3}
	\end{align}
	From Eq.~(\ref{eqasso}) and Proposition~\ref{a+b}~\ref{prop-Nijf-BHOasso}, we know that Eq.~(\ref{asso-eq1}) and Eq.~(\ref{asso-eq3}) are true. So now we only need to prove Eq.~(\ref{asso-eq2}), we have
	\begin{align*}
		&\mu_{\alpha\,\beta,\,\gamma}\big(\mu_{\alpha,\,\beta}^N(a,b),\,q_{\gamma}(c)\big)+\mu_{\alpha\,\beta,\,\gamma}^N\big(\mu_{\alpha,\,\beta}(a,b),q_{\gamma}(c)\big)\\
		=&\mu_{\alpha\,\beta,\,\gamma}\Big(\mu_{\alpha,\,\beta}\big(N_{\alpha}(a),\,b\big)+\mu_{\alpha,\,\beta}\big(a,\,N_{\beta}(b)\big)-N_{\alpha\,\beta}\mu_{\alpha,\,\beta}(a,b),\,q_{\gamma}(c)\Big)+\mu_{\alpha\,\beta,\,\gamma}\big(N_{\alpha\,\beta}\mu_{\alpha,\,\beta}(a,b),\,q_{\gamma}(c)\big)\\
		&+\mu_{\alpha\,\beta,\,\gamma}\big(\mu_{\alpha,\,\beta}(a,b),\,N_{\gamma}q_{\gamma}(c)\big)-N_{\alpha\,\beta\,\gamma}\mu_{\alpha\,\beta,\,\gamma}\big(\mu_{\alpha,\,\beta}(a,b),\,q_{\gamma}(c)\big)\\
		=&\mu_{\alpha\,\beta,\,\gamma}\big(\mu_{\alpha,\,\beta}(N_{\alpha}(a),\,b),\,q_{\gamma}(c)\big)+\mu_{\alpha\,\beta,\,\gamma}\Big(\mu_{\alpha,\,\beta}\big(a,\,N_{\beta}(b)\big),\,q_{\gamma}(c)\Big)-\mu_{\alpha\,\beta,\,\gamma}\big(N_{\alpha\,\beta}\mu_{\alpha,\,\beta}(a,b),\,q_{\gamma}(c)\big)\\
		&+\mu_{\alpha\,\beta,\,\gamma}\big(N_{\alpha\,\beta}\mu_{\alpha,\,\beta}(a,b),\,q_{\gamma}(c)\big)+\mu_{\alpha\,\beta,\,\gamma}\big(\mu_{\alpha,\,\beta}(a,b),\,q_{\gamma}N_{\gamma}(c)\big)-N_{\alpha\,\beta\,\gamma}\mu_{\alpha\,\beta,\,\gamma}\big(\mu_{\alpha,\,\beta}(a,b),\,q_{\gamma}(c)\big)\\
		&\hspace{1cm}(\text{by Eq.~(\ref{Nijf-def-1})})\\
		=&\mu_{\alpha,\,\beta\,\gamma}\big(p_{\alpha}N_{\alpha}(a),\,\mu_{\beta,\,\gamma}(b,c)\big)+\mu_{\alpha,\,\beta\,\gamma}\Big(p_{\alpha}(a),\,\mu_{\beta,\,\gamma}\big(N_{\beta}(b),\,c\big)\Big)+\mu_{\alpha,\,\beta\,\gamma}\Big(p_{\alpha}(a),\,\mu_{\beta,\,\gamma}\big(b,\,N_{\gamma}(c)\big)\Big)\\
		&-N_{\alpha\,\beta\,\gamma}\mu_{\alpha,\,\beta\,\gamma}\big(p_{\alpha}(a),\,\mu_{\beta,\,\gamma}(b,c)\big).\hspace{1cm}(\text{by Eq.~(\ref{eqasso})})\\
		=&\mu_{\alpha,\,\beta\,\gamma}\Big(p_{\alpha}(a),\,\mu_{\beta,\,\gamma}\big(N_{\beta}(b),\,c\big)+\mu_{\beta,\,\gamma}\big(b,\,N_{\gamma}(c)\big)-N_{\beta\,\gamma}\mu_{\beta,\,\gamma}(b,c)\Big)+\mu_{\alpha,\,\beta\,\gamma}\big(N_{\alpha}p_{\alpha}(a),\,\mu_{\beta,\,\gamma}(b,c)\big)\\
		&+\mu_{\alpha,\,\beta\,\gamma}\big(p_{\alpha}(a),\,N_{\beta\,\gamma}\mu_{\beta,\,\gamma}(b,c)\big)-N_{\alpha\,\beta\,\gamma}\mu_{\alpha,\,\beta\,\gamma}\big(p_{\alpha}(a),\,\mu_{\beta,\,\gamma}(b,c)\big)\hspace{1cm}(\text{by Eq.~(\ref{Nijf-def-1})})\\
		=&\mu_{\alpha,\,\beta\,\gamma}\big(p_{\alpha}(a),\,\mu_{\beta,\,\gamma}^N(b,c)\big)+\mu_{\alpha,\,\beta\,\gamma}^N
\big(p_{\alpha}(a),\mu_{\beta,\,\gamma}(b,c)\big).
	\end{align*}
Thus, $(A[[t]]/(t^{2}),\mu_{\alpha,\,\beta}+t\mu_{\alpha,\,\beta}^{N},p_{\omega},q_{\omega})_{\alpha,\,\beta,\,\omega\in \Omega}$ is a BiHom-$\Omega$-associative algebra.	

\ref{it:2-cocycle}. By Definition~\ref{def-BHO} and Remark~\ref{2-cocycle}, we only need to check the following equation:
		\[(\delta_{\mathrm{Alg}}^2 \psi^N)_{\alpha,\,\beta,\,\gamma}(a,b,c)=\mu_{\alpha\,\beta,\,\gamma}^N
\big(\mu_{\alpha,\,\beta}^N(a,b),q_{\gamma}(c)\big)-\mu_{\alpha,\,\beta\,\gamma}^N
\big(p_{\alpha}(a),\mu_{\beta,\,\gamma}^N(b,c)\big),\]
		for all $a,b,c\in A,\,\alpha,\,\beta,\,\gamma\in \Omega$. Then we have
		\begin{align*}
			\mu&_{\alpha\,\beta,\,\gamma}^N\big(\mu_{\alpha,\,\beta}^N(a,b),q_{\gamma}(c)\big)-\mu_{\alpha,\,\beta\,\gamma}^N\big(p_{\alpha}(a),\mu_{\beta,\,\gamma}^N(b,c)\big)\\
			=&\mu_{\alpha\,\beta,\,\gamma}\Big(N_{\alpha\,\beta}\mu_{\alpha,\,\beta}^N(a,b),\,q_{\gamma}(c)\Big)+\mu_{\alpha\,\beta,\,\gamma}\Big(\mu_{\alpha,\,\beta}^N(a,b),\,N_{\gamma}q_{\gamma}(c)\Big)-N_{\alpha\,\beta\,\gamma}\mu_{\alpha\,\beta,\,\gamma}\Big(\mu_{\alpha,\,\beta}^N(a,b),\,q_{\gamma}(c)\Big)\\
			&-\mu_{\alpha,\,\beta\,\gamma}\Big(N_{\alpha}p_{\alpha}(a),\,\mu_{\beta,\,\gamma}^N(b,c)\Big)-\mu_{\alpha,\,\beta\,\gamma}\Big(p_{\alpha}(a),\,N_{\beta\,\gamma}\mu_{\beta,\,\gamma}^N(b,c)\Big)+N_{\alpha\,\beta\,\gamma}\mu_{\alpha,\,\beta\,\gamma}\Big(p_{\alpha}(a),\,\mu_{\beta,\,\gamma}^N(b,c)\Big)\\
			=&\mu_{\alpha\,\beta,\,\gamma}\Big(N_{\alpha\,\beta}\mu_{\alpha,\,\beta}^N(a,b),\,q_{\gamma}(c)\Big)+\mu_{\alpha\,\beta,\,\gamma}\Big(\mu_{\alpha,\,\beta}\big(N_{\alpha}(a),\,b\big)+\mu_{\alpha,\,\beta}\big(a,\,N_{\beta}(b)\big)-N_{\alpha\,\beta}\mu_{\alpha,\,\beta}(a,b),\,q_{\gamma}N_{\gamma}(c)\Big)\\
			&+N_{\alpha\,\beta\,\gamma}\Big(\mu_{\alpha,\,\beta\,\gamma}\big(p_{\alpha}(a),\,\mu_{\beta,\,\gamma}^N(b,c)\big)-\mu_{\alpha\,\beta,\,\gamma}\big(\mu_{\alpha,\,\beta}^N(a,b),\,q_{\gamma}(c)\big)\Big)-\mu_{\alpha,\,\beta\,\gamma}\big(p_{\alpha}(a),\,N_{\beta\,\gamma}\mu_{\beta,\,\gamma}^N(b,c)\big)\\
			&-\mu_{\alpha,\,\beta\,\gamma}\Big(p_{\alpha}N_{\alpha}(a),\,\mu_{\beta,\,\gamma}\big(N_{\beta}(b),c\big)+\mu_{\beta,\,\gamma}\big(b,\,N_{\gamma}(c)\big)-N_{\beta\,\gamma}\mu_{\beta\,\gamma}(b,c)\Big)\hspace{1cm}(\text{by Eq.~(\ref{Nijf-def-1})})\\
			=&\mu_{\alpha\,\beta,\,\gamma}\Big(N_{\alpha\,\beta}\mu_{\alpha,\,\beta}^N(a,b),\,q_{\gamma}(c)\Big)+\mu_{\alpha,\,\beta\,\gamma}\Big(p_{\alpha}N_{\alpha}(a),\,\mu_{\beta,\,\gamma}\big(b,\,N_{\gamma}(c)\big)\Big)+\mu_{\alpha,\,\beta\,\gamma}\Big(p_{\alpha}(a),\,\mu_{\beta,\,\gamma}\big(N_{\beta}(b),\,N_{\gamma}(c)\big)\Big)\\
			&-\mu_{\alpha\,\beta,\,\gamma}\Big(N_{\alpha\,\beta}\mu_{\alpha,\,\beta}(a,b),\,q_{\gamma}N_{\gamma}(c)\Big)+N_{\alpha\,\beta\,\gamma}\Big(\mu_{\alpha,\,\beta\,\gamma}\big(p_{\alpha}(a),\,\mu_{\beta,\,\gamma}^N(b,c)\big)-\mu_{\alpha\,\beta,\,\gamma}\big(\mu_{\alpha,\,\beta}^N(a,b),\,q_{\gamma}(c)\big)\Big)\\
			&-\mu_{\alpha,\,\beta\,\gamma}\Big(p_{\alpha}N_{\alpha}(a),\,\mu_{\beta,\,\gamma}\big(N_{\beta}(b),\,c\big)\Big)-\mu_{\alpha,\,\beta\,\gamma}\Big(p_{\alpha}N_{\alpha}(a),\,\mu_{\beta,\,\gamma}\big(b,\,N_{\gamma}(c)\big)\Big)+\mu_{\alpha,\,\beta\,\gamma}\Big(p_{\alpha}N_{\alpha}(a),\,N_{\beta\,\gamma}\mu_{\beta,\,\gamma}(b,c)\Big)\\
			&-\mu_{\alpha,\,\beta\,\gamma}\Big(p_{\alpha}(a),\,N_{\beta\,\gamma}\mu_{\beta,\,\gamma}^N(b,c)\Big)\hspace{1cm}(\text{by Eq.~(\ref{eqasso})})\\
			=&\mu_{\alpha\,\beta,\,\gamma}\Big(N_{\alpha\,\beta}\mu_{\alpha,\,\beta}^N(a,b),\,q_{\gamma}(c)\Big)+\mu_{\alpha,\,\beta\,\gamma}\Big(p_{\alpha}(a),\,\mu_{\beta,\,\gamma}\big(N_{\beta}(b),\,N_{\gamma}(c)\big)\Big)-\mu_{\alpha\,\beta,\,\gamma}\Big(N_{\alpha\,\beta}\mu_{\alpha,\,\beta}(a,b),\,q_{\gamma}N_{\gamma}(c)\Big)\\
			&+N_{\alpha\,\beta\,\gamma}\Big(\mu_{\alpha\,\beta,\,\gamma}^N\big(\mu_{\alpha,\,\beta}(a,b),q_{\gamma}(c)\big)-\mu_{\alpha,\,\beta\,\gamma}^N\big(p_{\alpha}(a),\mu_{\beta,\,\gamma}(b,c)\big)\Big)-\mu_{\alpha,\,\beta\,\gamma}\Big(p_{\alpha}N_{\alpha}(a),\,\mu_{\beta,\,\gamma}\big(N_{\beta}(b),\,c\big)\Big)\\
			&+\mu_{\alpha,\,\beta\,\gamma}\Big(p_{\alpha}N_{\alpha}(a),\,N_{\beta\,\gamma}\mu_{\beta,\,\gamma}(b,c)\Big)-\mu_{\alpha,\,\beta\,\gamma}\Big(p_{\alpha}(a),\,N_{\beta\,\gamma}\mu_{\beta,\,\gamma}^N(b,c)\Big)\hspace{1cm}(\text{by Eq.~(\ref{asso-eq2})})\\
			=&\mu_{\alpha,\,\beta\,\gamma}\Big(p_{\alpha}(a),\,\mu_{\beta,\,\gamma}\big(N_{\beta}(b),\,N_{\gamma}(c)\big)-N_{\beta\,\gamma}\mu_{\beta,\,\gamma}^N(b,c)\Big)-\mu_{\alpha\,\beta,\,\gamma}\Big(N_{\alpha\,\beta}\mu_{\alpha,\,\beta}(a,b),\,N_{\gamma}q_{\gamma}(c)\Big)\\
			&+\mu_{\alpha,\,\beta\,\gamma}\Big(N_{\alpha}p_{\alpha}(a),\,N_{\beta\,\gamma}\mu_{\beta,\,\gamma}(b,c)\Big)+N_{\alpha\,\beta\,\gamma}\mu_{\alpha\,\beta,\,\gamma}^N\big(\mu_{\alpha,\,\beta}(a,b),q_{\gamma}(c)\big)-N_{\alpha\,\beta\,\gamma}\mu_{\alpha\,\beta,\,\gamma}^N(p_{\alpha}(a),\mu_{\beta,\,\gamma}(b,c))\\
			&-\mu_{\alpha\,\beta,\,\gamma}\Big(\mu_{\alpha,\,\beta}\big(N_{\alpha}(a),\,N_{\beta}(b)\big)-N_{\alpha\,\beta}\mu_{\alpha,\,\beta}^N(a,b),\,q_{\gamma}(c)\Big)\hspace{1cm}(\text{by Eq.~(\ref{eqasso}) and Eq.~(\ref{Nijf-def-1})})\\
			=&\mu_{\alpha,\,\beta\,\gamma}\Big(p_{\alpha}(a),\,\psi_{\beta,\,\gamma}^N(b,c)\Big)-\psi_{\alpha\,\beta,\,\gamma}^N\big(\mu_{\alpha,\,\beta}(a,b),q_{\gamma}(c)\big)+\psi_{\alpha,\,\beta\,\gamma}^N\big(p_{\alpha}(a),\mu_{\beta,\,\gamma}(b,c)\big)\\
			&-\mu_{\alpha\,\beta,\,\gamma}\Big(\psi_{\alpha,\,\beta}^N(a,b),\,q_{\gamma}(c)\Big)\hspace{1cm}(\text{by Eq.~(\ref{def-psi})})\\
			=&(\delta_{\mathrm{Alg}}^2\psi^N)_{\alpha,\,\beta,\,\gamma}(a,b,c).\hspace{1cm}(\text{by Eq.~(\ref{BiHomOmHochschdiff})})
		\end{align*}
		Thus, by Proposition~\ref{a+b}~\ref{prop-Nijf-BHOasso}, we get
		\[(\delta_{\mathrm{Alg}}^2\psi^N)_{\alpha,\,\beta,\,\gamma}(a,b,c)=\mu_{\alpha\,\beta,\,\gamma}^N\big(\mu_{\alpha,\,\beta}^N(a,b),q_{\gamma}(c)\big)-\mu_{\alpha,\,\beta\,\gamma}^N\big(p_{\alpha}(a),\mu_{\beta,\,\gamma}^N(b,c)\big)\\
		=0.\]
		This completes the proof.
\end{proof}

\subsection{Deformations of Rota-Baxter family BiHom-$\Omega$-associative algebras}

In this subsection, we will study the deformations of Rota-Baxter family BiHom-$\Omega$-associative algebras and interpret them via cohomology groups of Rota-Baxter family BiHom-$\Omega$-associative algebras defined in Section~\ref{sec3}.

Let $ (A,\mu_{\alpha,\,\beta},R_{\omega},p_{\omega},q_{\omega})_{\alpha,\,\beta,\,\omega\in \Omega} $ be a Rota-Baxter family BiHom-$\Omega$-associative algebra of weight $\lambda$. We define 
\[\mu_{\alpha,\,\beta}^{t}=\sum_{i=0}^{\infty} \mu_{\alpha,\,\beta}^{i}t^{i} : A[[t]] \times A[[t]]\rightarrow A[[t]], \quad (\mu_{\alpha,\,\beta}^{i})_{\alpha,\,\beta\in\Omega}\in \mathrm{C}_{\Omega}^{2}(A), \]\[ R_{\omega}^{t}=\sum_{i=0}^{\infty}R_{\omega}^{i}t^{i} : A[[t]]\rightarrow A[[t]],\quad (R_{\omega}^{i})_{\omega\in\Omega}\in \mathrm{C}_{\mathrm{RBF}_{\lambda}}^{1}(A),\]
for all $\alpha,\,\beta,\,\omega\in \Omega$.
\begin{defn}
	A \textbf{1-parameter formal deformation of Rota-Baxter family BiHom-$\Omega$-associative algebra $ (A,\mu_{\alpha,\,\beta}, R_{\omega},p_{\omega},q_{\omega})_{\alpha,\,\beta,\,\omega\in \Omega} $} is a pair $ (\mu_{\alpha,\,\beta}^{t}, R_{\omega}^{t})_{\alpha,\,\beta,\,\omega\in \Omega} $ such that $(A[[t]],\mu_{\alpha,\,\beta}^t,R_{\omega}^t,p_{\omega},q_{\omega})_{\alpha,\,\beta,\,\omega\in\Omega}$ is a Rota-Baxter family BiHom-$\Omega$-associative algebra structure over $ \mathbf{k}[[t]] $ and we have a convention that $ (\mu_{\alpha,\,\beta}^{0},R_{\omega}^{0})_{\alpha,\,\beta,\,\omega\in\Omega}=(\mu_{\alpha,\,\beta}, R_{\omega})_{\alpha,\,\beta,\,\omega\in\Omega}$.

	Power series $ (\mu_{\alpha,\,\beta}^{t})_{\alpha,\,\beta\in \Omega} $ and $ (R_{\omega}^{t})_{\omega\in \Omega} $ determine a 1-parameter formal deformation of Rota-Baxter family BiHom-$\Omega$-associative algebra $ (A,\mu_{\alpha,\,\beta}, R_{\omega},p_{\omega},q_{\omega})_{\alpha,\,\beta,\,\omega\in \Omega} $ if and only if
	\[\mu_{\alpha,\,\beta\gamma}^{t}(p_{\alpha}(a), \mu_{\beta ,\gamma}^{t}(b, c))=\mu_{\alpha\beta ,\gamma}^{t}(\mu_{\alpha,\,\beta}^{t}(a, b), q_{\gamma}(c)),\]
	\[\mu_{\alpha,\,\beta}^{t}(R_{\alpha}(a), R_{\beta}(b))=R_{\alpha\beta}^{t}(\mu_{\alpha,\,\beta}^{t}(a, R_{\beta}^{t}(b))+\mu_{\alpha,\,\beta}^{t}(R_{\alpha}^{t}(a), b)+\lambda \mu_{\alpha,\,\beta}^{t}(a, b)),\]
	for all $a,b,c\in A,\,\alpha,\,\beta,\,\gamma\in \Omega$.
\end{defn}

By expanding these equations and comparing the coefficient of $ t^{n} $, we obtain that $ ( \mu_{\alpha,\,\beta}^{i})_{\alpha,\,\beta\in \Omega}  $ and $ ( R_{\omega}^{i})_{\omega\in \Omega}  $ have to satisfy:

\begin{align}\label{*-1}
	\sum_{i=0}^{n}\mu_{\alpha\beta ,\gamma}^{i}\circ(\mu_{\alpha,\,\beta}^{n-i}\otimes q_{\gamma})=\sum_{i=0}^{n}\mu_{\alpha,\,\beta\gamma}^{i}\circ (p_{\alpha}\otimes \mu_{\beta ,\gamma}^{n-i}),
\end{align}

\begin{equation}\label{*-2}
	\begin{split}
		\sum_{i+j+k=n ; i,j,k\geq 0}\mu_{\alpha,\,\beta}^{i}\circ (R_{\alpha}^{j}\otimes R_{\beta}^{k})&=\sum_{i+j+k=n ; i,j,k\geq 0}R_{\alpha\beta}^{i}\circ \mu_{\alpha,\,\beta}^{j}\circ(id \otimes R_{\beta}^{k})+\sum_{i+j+k=n ; i,j,k\geq 0}R_{\alpha\beta}^{i}\circ\mu_{\alpha,\,\beta}^{j}\circ (R_{\alpha}^{k}\otimes id )\\
		&+\lambda \sum_{i+j=n;i,j\geq 0}R_{\alpha\beta}^{i}\circ \mu_{\alpha,\,\beta}^{j},\hspace{1cm} \text{for all $ n\geq 0,\;\alpha,\,\beta,\,\gamma\in \Omega $}.
	\end{split}
\end{equation}

Obviously, when $ n=0 $, Eqs.~(\ref{*-1})-(\ref{*-2}) reduce to Eq.~(\ref{eqasso}) and Eq.~(\ref{RBfbihom}), respectively.

\begin{prop}\label{RBFBHO-mu1-2cocycle}
	If $ (\mu_{\alpha,\,\beta}^{t}, R_{\omega}^{t})_{\alpha,\,\beta,\,\omega\in \Omega} $ is a 1-parameter formal deformation of Rota-Baxter family BiHom-$\Omega$-associative algebra $A$ of weight $\lambda$. Then $ ( \mu_{\alpha,\,\beta}^{1}, R_{\omega}^{1})_{\alpha,\,\beta,\,\omega\in \Omega} $ is a 2-cocycle in the cochain complex $ \mathrm{C}_{\mathrm{RBFA}_{\lambda}}^{\bullet}(A). $
\end{prop}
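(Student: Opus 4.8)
The plan is to apply the characterization of $2$-cocycles in Remark~\ref{RBFA-2-cocycle}. Since the coefficients $(\mu_{\alpha,\,\beta}^1)_{\alpha,\,\beta\in\Omega}\in\mathrm{C}_\Omega^2(A)$ and $(R_\omega^1)_{\omega\in\Omega}\in\mathrm{C}_{\mathrm{RBF}_\lambda}^1(A)$ are already built into the setup (they follow from comparing the $t^1$-coefficients in the multiplicativity axiom~(\ref{eqalfabeta}) and in~(\ref{pR=Rp}), which force $\mu^1$ and $R^1$ to commute with $p_\omega,q_\omega$ in the required way), it suffices to verify the two coboundary conditions
\[
\delta_{\mathrm{Alg}}^2(\mu^1)=0 \qquad\text{and}\qquad -\partial^1(R^1)=\Phi^2(\mu^1).
\]
Each of these comes from extracting the $t^1$-coefficient of one of the two defining deformation equations~(\ref{*-1}) and~(\ref{*-2}), using throughout the convention $\mu^0=\mu$ and $R^0=R$.

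First I would treat the associativity condition. Setting $n=1$ in~(\ref{*-1}) and substituting $\mu^0=\mu$ yields
\[
\mu_{\alpha\beta,\,\gamma}(\mu_{\alpha,\,\beta}^1\otimes q_\gamma)+\mu_{\alpha\beta,\,\gamma}^1(\mu_{\alpha,\,\beta}\otimes q_\gamma)=\mu_{\alpha,\,\beta\gamma}(p_\alpha\otimes\mu_{\beta,\,\gamma}^1)+\mu_{\alpha,\,\beta\gamma}^1(p_\alpha\otimes\mu_{\beta,\,\gamma}).
\]
After transposing all terms to one side, this is precisely (the negative of) the vanishing of the $2$-coboundary $\delta_{\mathrm{Alg}}^2(\mu^1)$ for the regular bimodule, where $\rhd_{\alpha,\,\beta}=\lhd_{\alpha,\,\beta}=\mu_{\alpha,\,\beta}$, as read off from~(\ref{cocycle}) (equivalently, from the $n=2$ instance of~(\ref{BiHomOmHochschdiff})). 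Hence $\mu^1$ is a $2$-cocycle in $\mathrm{C}_\Omega^2(A)$.

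The second condition is where the real work lies. I would extract the $t^1$-coefficient of the Rota--Baxter deformation equation~(\ref{*-2}); the terms with $i+j+k=1$ produce an identity relating $\mu^1(R\otimes R)$, $\mu(R^1\otimes R)$, $\mu(R\otimes R^1)$ on one side to the six ``mixed'' composites $R^1\mu(\mathrm{id}\otimes R)$, $R\mu^1(\mathrm{id}\otimes R)$, $R\mu(\mathrm{id}\otimes R^1)$ and their left-hand analogues, together with the weight terms $\lambda R^1\mu$ and $\lambda R\mu^1$. In parallel, I would expand $\Phi^2(\mu^1)$ from~(\ref{defof-Phi}) with $n=2$ (regular bimodule, $T_\omega=R_\omega$), giving the four terms $\mu^1(R\otimes R)-\lambda R\mu^1-R\mu^1(R\otimes\mathrm{id})-R\mu^1(\mathrm{id}\otimes R)$, and expand $\partial^1(R^1)$ from the $n=1$ instance of~(\ref{defof-partial}) (with $\rhd=\lhd=\mu$, $T=R$), recalling that the middle term involves the twisted product $x\star_{\alpha,\,\beta}y=\mu(x,R_\beta y)+\mu(R_\alpha x,y)+\lambda\mu(x,y)$, which itself unfolds into three summands. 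The main obstacle is the bookkeeping: one must check that the eleven terms of $-\partial^1(R^1)-\Phi^2(\mu^1)$ cancel against the eleven terms obtained by moving everything in the $t^1$-coefficient of~(\ref{*-2}) to one side. This is a finite, purely mechanical term-by-term match in which the three $R$-at-both-slots contributions, the six singly-twisted contributions, and the two $\lambda$-weighted contributions pair up exactly; once it is carried out, the identity $-\partial^1(R^1)=\Phi^2(\mu^1)$ follows. Combining the two verified conditions with Remark~\ref{RBFA-2-cocycle} gives that $(\mu^1,R^1)$ is a $2$-cocycle in $\mathrm{C}_{\mathrm{RBFA}_\lambda}^\bullet(A)$, as claimed.
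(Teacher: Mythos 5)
Your proposal is correct and follows essentially the same route as the paper: extract the $t^1$-coefficients of Eqs.~(\ref{*-1}) and~(\ref{*-2}), identify the first with $\delta_{\mathrm{Alg}}^2(\mu^1)=0$ and the second with $\Phi^2(\mu^1)=-\partial^1(R^1)$ via Eqs.~(\ref{defof-partial}) and~(\ref{defof-Phi}), then invoke Remark~\ref{RBFA-2-cocycle}. The paper likewise leaves the final term-by-term cancellation as a displayed identity rather than an itemized computation, so nothing essential is missing from your version.
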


\begin{proof}
	For any $\alpha,\,\beta,\,\gamma,\,\omega,\,\eta\in \Omega$ and $ n=1 $, then Eqs.~(\ref{*-1})-(\ref{*-2}) become
	\[\mu_{\alpha\,\beta ,\gamma}^{1}\circ(\mu_{\alpha,\,\beta}\otimes q_{\gamma})+\mu_{\alpha\,\beta,\,\gamma}\circ (\mu_{\alpha,\,\beta}^{1}\otimes q_{\gamma})=\mu_{\alpha,\,\beta\,\gamma}^{1}\circ(p_{\alpha}\otimes \mu_{\beta,\,\gamma})+\mu_{\alpha,\,\beta\,\gamma}\circ (p_{\alpha}\otimes \mu_{\beta,\,\gamma}^{1}),\]
	and
	\begin{align*}
		&\mu_{\omega,\,\eta}^{1}(R_{\omega}\otimes R_{\eta})-\big( R_{\omega\,\eta}\circ\mu_{\omega,\,\eta}^{1}\circ(id \otimes R_{\eta})+R_{\omega\,\eta}\circ\mu_{\omega,\,\eta}^{1}\circ(R_{\omega}\otimes id )+\lambda R_{\omega\,\eta}\circ \mu_{\omega,\,\eta}^{1}\big) \\
		=&-\big( \mu_{\omega,\,\eta}\circ (R_{\omega}\otimes R_{\eta}^{1})-R_{\omega\,\eta}\circ \mu_{\omega,\,\eta}\circ(id\otimes R_{\eta}^{1})\big)-\big( \mu_{\omega,\,\eta}\circ (R_{\omega}^{1}\otimes R_{\eta})-R_{\omega\,\eta}\circ \mu_{\omega,\,\eta}\circ (R_{\omega}^{1}\otimes id)\big) \\
		&+\big( R_{\omega\,\eta}^{1}\circ \mu_{\omega,\,\eta}\circ(id \otimes R_{\eta})+R_{\omega\,\eta}^{1}\circ \mu_{\omega,\,\eta}\circ (R_{\omega}\otimes id )+\lambda R_{\omega\,\eta}^{1}\circ \mu_{\omega,\,\eta}\big).
	\end{align*}
	
	Note that the first equation is exactly $ \delta_{\mathrm{Alg}}^{2}(\mu^{1})_{\alpha,\,\beta,\,\gamma}=0$. For the second equation, by Eq.~(\ref{defof-partial}) and Eq.~(\ref{defof-Phi}), we have $\Phi^{2}(\mu^{1})_{\omega,\,\eta}=-\partial^{1}(R^{1})_{\omega,\,\eta}.$ Thus, by Definition~\ref{diffof-RBFbihomOassoalg} and Remark~\ref{RBFA-2-cocycle}, we obtain that $ ( \mu_{\alpha,\,\beta}^{1}, R_{\omega}^{1})_{\alpha,\,\beta,\,\omega\in \Omega} $ is a 2-cocycle in $ \mathrm{C}_{\mathrm{RBFA}_{\lambda}}^{\bullet}(A). $
\end{proof}

\begin{coro}
	In particular, if $ (\mu_{\alpha,\,\beta}^{t}, R_{\omega}^{t})_{\alpha,\,\beta,\,\omega\in \Omega} $ is a 1-parameter formal deformation of Rota-Baxter family BiHom-$\Omega$-associative algebra $A$ of weight $\lambda$, then we have the following results.
	\begin{enumerate}
		\item\label{it:mu-2-cocycle} The family of bilinear maps $(\mu_{\alpha,\,\beta}^1)_{\alpha,\,\beta\in\Omega}$ is a 2-cocycle in cochain complex $\mathrm{C}_{\Omega}^2(A)$.
		\item\label{it:R-1-cocycle} The family of linear maps $(R_{\omega}^1)_{\omega\in\Omega}$ is a 1-cocycle in cochain complex $\mathrm{C}_{\mathrm{RBF}_{\lambda}}^1(A)$.
	\end{enumerate}
\end{coro}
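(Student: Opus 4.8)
My plan is to obtain both assertions by extracting the coefficient of $t$ from the two structure equations (\ref{*-1}) and (\ref{*-2}) of the deformation and matching each against the explicit coboundary formulas of Section~\ref{sec3}. I would treat the first assertion first, as it is self-contained, and then the second, which carries the real difficulty.

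For the first assertion, I specialize (\ref{*-1}) to $n=1$. The order-$t$ associativity constraint is $\mu^1_{\alpha\beta,\gamma}\circ(\mu_{\alpha,\beta}\otimes q_\gamma)+\mu_{\alpha\beta,\gamma}\circ(\mu^1_{\alpha,\beta}\otimes q_\gamma)=\mu^1_{\alpha,\beta\gamma}\circ(p_\alpha\otimes\mu_{\beta,\gamma})+\mu_{\alpha,\beta\gamma}\circ(p_\alpha\otimes\mu^1_{\beta,\gamma})$, which is literally $(\delta^2_{\mathrm{Alg}}\mu^1)_{\alpha,\beta,\gamma}=0$ by Eq.~(\ref{BiHomOmHochschdiff}). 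The order-$t$ part of the multiplicativity conditions forces $\mu^1$ to intertwine with $(p_\omega)$ and $(q_\omega)$ as in Eq.~(\ref{Hoc1}), so $\mu^1\in\mathrm{C}^2_\Omega(A)$; together with Eq.~(\ref{cocycle}) and Remark~\ref{2-cocycle} this gives $\mu^1\in\mathrm{Z}^2_\Omega(A)$, establishing the first assertion.

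For the second assertion the target is the genuine cocycle identity $\partial^1(R^1)=0$. I expand (\ref{*-2}) at $n=1$ and sort its terms into those linear in $R^1$ (with $\mu,R$ fixed) and those linear in $\mu^1$. In the regular bimodule, where $T_\omega=R_\omega$ and $\rhd_{\alpha,\beta}=\lhd_{\alpha,\beta}=\cdot_{\alpha,\beta}$, the first group reassembles, via Eq.~(\ref{defof-partial}) at $n=1$, precisely into $-\partial^1(R^1)$, while by the $n=2$ case of Eq.~(\ref{defof-Phi}) the second group reassembles into $\Phi^2(\mu^1)$; thus (\ref{*-2}) at $n=1$ yields $\partial^1(R^1)=-\Phi^2(\mu^1)$. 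The main obstacle is to remove the cross term $\Phi^2(\mu^1)$ so as to reach $\partial^1(R^1)=0$. The natural attempt is to invoke the first assertion together with the chain-map identity $\partial^n\circ\Phi^n=\Phi^{n+1}\circ\delta^n_{\mathrm{Alg}}$: from $\delta^2_{\mathrm{Alg}}\mu^1=0$ one gets $\partial^2\circ\Phi^2(\mu^1)=\Phi^3(\delta^2_{\mathrm{Alg}}\mu^1)=0$, so $\Phi^2(\mu^1)$ is a $2$-cocycle in $\mathrm{C}^\bullet_{\mathrm{RBF}_\lambda}(A)$. This is not yet enough, since being a cocycle does not force vanishing; the decisive point I must settle is that $\Phi^2(\mu^1)$ vanishes on the nose, equivalently that the first-order multiplication datum $\mu^1$ contributes trivially to the Rota--Baxter cocycle. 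I expect this to be the crux of the proof, and I would attack it by playing the explicit summation structure of $\Phi^2$ in Eq.~(\ref{defof-Phi}) against the cocycle identity (\ref{cocycle}) for $\mu^1$; once this vanishing is secured, $\partial^1(R^1)=0$ and $R^1$ is a $1$-cocycle in $\mathrm{C}^1_{\mathrm{RBF}_\lambda}(A)$ as claimed, in agreement with the pair $(\mu^1,R^1)$ being a total $2$-cocycle in $\mathrm{C}^\bullet_{\mathrm{RBFA}_\lambda}(A)$ from Proposition~\ref{RBFBHO-mu1-2cocycle} via Remark~\ref{RBFA-2-cocycle}.
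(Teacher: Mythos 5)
Your item~\ref{it:mu-2-cocycle} is correct and is exactly the paper's argument: the $n=1$ case of Eq.~(\ref{*-1}) is literally $\delta_{\mathrm{Alg}}^{2}(\mu^{1})_{\alpha,\,\beta,\,\gamma}=0$ (this is the first half of Proposition~\ref{RBFBHO-mu1-2cocycle}), and the order-$t$ multiplicativity puts $\mu^{1}$ in $\mathrm{C}_{\Omega}^{2}(A)$.

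For item~\ref{it:R-1-cocycle} there is a genuine gap, and it is not one you can close. You correctly extract from Eq.~(\ref{*-2}) at $n=1$ the identity $\partial^{1}(R^{1})=-\Phi^{2}(\mu^{1})$, and you correctly identify that everything hinges on the vanishing of $\Phi^{2}(\mu^{1})$. But that vanishing is not a missing lemma to be supplied: it is false for a general deformation. Your proposed attack --- playing the summation structure of Eq.~(\ref{defof-Phi}) against the cocycle identity (\ref{cocycle}) for $\mu^{1}$ --- cannot work, because Eq.~(\ref{cocycle}) involves only $\mu$, $p$, $q$ and carries no information whatsoever about the operators $(R_{\omega})_{\omega\in\Omega}$, while $\Phi^{2}(\mu^{1})_{\alpha,\,\beta}(a,b)=\mu^{1}_{\alpha,\,\beta}(R_{\alpha}(a),R_{\beta}(b))-R_{\alpha\beta}\mu^{1}_{\alpha,\,\beta}(R_{\alpha}(a),b)-R_{\alpha\beta}\mu^{1}_{\alpha,\,\beta}(a,R_{\beta}(b))-\lambda R_{\alpha\beta}\mu^{1}_{\alpha,\,\beta}(a,b)$ is built entirely from them. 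A concrete obstruction: take the trivial deformation of the undeformed algebra generated by $\psi_{\alpha}^{t}=\mathrm{id}+t\psi_{\alpha}^{1}$. By the first-order formulas in the proof of Theorem~\ref{theorem-2-cocycle}, it has $\mu^{1}=\delta_{\mathrm{Alg}}^{1}(\psi^{1})$ and $R^{1}=-\Phi^{1}(\psi^{1})$, so by the chain-map identity $\partial^{1}(R^{1})=-\partial^{1}\Phi^{1}(\psi^{1})=-\Phi^{2}(\delta_{\mathrm{Alg}}^{1}\psi^{1})=-\Phi^{2}(\mu^{1})$, a generically nonzero $1$-coboundary in $\mathrm{C}_{\mathrm{RBF}_{\lambda}}^{\bullet}(A)$; already for $\Omega$ trivial (an ordinary Rota--Baxter algebra) $\Phi^{1}(\psi^{1})=\psi^{1}\circ R-R\circ\psi^{1}$ need not be a $1$-cocycle. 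So in general $R^{1}$ is a cocycle only modulo $\mathrm{Im}\,\Phi^{2}$, exactly as the total-complex statement $(\mu^{1},R^{1})\in\ker d^{2}$ predicts --- your fallback observation that $\Phi^{2}(\mu^{1})$ is merely a $2$-cocycle in $\mathrm{C}_{\mathrm{RBF}_{\lambda}}^{\bullet}(A)$ is as far as the general hypotheses take you.

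What the paper actually does is specialize: its proof of item~\ref{it:R-1-cocycle} explicitly sets $(\mu_{\alpha,\,\beta}^{t})_{\alpha,\,\beta\in\Omega}=(\mu_{\alpha,\,\beta})_{\alpha,\,\beta\in\Omega}$, i.e.\ only the Rota--Baxter family is deformed. Then every $\mu^{1}$-term in Eq.~(\ref{*-2}) at $n=1$ drops out, and the surviving identity is, by Eq.~(\ref{defof-partial}), precisely $\partial^{1}(R^{1})_{\alpha,\,\beta}=0$. Item~\ref{it:R-1-cocycle} must therefore be read with that hypothesis (suppressed in the corollary's statement but visible in its proof); your plan to establish it for an arbitrary deformation would fail at the step you yourself flagged as the crux.
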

\begin{proof}
	\ref{it:mu-2-cocycle}. By Proposition~\ref{RBFBHO-mu1-2cocycle}, we get $\delta_{\mathrm{Alg}}^2(\mu^1)_{\alpha,\,\beta,\,\gamma}=0$, for all $\alpha,\,\beta,\,\gamma\in\Omega$. Thus, $(\mu_{\alpha,\,\beta}^1)_{\alpha,\,\beta\in\Omega}$ is a 2-cocycle in cochain complex $\mathrm{C}_{\Omega}^2(A)$.
	
	\ref{it:R-1-cocycle}. By Eq.~(\ref{RBfbihom}) and Eq.~(\ref{*-2}), when $(\mu_{\alpha,\,\beta}^t)_{\alpha,\,\beta\in\Omega}=(\mu_{\alpha,\,\beta})_{\alpha,\,\beta\in\Omega}$ and $n=1$, we have
	\begin{align*}
		&\quad\mu_{\alpha,\,\beta}(R_{\alpha}^1,R_{\beta})+\mu_{\alpha,\,\beta}(R_{\alpha},R_{\beta}^{1})\\
		&=R_{\alpha\,\beta}^1\big(\mu_{\alpha,\,\beta}(id,R_{\beta})+\mu_{\alpha,\,\beta}(R_{\alpha},id)\big)+R_{\alpha\,\beta}\big(\mu_{\alpha,\,\beta}(id,R_{\beta}^{1})+\mu_{\alpha,\,\beta}(R_{\alpha}^1,id)\big)+\lambda R_{\alpha\,\beta}^1\mu_{\alpha,\,\beta},
	\end{align*}
then by Eq.~(\ref{defof-partial}), we get $\partial^1(R^1)_{\alpha,\,\beta}=0$, for all $\alpha,\,\beta\in\Omega$. Thus, $(R_{\omega}^1)_{\omega\in\Omega}$ is a 1-cocycle in cochain complex $\mathrm{C}_{\mathrm{RBF}_{\lambda}}^1(A)$.
\end{proof}

\begin{defn}
	Let $ (\mu_{\alpha,\,\beta}^{t}, R_{\omega}^{t})_{\alpha,\,\beta,\,\omega\in \Omega}$ be a 1-parameter formal deformation of Rota-Baxter family BiHom-$\Omega$-associative algebra $ (A,\mu_{\alpha,\,\beta}, R_{\omega},p_{\omega},q_{\omega})_{\alpha,\,\beta,\,\omega\in \Omega} $. Then we call 2-cocycle $ ( \mu_{\alpha,\,\beta}^{1}, R_{\omega}^{1})_{\alpha,\,\beta,\,\omega\in \Omega} $ the \textbf{infinitesimal} of the 1-parameter formal deformation $ ( \mu_{\alpha,\,\beta}^{t}, R_{\omega}^{t})_{\alpha,\,\beta,\,\omega\in \Omega} $.
\end{defn}

\begin{defn}
	Two 1-parameter formal deformations $ (\mu_{\alpha,\,\beta}^{t}, R_{\omega}^{t})_{\alpha,\,\beta,\,\omega\in \Omega} $ and $ ( \bar{\mu}_{\alpha,\,\beta}^{t}, \bar{R}_{\omega}^{t})_{\alpha,\,\beta,\,\omega\in \Omega} $ of Rota-Baxter family BiHom-$\Omega$-associative algebra $A$ are said to be \textbf{equivalent} if there exists a power series formal homomorphism
	\[\psi_{\omega}^{t}=\sum_{i=0}\psi_{\omega}^{i}t^{i}: A[[t]]\rightarrow A[[t]],\quad \text{for all }\omega\in\Omega,\]
	where $ (\psi_{\omega}^{i})_{\omega\in \Omega}: A\rightarrow A $ is a family of linear maps with $ (\psi_{\omega}^{0})_{\omega\in\Omega}=id_{A} $, and for all $\alpha,\,\beta,\,\omega\in \Omega$,
	\[\psi_{\omega}^{t}\circ p_{\omega}=p_{\omega}\circ \psi_{\omega}^{t}, \quad \psi_{\omega}^{t}\circ q_{\omega}=q_{\omega}\circ \psi_{\omega}^{t},\]
	\begin{align}\label{**-1}
		\psi_{\alpha\beta}^{t}\circ \bar{\mu}_{\alpha,\,\beta}^{t}=\mu_{\alpha,\,\beta}^{t}\circ (\psi_{\alpha}^{t}\otimes \psi_{\beta}^{t}),
	\end{align}
	\begin{align}\label{**-2}
		\psi_{\omega}^{t}\circ\bar{R}_{\omega}^{t}=R_{\omega}^{t}\circ \psi_{\omega}^{t}.
	\end{align}
\end{defn}

\begin{theorem}\label{theorem-2-cocycle}
	The infinitesimals of two equivalent one-parameter formal deformations of Rota-Baxter family BiHom-$\Omega$-associative algebra $ (A,\mu_{\alpha,\,\beta}, R_{\omega},p_{\omega},q_{\omega})_{\alpha,\,\beta,\,\omega\in \Omega} $ are in the same cohomology class in $ \mathrm{H}_{\mathrm{RBFA}_{\lambda}}^{\bullet}(A). $
\end{theorem}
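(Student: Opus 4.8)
The plan is to exhibit an explicit $1$-cochain whose coboundary realizes the difference of the two infinitesimals, so that $(\mu^1_{\alpha,\,\beta},R^1_\omega)$ and $(\bar\mu^1_{\alpha,\,\beta},\bar R^1_\omega)$ become cohomologous in $\mathrm{H}^{\bullet}_{\mathrm{RBFA}_{\lambda}}(A)$. The equivalence between the two deformations is recorded by the formal homomorphism $\psi_\omega^t=\mathrm{id}_A+\sum_{i\geq 1}\psi_\omega^i t^i$; the natural candidate for the required $1$-cochain is its linear coefficient $\psi^1=(\psi_\omega^1)_{\omega\in\Omega}$, viewed through $(\psi^1,0)$ as an element of $\mathrm{C}^{1}_{\mathrm{RBFA}_{\lambda}}(A)=\mathrm{C}^{1}_{\Omega}(A)\oplus\mathrm{C}^{0}_{\mathrm{RBF}_{\lambda}}(A)$. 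Concretely, recalling from Proposition~\ref{RBFBHO-mu1-2cocycle} that both pairs are already $2$-cocycles, I aim to prove
\[(\mu^1_{\alpha,\,\beta}-\bar\mu^1_{\alpha,\,\beta},\,R^1_\omega-\bar R^1_\omega)=-d^{1}(\psi^1,0).\]

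First I would check that $\psi^1$ genuinely lies in $\mathrm{C}^{1}_{\Omega}(A)$: expanding the commutation relations $\psi_\omega^t\circ p_\omega=p_\omega\circ\psi_\omega^t$ and $\psi_\omega^t\circ q_\omega=q_\omega\circ\psi_\omega^t$ to order $t$ gives $\psi_\omega^1\circ p_\omega=p_\omega\circ\psi_\omega^1$ and $\psi_\omega^1\circ q_\omega=q_\omega\circ\psi_\omega^1$, which is precisely the defining condition of a $1$-cochain. Next I would expand Eq.~(\ref{**-1}), using $\mu^0=\bar\mu^0=\mu$ and $\psi^0=\mathrm{id}_A$, and compare the coefficients of $t$. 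This gives
\[\bar\mu^1_{\alpha,\,\beta}+\psi^1_{\alpha\beta}\circ\mu_{\alpha,\,\beta}=\mu^1_{\alpha,\,\beta}+\mu_{\alpha,\,\beta}\circ(\psi^1_\alpha\otimes\mathrm{id})+\mu_{\alpha,\,\beta}\circ(\mathrm{id}\otimes\psi^1_\beta).\]
Comparing the right-hand side with the $n=1$ instance of the coboundary formula in Eq.~(\ref{BiHomOmHochschdiff}) for the regular bimodule (where $\rhd=\lhd=\mu$), one reads off $\mu^1_{\alpha,\,\beta}-\bar\mu^1_{\alpha,\,\beta}=-(\delta_{\mathrm{Alg}}^{1}\psi^1)_{\alpha,\,\beta}$.

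Then I would expand the Rota-Baxter compatibility Eq.~(\ref{**-2}), again using $R^0=\bar R^0=R$ and $\psi^0=\mathrm{id}_A$, and compare the coefficients of $t$, obtaining
\[\bar R^1_\omega+\psi^1_\omega\circ R_\omega=R^1_\omega+R_\omega\circ\psi^1_\omega,\]
that is $R^1_\omega-\bar R^1_\omega=\psi^1_\omega\circ R_\omega-R_\omega\circ\psi^1_\omega$. Since for the regular Rota-Baxter family BiHom-$\Omega$-bimodule one has $T_\omega=R_\omega$, the definition of $\Phi^1$ in Eq.~(\ref{defof-Phi-1}) gives $\Phi^1(\psi^1)_\omega=\psi^1_\omega\circ R_\omega-R_\omega\circ\psi^1_\omega$, whence $R^1_\omega-\bar R^1_\omega=\Phi^1(\psi^1)_\omega$.

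Finally I would assemble the two computations through the differential $d^1$. Since $\partial^{0}(0)=0$, the formula in Definition~\ref{diffof-RBFbihomOassoalg} yields $d^{1}(\psi^1,0)=\big(\delta_{\mathrm{Alg}}^{1}(\psi^1),\,-\Phi^{1}(\psi^1)\big)=\big(\bar\mu^1_{\alpha,\,\beta}-\mu^1_{\alpha,\,\beta},\,\bar R^1_\omega-R^1_\omega\big)$, so that the difference of the two infinitesimals equals the coboundary $d^{1}(-\psi^1,0)$ and the two infinitesimals represent the same class in $\mathrm{H}^{2}_{\mathrm{RBFA}_{\lambda}}(A)$. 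The only delicate point is the consistent tracking of signs in the coboundary formula and in $\Phi^1$; beyond this bookkeeping everything reduces to a routine order-by-order expansion of the two equivalence identities, so I expect no genuine obstacle.
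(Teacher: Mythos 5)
Your proposal is correct and follows essentially the same route as the paper: expand the equivalence identities (\ref{**-1}) and (\ref{**-2}) to first order in $t$, identify $\bar\mu^1-\mu^1=\delta_{\mathrm{Alg}}^1(\psi^1)$ and $\bar R^1-R^1=-\Phi^1(\psi^1)$, and conclude that the difference of infinitesimals is $d^1$ of a $1$-cochain. Your explicit packaging of the cochain as $(\psi^1,0)$ with $\partial^0(0)=0$ is in fact slightly more careful than the paper's notation $d^1(\psi^1)$, but the argument is the same.
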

\begin{proof}
	Let $( \psi_{\omega}^{t})_{\omega\in \Omega} : \big(A[[t]],\bar{\mu}_{\alpha,\,\beta}^{t}, \bar{R}_{\omega}^{t},p_{\omega},q_{\omega}\big)_{\alpha,\,\beta,\,\omega\in \Omega}\rightarrow (A[[t]],\mu_{\alpha,\,\beta}^{t}, R_{\omega}^{t},p_{\omega},q_{\omega})_{\alpha,\,\beta,\,\omega\in \Omega} $ be a formal isohomomorphism. Expanding the identities and collecting coefficients of $t$, by Eqs.~(\ref{**-1})-(\ref{**-2}), for any $\alpha,\,\beta,\,\omega\in \Omega$, on the one hand,
	\[\sum_{i+j=n;\,i,j\geq 0}\psi_{\alpha\,\beta}^{i}\circ \bar{\mu}_{\alpha,\,\beta}^{j}=\sum_{i+j+k=n ;\,i,j,k\geq 0}\mu_{\alpha,\,\beta}^{i}(\psi_{\alpha}^{j}\otimes \psi_{\beta}^{k}),\]
	
\noindent	when $ n=1 $, by $ (\psi_{\omega}^{0})_{\omega\in\Omega}=id_{A} $ we have
	\[\bar{\mu}_{\alpha,\,\beta}^{1}+\psi_{\alpha\,\beta}^{1}\circ\mu_{\alpha,\,\beta}=\mu_{\alpha,\,\beta}^{1}+\mu_{\alpha,\,\beta}(\psi_{\alpha}^{1}\otimes id)+\mu_{\alpha,\,\beta}(id \otimes \psi_{\beta}^{1}),\]
	so by Eq.~(\ref{BiHomOmHochschdiff}), we have
	\[\bar{\mu}_{\alpha,\,\beta}^{1}-\mu_{\alpha,\,\beta}^{1}=\delta_{\mathrm{Alg}}^{1}(\psi^{1})_{\alpha,\,\beta}.\]
	On the other hand, we have
	\[\sum_{i+j=n;\,i,j\geq 0}\psi_{\omega}^{i}\circ \bar{R}_{\omega}^{j}=\sum_{i+j=n;\,i,j\geq 0}R_{\omega}^{i}\circ \psi_{\omega}^{j},\]
	when $ n=1 $, by $ \psi_{\omega}^{0}=id_{A} $ we have
	\[\bar{R}_{\omega}^{1}+\psi_{\omega}^{1}\circ R_{\omega}=R_{\omega}\circ \psi_{\omega}^{1}+R_{\omega}^{1},\]
	by Eq.~(\ref{defof-Phi-1}), we have
	\[\bar{R}_{\omega}^{1}-R_{\omega}^{1}=-\Phi^{1}(\psi^{1})_{\omega}.\]
	Thus, we have
	\begin{align*}
		(\bar{\mu}_{\alpha,\,\beta}^{1},\bar{R}_{\omega}^{1})_{\alpha,\,\beta,\,\omega\in\Omega}-(\mu_{\alpha,\,\beta}^{1},R_{\omega}^{1})_{\alpha,\,\beta,\,\omega\in\Omega}=&(\bar{\mu}_{\alpha,\,\beta}^1-\mu_{\alpha,\,\beta}^1,\bar{R}_{\omega}^1-R_{\omega}^1)_{\alpha,\,\beta,\,\omega\in\Omega}\\
		=&(\delta_{\mathrm{Alg}}^{1}(\psi^{1})_{\alpha,\,\beta}, -\Phi^{1}(\psi^{1})_{\omega})_{\alpha,\,\beta,\,\omega\in\Omega}\\
		=&\big(d^{1}(\psi^{1})_{\alpha,\,\beta,\,\omega}\big)_{\alpha,\,\beta,\,\omega\in\Omega} \in \mathit{B}_{\mathrm{RBFA}_{\lambda}}^{\bullet}(A)\subseteq \mathrm{C}_{\mathrm{RBFA}_{\lambda}}^{\bullet}(A).
	\end{align*}
This completes the proof.
\end{proof}

\begin{coro}
	In particular, when $R_{\omega}^t=R_{\omega}$ for all $\omega\in\Omega$, the corresponding cohomology controls formal deformations of BiHom-$\Omega$-associative product $(\mu_{\alpha,\,\beta}^t)_{\alpha,\,\beta\in\Omega}$.
\end{coro}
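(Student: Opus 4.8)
The plan is to obtain this statement as a direct specialization of Theorem~\ref{theorem-2-cocycle}, tracking what survives when the Rota-Baxter family is held fixed. First I would impose the hypothesis $R_\omega^t=R_\omega$, which forces $R_\omega^i=0$ for all $i\geq 1$ and $\omega\in\Omega$, so that a $1$-parameter formal deformation $(\mu_{\alpha,\,\beta}^t,R_\omega^t)$ degenerates to a deformation $(\mu_{\alpha,\,\beta}^t,R_\omega)$ of the product alone. I would then re-examine Eqs.~(\ref{*-1})--(\ref{*-2}): Eq.~(\ref{*-1}) is unchanged and encodes the BiHom-$\Omega$-associativity of $\mu^t$, while Eq.~(\ref{*-2}), after deleting every term carrying a factor $R_\omega^i$ with $i\geq 1$, collapses for each $n$ to $\mu_{\alpha,\,\beta}^n(R_\alpha\otimes R_\beta)=R_{\alpha\beta}\mu_{\alpha,\,\beta}^n(\mathrm{id}\otimes R_\beta)+R_{\alpha\beta}\mu_{\alpha,\,\beta}^n(R_\alpha\otimes\mathrm{id})+\lambda R_{\alpha\beta}\mu_{\alpha,\,\beta}^n$, i.e.\ each coefficient $\mu^n$ is again compatible with the fixed Rota-Baxter family.

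Next I would identify these conditions at order $n=1$ with the $2$-cocycle condition in $\mathrm{C}_{\mathrm{RBFA}_\lambda}^\bullet(A)$. By Proposition~\ref{RBFBHO-mu1-2cocycle} the infinitesimal $(\mu_{\alpha,\,\beta}^1,R_\omega^1)=(\mu_{\alpha,\,\beta}^1,0)$ is a $2$-cocycle; here the second component vanishes, so by Remark~\ref{RBFA-2-cocycle} the cocycle condition reduces to $\delta_{\mathrm{Alg}}^2(\mu^1)=0$ together with $\Phi^2(\mu^1)=0$. A short check using Eq.~(\ref{defof-Phi}) shows that, evaluated in the regular bimodule where $T_\omega=R_\omega$, one has $\Phi^2(\mu^1)=\mu^1(R\otimes R)-R\mu^1(R\otimes\mathrm{id})-R\mu^1(\mathrm{id}\otimes R)-\lambda R\mu^1$, which is precisely the Rota-Baxter compatibility above at $n=1$. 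Thus the surviving deformation data is captured exactly by those cocycles of $\mathrm{C}_{\mathrm{RBFA}_\lambda}^\bullet(A)$ whose $R$-component is zero.

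Finally I would invoke the equivalence part of the theory. When both deformations satisfy $\bar{R}_\omega^t=R_\omega^t=R_\omega$, Eq.~(\ref{**-2}) becomes $\psi_\omega^t\circ R_\omega=R_\omega\circ\psi_\omega^t$, whose order-$t$ term gives $\Phi^1(\psi^1)_\omega=\psi_\omega^1 R_\omega-R_\omega\psi_\omega^1=0$ by Eq.~(\ref{defof-Phi-1}). Feeding this into the computation in the proof of Theorem~\ref{theorem-2-cocycle} yields $(\bar{\mu}_{\alpha,\,\beta}^1,0)-(\mu_{\alpha,\,\beta}^1,0)=(\delta_{\mathrm{Alg}}^1(\psi^1)_{\alpha,\,\beta},\,-\Phi^1(\psi^1)_\omega)=d^1(\psi^1)$, a coboundary in $\mathrm{C}_{\mathrm{RBFA}_\lambda}^\bullet(A)$. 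Hence equivalent deformations with fixed $R$ carry cohomologous infinitesimals, so the class of $(\mu^1,0)$ in $\mathrm{H}_{\mathrm{RBFA}_\lambda}^\bullet(A)$ is an invariant of the equivalence class, which is exactly the sense in which the cohomology controls formal deformations of the product. The only genuine point to verify is the algebraic identity $\Phi^2(\mu^1)=0$ $\Leftrightarrow$ (Rota-Baxter compatibility at $n=1$); everything else is bookkeeping inherited from Theorem~\ref{theorem-2-cocycle}, so I do not expect a serious obstacle here.
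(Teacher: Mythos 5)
Your proposal is correct, and it refines the paper's argument rather than reproducing it. The paper's own proof is a two-line specialization of Theorem~\ref{theorem-2-cocycle}: it simply records that $\bar{\mu}^{1}_{\alpha,\,\beta}-\mu^{1}_{\alpha,\,\beta}=\delta_{\mathrm{Alg}}^{1}(\psi^{1})_{\alpha,\,\beta}$ and concludes that the infinitesimal determines a well-defined class in the plain cohomology $\mathrm{H}_{\Omega}^{2}(A)$ of the BiHom-$\Omega$-associative algebra, discarding the second component entirely. You instead keep the deformation inside the Rota-Baxter family complex: you check via Eq.~(\ref{*-2}) at $n=1$ that the fixed-$R$ hypothesis forces $\Phi^{2}(\mu^{1})=0$, so that $(\mu^{1},0)$ is a genuine $2$-cocycle in $\mathrm{C}_{\mathrm{RBFA}_{\lambda}}^{2}(A)$ (consistent with Proposition~\ref{RBFBHO-mu1-2cocycle} and Remark~\ref{RBFA-2-cocycle}), and you verify via the order-$t$ term of Eq.~(\ref{**-2}) that $\Phi^{1}(\psi^{1})=0$, so the difference of infinitesimals is the coboundary $d^{1}(\psi^{1},0)$ with vanishing second component. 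Your computations of $\Phi^{2}(\mu^{1})$ and $\Phi^{1}(\psi^{1})$ in the regular bimodule (where $T_{\omega}=R_{\omega}$) are correct. What your route buys is the extra information that the infinitesimal of a product-only deformation remains Rota-Baxter compatible and lives in the subspace of $\mathrm{H}_{\mathrm{RBFA}_{\lambda}}^{2}(A)$ with trivial operator component; what the paper's route buys is brevity and a statement purely in terms of $\mathrm{H}_{\Omega}^{2}(A)$, which is presumably the intended reading of ``the corresponding cohomology.'' Either reading yields a valid proof of the corollary.
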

\begin{proof}
	By Theorem~\ref{theorem-2-cocycle}, we get \[\bar{\mu}_{\alpha,\,\beta}^{1}-\mu_{\alpha,\,\beta}^{1}=\delta_{\mathrm{Alg}}^{1}(\psi^{1})_{\alpha,\,\beta},\quad \text{for all } \alpha,\,\beta\in \Omega.\]
	Therefore, the infinitesimals of two equivalent 1-parameter formal deformations of $A$ give rise to a same cohomology class in $\mathrm{H}_{\Omega}^2(A)$. This completes the proof.
\end{proof}

Next, we introduce the rigidity of Rota-Baxter family BiHom-$\Omega$-associative algebras.
\begin{defn}
	A Rota-Baxter family BiHom-$\Omega$-associative algebra $(A,\mu_{\alpha,\,\beta},R_{\omega},p_{\omega},q_{\omega})_{\alpha,\,\beta,\,\omega\in \Omega}$ is said to be \textbf{rigid} if any 1-parameter formal deformation $(\mu_{\alpha,\,\beta}^t,R_{\omega}^t)_{\alpha,\,\beta,\,\omega\in \Omega}$ of $A$ is equivalent to the undeformed one $(\bar{\mu}_{\alpha,\,\beta}^t=\mu_{\alpha,\,\beta},\bar{R}_{\omega}^t=R_{\omega})_{\alpha,\,\beta,\,\omega\in \Omega}$.
\end{defn}

\begin{theorem}
	Let $(A,\mu_{\alpha,\,\beta},R_{\omega},p_{\omega},q_{\omega})_{\alpha,\,\beta,\,\omega\in \Omega}$ be a Rota-Baxter family BiHom-$\Omega$-associative algebra of weight $\lambda$. If $\mathrm{H}^2_{\mathrm{RBFA}_{\lambda}}(A)=0$, then $(A,\mu_{\alpha,\,\beta}, R_{\omega},p_{\omega},q_{\omega})_{\alpha,\,\beta,\,\omega\in \Omega}$ is rigid.
\end{theorem}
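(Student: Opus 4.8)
The plan is to adapt Gerstenhaber's order-by-order argument to the Rota-Baxter family setting, using the vanishing of $\mathrm{H}^2_{\mathrm{RBFA}_{\lambda}}(A)$ to strip off the lowest nonzero deformation term at each stage. Let $(\mu_{\alpha,\,\beta}^{t}, R_{\omega}^{t})_{\alpha,\,\beta,\,\omega\in\Omega}$ be an arbitrary $1$-parameter formal deformation of $A$; I must produce a formal isomorphism carrying it to the undeformed deformation $(\bar{\mu}^{t}_{\alpha,\,\beta}=\mu_{\alpha,\,\beta},\,\bar{R}^{t}_{\omega}=R_{\omega})_{\alpha,\,\beta,\,\omega\in\Omega}$.

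First I would set up the induction: suppose the deformation agrees with the trivial one up to order $n-1$, i.e. $\mu^{i}_{\alpha,\,\beta}=0$ and $R^{i}_{\omega}=0$ for all $1\le i\le n-1$ (for $n=1$ this is vacuous). Collecting the coefficient of $t^{n}$ in the deformation equations~(\ref{*-1})--(\ref{*-2}) and using the inductive hypothesis to discard the mixed lower-order products, the surviving relations collapse to $\delta^{2}_{\mathrm{Alg}}(\mu^{n})=0$ together with $\Phi^{2}(\mu^{n})=-\partial^{1}(R^{n})$; that is, $(\mu^{n}_{\alpha,\,\beta},R^{n}_{\omega})_{\alpha,\,\beta,\,\omega\in\Omega}$ is a $2$-cocycle in $\mathrm{C}^{\bullet}_{\mathrm{RBFA}_{\lambda}}(A)$ in the sense of Remark~\ref{RBFA-2-cocycle}. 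This is exactly the computation underlying Proposition~\ref{RBFBHO-mu1-2cocycle}, which handles the case $n=1$; the general case is the same bookkeeping once the lower terms are known to vanish.

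Next, since $\mathrm{H}^{2}_{\mathrm{RBFA}_{\lambda}}(A)=0$, this cocycle is a coboundary, so there is a $1$-cochain $\psi^{n}=(\psi^{n}_{\omega})_{\omega\in\Omega}\in\mathrm{C}^{1}_{\Omega}(A)$ with $(\mu^{n}_{\alpha,\,\beta},R^{n}_{\omega})=d^{1}(\psi^{n})=(\delta^{1}_{\mathrm{Alg}}(\psi^{n})_{\alpha,\,\beta},\,-\Phi^{1}(\psi^{n})_{\omega})$, by Definition~\ref{diffof-RBFbihomOassoalg}. I would then transform the deformation by the formal isomorphism $\psi^{t}_{\omega}:=\mathrm{id}+t^{n}\psi^{n}_{\omega}$, producing an equivalent deformation $(\bar{\mu}^{t},\bar{R}^{t})$ via Eqs.~(\ref{**-1})--(\ref{**-2}). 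Matching the coefficient of $t^{n}$ exactly as in the proof of Theorem~\ref{theorem-2-cocycle} gives $\bar{\mu}^{n}_{\alpha,\,\beta}=\mu^{n}_{\alpha,\,\beta}-\delta^{1}_{\mathrm{Alg}}(\psi^{n})_{\alpha,\,\beta}=0$ and $\bar{R}^{n}_{\omega}=R^{n}_{\omega}+\Phi^{1}(\psi^{n})_{\omega}=0$, while the normalization $\psi^{t}=\mathrm{id}+O(t^{n})$ ensures that no term of order below $n$ is disturbed. Hence the deformation becomes equivalent to one agreeing with the trivial deformation up to order $n$, advancing the induction.

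Finally I would iterate: composing the transformations $\psi^{t,(n)}=\mathrm{id}+t^{n}\psi^{n}$ for $n=1,2,\dots$ gives a chain of equivalences pushing the discrepancy to ever higher order. Because each factor is $\mathrm{id}+O(t^{n})$, the infinite composite converges in the $t$-adic topology to a single formal isomorphism $\Psi^{t}$ with $\Psi^{t}$ carrying $(\mu^{t},R^{t})$ to $(\mu,R)$, so $A$ is rigid. The step I expect to be the main obstacle is the inductive cocycle claim: carefully extracting the $t^{n}$-coefficient of~(\ref{*-1})--(\ref{*-2}) and verifying that, under the inductive hypothesis, it reduces precisely to the two equations defining a $2$-cocycle of $d^{\bullet}$ — this is where the associativity constraint and the Rota-Baxter family constraint must be reconciled, through the explicit shapes of $\delta_{\mathrm{Alg}}^{\bullet}$, $\Phi^{\bullet}$ and $\partial^{\bullet}$ from Definition~\ref{diffof-RBFbihomOassoalg}. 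A secondary point needing care is the $t$-adic convergence of the infinite composition of equivalences.
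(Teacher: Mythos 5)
Your proposal is correct and follows essentially the same route as the paper: show the leading coefficient $(\mu^{n},R^{n})$ is a $2$-cocycle, use $\mathrm{H}^{2}_{\mathrm{RBFA}_{\lambda}}(A)=0$ to write it as a coboundary, strip it off with the formal isomorphism $\mathrm{id}\pm t^{n}\psi^{n}$, and iterate; your version merely makes explicit the induction and the $t$-adic convergence that the paper compresses into ``by repeating this process''. The one detail you elide is that a $1$-cochain in $\mathrm{C}^{1}_{\mathrm{RBFA}_{\lambda}}(A)$ is a pair $(\phi,x)$ with $x\in\mathrm{C}^{0}_{\mathrm{RBF}_{\lambda}}(A)$, so the general coboundary is $\bigl(\delta_{\mathrm{Alg}}^{1}(\phi),-\partial^{0}(x)-\Phi^{1}(\phi)\bigr)$ rather than $\bigl(\delta_{\mathrm{Alg}}^{1}(\psi^{n}),-\Phi^{1}(\psi^{n})\bigr)$; to obtain the single family $\psi^{n}$ you use, one must set $\psi^{n}=\phi+\delta_{\mathrm{Alg}}^{0}(x)$ and invoke the chain-map identity $\Phi^{1}\circ\delta_{\mathrm{Alg}}^{0}=\Phi^{0}\circ\partial^{0}=\partial^{0}$, exactly as the paper does.
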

\begin{proof}
	Let $(\mu_{\alpha,\,\beta}^t,R_{\omega}^t)_{\alpha,\,\beta,\,\omega\in \Omega}$ be a 1-parameter formal deformation of Rota-Baxter family BiHom-$\Omega$-associative algebra $(A,\mu_{\alpha,\,\beta},R_{\omega},p_{\omega},q_{\omega})_{\alpha,\,\beta,\,\omega\in \Omega}$. By Proposition~\ref{RBFBHO-mu1-2cocycle}, we know that $(\mu_{\alpha,\,\beta}^1, R_{\omega}^1)_{\alpha,\,\beta,\,\omega\in \Omega}$ is a 2-cocycle, so we get $(\mu_{\alpha,\,\beta}^1,R_{\omega}^1)_{\alpha,\,\beta,\,\omega\in \Omega}\in Ker(d^2)$. Then by $\mathrm{H}^2_{\mathrm{RBFA}_{\lambda}}(A)=0$, that is $Ker(d^2)=Im(d^1)$. So, we have $(\mu_{\alpha,\,\beta}^1,R_{\omega}^1)_{\alpha,\,\beta,\,\omega\in \Omega}\in Im(d^1)$, i.e. there exists a 1-cochain $(\phi_{\alpha},x)_{\alpha\in\Omega}\in \mathrm{C}_{\mathrm{RBFA}_{\lambda}}^1(A)$ such that
	\[(\mu_{\alpha,\,\beta}^1,R_{\omega}^1)=d^1(\phi,x)_{\alpha,\,\beta,\,\omega}=\big(\delta_{\mathrm{Alg}}^1(\phi)_{\alpha,\,\beta},-\partial^0(x)_{\omega}-\Phi^1(\phi)_{\omega}\big),\quad \text{for all }\alpha,\,\beta,\,\omega\in\Omega.\]
	Let $\psi_{\alpha}^1=\phi_{\alpha}+\delta_{\mathrm{Alg}}^0(x)$, for all $\alpha\in\Omega$. Owing to $\delta_{\mathrm{Alg}}^1\circ \delta_{\mathrm{Alg}}^0=0$ and $\Phi^1\circ \delta_{\mathrm{Alg}}^0=\Phi^0\circ \partial^0=id\circ \partial^0=\partial^0$, we have $\mu_{\alpha,\,\beta}^1=\delta_{\mathrm{Alg}}^1(\psi_{\alpha}^1)=(\delta_{\mathrm{Alg}}^1(\psi^1))_{\alpha,\,\beta}$ and $R_{\omega}^1=-\Phi^1(\psi_{\omega}^1).$ We set $\psi_{\alpha}^t=id_{A}-t\psi_{\alpha}^1$ and define
	\[\bar{\mu}_{\alpha,\,\beta}^t=(\psi_{\alpha\,\beta}^t)^{-1}\circ\mu_{\alpha,\,\beta}^t\circ(\psi_{\alpha}^t\otimes \psi_{\beta}^t),\]
	\[\bar{R}_{\omega}^t=(\psi_{\omega}^t)^{-1}\circ R_{\omega}^t\circ \psi_{\omega}^t.\]
	According to $(\psi_{\alpha}^{t})_{\alpha\in\Omega}$ is commutative with $(p_{\omega})_{\omega\in\Omega},\,(q_{\omega})_{\omega\in\Omega}$, we get that $(\mu_{\alpha,\,\beta}^t,R_{\omega}^t)_{\alpha,\,\beta,\,\omega\in \Omega}$ is equivalent to the deformation $(\bar{\mu}_{\alpha,\,\beta}^t,\bar{R}_{\omega}^t)_{\alpha,\,\beta,\,\omega\in \Omega}$. Furthermore,
	\begin{align*}
		\bar{\mu}_{\alpha,\,\beta}^t(a,b)
		=&(\psi_{\alpha\,\beta}^t)^{-1}\circ\mu_{\alpha,\,\beta}^t\circ(\psi_{\alpha}^t\otimes\psi_{\beta}^t)(a,b)\hspace{1cm}(\text{mod }t^2)\\
		=&(id_{A}+t\psi_{\alpha\,\beta}^1)\circ(\mu_{\alpha,\,\beta}+t\mu_{\alpha,\,\beta}^1)\circ\big((id_{A}-t\psi_{\alpha}^1)\otimes (id_{A}-t\psi_{\beta}^1)\big)(a,b)\hspace{1cm}(\text{mod }t^2)\\
		=&\mu_{\alpha,\,\beta}(a,b)+t\big(\psi_{\alpha\,\beta}^1\mu_{\alpha,\,\beta}(a,b)+\mu_{\alpha,\,\beta}^1(a,b)-\mu_{\alpha,\,\beta}(\psi_{\alpha}^1(a),\,b)-\mu_{\alpha,\,\beta}(a,\,\psi_{\beta}^1(b))\big)\\
		=&\mu_{\alpha,\,\beta}(a,b)+t\big(\psi_{\alpha\,\beta}^1\mu_{\alpha,\,\beta}(a,b)+(\delta_{\mathrm{Alg}}^1\psi^1)_{\alpha,\,\beta}(a,b)-\mu_{\alpha,\,\beta}(\psi_{\alpha}^1(a),\,b)-\mu_{\alpha,\,\beta}(a,\,\psi_{\beta}^1(b))\big)\\
		&\hspace{1cm}(\text{by $\mu_{\alpha,\,\beta}^1=\big(\delta_{\mathrm{Alg}}^1(\psi^1)\big)_{\alpha,\,\beta}$})\\
		=&\mu_{\alpha,\,\beta}(a,b)+t\big(\psi_{\alpha\,\beta}^1\mu_{\alpha,\,\beta}(a,b)+\mu_{\alpha,\,\beta}(a,\,\psi_{\beta}^1(b))-\psi_{\alpha\,\beta}^{1}\mu_{\alpha,\,\beta}(a,b)+\mu_{\alpha,\,\beta}(\psi_{\alpha}^{1}(a),\,b)\\
		&-\mu_{\alpha,\,\beta}(\psi_{\alpha}^1(a),\,b)-\mu_{\alpha,\,\beta}(a,\,\psi_{\beta}^1(b))\big)\hspace{1cm}(\text{by Eq.~(\ref{BiHomOmHochschdiff})})\\
		=&\mu_{\alpha,\,\beta}(a,b).
	\end{align*}
Similarly, we get $\bar{R}_{\omega}^t=R_{\omega}.$ So, we get $(\bar{\mu}_{\alpha,\,\beta}^1)_{\alpha,\,\beta\in\Omega}=0,\,(\bar{R}_{\omega}^1)_{\omega\in\Omega}=0$. Thus, the coefficient of $t$ in the formal expression of $(\bar{\mu}_{\alpha,\,\beta}^t,\bar{R}_{\omega}^t)_{\alpha,\,\beta,\,\omega\in\Omega}$ vanishes. By repeating this process, we obtain that the deformation $(\mu_{\alpha,\,\beta}^t,R_{\omega}^t)_{\alpha,\,\beta,\,\omega\in \Omega}$ is equivalent to $(\mu_{\alpha,\,\beta},R_{\omega})_{\alpha,\,\beta,\,\omega\in \Omega}$. Hence, $(A,\mu_{\alpha,\,\beta},R_{\omega},p_{\omega},q_{\omega})_{\alpha,\,\beta,\,\omega\in \Omega}$ is rigid. This completes the proof.
	\end{proof}

\section{Abelian extensions of Rota-Baxter family BiHom-$\Omega$-associative algebras}\label{sec5}
In this section, we mainly study the abelian extensions of Rota-Baxter family BiHom-$\Omega$-associative algebras. We show that the cohomology $\mathrm{H}_{\mathrm{RBFA}_{\lambda}}^2(A,M)$ can be interpreted as equivalence classes of abelian extensions of Rota-Baxter family BiHom-$\Omega$-associative algebras.

{\em Convention:} In this section, let $(A,\mu_{\alpha,\,\beta},R_{\omega},p_{\omega}^A,q_{\omega}^A)_{\alpha,\,\beta,\,\omega\in \Omega}$ and $(M,\mu_{\alpha,\,\beta}^M,T_{\omega},p_{\omega}^M,q_{\omega}^M)_{\alpha,\,\beta,\,\omega\in \Omega}$ be two Rota-Baxter family BiHom-$\Omega$-associative algebras, where $\mu_{\alpha,\,\beta}^M:=0$ for any $\alpha,\,\beta\in \Omega$. That is to say, $(M,T_{\omega},p_{\omega},q_{\omega})_{\omega\in\Omega}$ is a trivial Rota-Baxter family BiHom-$\Omega$-associative algebra.

\begin{defn}\label{abel-ext}
	An \textbf{abelian extension} of Rota-Baxter family BiHom-$\Omega$-associative algebras is a short exact sequence of Rota-Baxter family BiHom-$\Omega$-associative algebras
	\[0\longrightarrow(M,0,T_{\omega},p_{\omega}^M,q_{\omega}^M)_{\omega\in \Omega}\overset{i_{\alpha}}{\longrightarrow}(E,\mu_{\alpha,\,\beta}^E,T_{\omega}^E,p_{\omega}^E,q_{\omega}^E)_{\alpha,\,\beta,\,\omega\in \Omega}\overset{\rho_{\alpha}}{\longrightarrow}(A,\mu_{\alpha,\,\beta},R_{\omega},p_{\omega}^A,q_{\omega}^A)_{\alpha,\,\beta,\,\omega\in \Omega}\longrightarrow 0,\]
	that is, satisfying
	\begin{align}
		i_{\alpha}\circ p_{\alpha}^M=p_{\alpha}^E\circ i_{\alpha},\quad i_{\alpha}\circ q_{\alpha}^M=q_{\alpha}^E\circ i_{\alpha},\label{ip=pi}
	\end{align}
	\[\rho_{\alpha}\circ p_{\alpha}^E=p_{\alpha}^A\circ\rho_{\alpha},\quad \rho_{\alpha}\circ q_{\alpha}^E=q_{\alpha}^A\circ \rho_{\alpha},\] \[\rho_{\alpha\,\beta}\circ\mu_{\alpha,\,\beta}^E=\mu_{\alpha,\,\beta}(\rho_{\alpha}\otimes \rho_{\beta}),\quad \text{ for all }\alpha,\,\beta\in \Omega,\] and there exists a commutative diagram:
\begin{align}\label{iT=Ti}
		\xymatrix{
	0\ar[r] &M\ar[d]^{T_{\alpha}}\ar[r]^{i_{\alpha}}&E\ar[d]^{T_{\alpha}^E}\ar[r]^{\rho_{\alpha}}&A\ar[d]^{R_{\alpha}}\ar[r]&0\\
	0\ar[r]&M\ar[r]_{i_{\alpha}}&E\ar[r]_{\rho_{\alpha}}&A\ar[r]&0.}
\end{align}
In this case, we call $(E,\mu_{\alpha,\,\beta}^E,T_{\omega}^E,p_{\omega}^E,q_{\omega}^E)_{\alpha,\,\beta,\,\omega\in \Omega}$ an abelian extension of Rota-Baxter family BiHom-$\Omega$-associative algebra $(A,\mu_{\alpha,\,\beta},R_{\omega},p_{\omega}^A,q_{\omega}^A)_{\alpha,\,\beta,\,\omega\in \Omega}$ by $(M,T_{\omega},p_{\omega}^M,q_{\omega}^M)_{\omega\in \Omega}$.
\end{defn}

A \textbf{section} of an abelian extension $(E,\mu_{\alpha,\,\beta}^E,T_{\omega}^E,p_{\omega}^E,q_{\omega}^E)_{\alpha,\,\beta,\,\omega\in \Omega}$ of $(A,\mu_{\alpha,\,\beta},R_{\omega},p_{\omega}^A,q_{\omega}^A)_{\alpha,\,\beta,\,\omega\in \Omega}$ by $(M,T_{\omega},p_{\omega}^M,q_{\omega}^M)_{\omega\in \Omega}$ is a family of linear maps $( s_{\alpha})_{\alpha\in \Omega}: A\rightarrow E$ satisfying
\begin{align}\label{section}
	p_{\alpha}^E\circ s_{\alpha}=s_{\alpha}\circ p_{\alpha}^A,\quad q_{\alpha}^E\circ s_{\alpha}=s_{\alpha}\circ q_{\alpha}^A,\quad \rho_{\alpha}\circ s_{\alpha}=id_{A},
\end{align}
for all $\alpha\in \Omega$.

Let $(E,\mu_{\alpha,\,\beta}^E,T_{\omega}^E,p_{\omega}^E,q_{\omega}^E)_{\alpha,\,\beta,\,\omega\in \Omega}$ be an abelian extension of $(A,\mu_{\alpha,\,\beta},R_{\omega},p_{\omega}^A,q_{\omega}^A)_{\alpha,\,\beta,\,\omega\in \Omega}$ by $(M,T_{\omega},\\p_{\omega}^M,q_{\omega}^M)_{\omega\in \Omega}$ and let $( s_{\alpha})_{\alpha\in \Omega}: A\rightarrow E$ be a section of $E$. We define the actions $ (\rhd_{\alpha,\,\beta})_{\alpha,\,\beta\in\Omega}: A\otimes M\rightarrow M $ and $(\lhd_{\alpha,\,\beta})_{\alpha,\,\beta\in\Omega}: M\otimes A\rightarrow M$ by
\[a\rhd_{\alpha,\,\beta}m:=\mu_{\alpha,\,\beta}^E\big(s_{\alpha}(a),i_{\beta}(m)\big),\quad m\lhd_{\alpha,\,\beta}a:=\mu_{\alpha,\,\beta}^E\big(i_{\alpha}(m),s_{\beta}(a)\big),\]
for all $a\in A,\, m\in M,\, \alpha,\,\beta\in \Omega$.

Next, we show that an abelian extension induces a bimodule structure by actions $( \rhd_{\alpha,\,\beta})_{\alpha,\,\beta\in \Omega}$ and $( \rhd_{\alpha,\,\beta})_{\alpha,\,\beta\in\Omega}$.

\begin{prop}\label{ext-prop1}
	Under the above actions, $(M,\rhd_{\alpha,\,\beta},\lhd_{\alpha,\,\beta},T_{\omega},p_{\omega}^M,q_{\omega}^M)_{\alpha,\,\beta,\,\omega\in \Omega}$ is a Rota-Baxter family BiHom-$\Omega$-bimodule over Rota-Baxter family BiHom-$\Omega$-associative algebra $(A,\mu_{\alpha,\,\beta},R_{\omega},p_{\omega}^A,\\q_{\omega}^A)_{\alpha,\,\beta,\,\omega\in \Omega}$.
\end{prop}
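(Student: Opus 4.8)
The plan is to identify $M$ with its image $i(M)\subseteq E$, which is a two-sided ideal, and to deduce every bimodule and Rota-Baxter-bimodule axiom from the corresponding structural identity on $E$. Since $(i_\alpha)_{\alpha\in\Omega}$ is a homomorphism of BiHom-$\Omega$-associative algebras and the product $\mu^M_{\alpha,\,\beta}$ on $M$ is zero, the product in $E$ of any two elements of $i(M)$ vanishes: $\mu_{\alpha,\,\beta}^E(i_\alpha(m),i_\beta(m'))=i_{\alpha\beta}(\mu^M_{\alpha,\,\beta}(m,m'))=0$. This single fact absorbs all the error terms below. Throughout I suppress $i$ and read the actions as $a\rhd_{\alpha,\,\beta}m=\mu_{\alpha,\,\beta}^E(s_\alpha(a),i_\beta(m))$ and $m\lhd_{\alpha,\,\beta}a=\mu_{\alpha,\,\beta}^E(i_\alpha(m),s_\beta(a))$, noting these land in $\ker\rho_{\alpha\beta}=i(M)$ since $\rho$ is multiplicative and $\rho\circ i=0$, so the operations are well defined. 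The required commutations $p^M_\alpha\circ T_\alpha=T_\alpha\circ p^M_\alpha$ and $q^M_\alpha\circ T_\alpha=T_\alpha\circ q^M_\alpha$ are inherited from the hypothesis that $(M,T_\omega,p^M_\omega,q^M_\omega)$ is a Rota-Baxter family BiHom-$\Omega$-associative algebra.

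First I would record the compatibilities that control the section. From Eq.~(\ref{section}) we have $s_\alpha\circ p^A_\alpha=p^E_\alpha\circ s_\alpha$ and $s_\alpha\circ q^A_\alpha=q^E_\alpha\circ s_\alpha$, and from Eq.~(\ref{ip=pi}) together with the left square of diagram~(\ref{iT=Ti}) the maps $p^M$, $q^M$, $T$ are exactly the restrictions of $p^E$, $q^E$, $T^E$ to $i(M)$. Two discrepancy terms govern everything. Because $\rho$ is multiplicative, $\mu_{\alpha,\,\beta}^E(s_\alpha(x),s_\beta(y))-s_{\alpha\beta}(x\cdot_{\alpha,\,\beta}y)\in i(M)$; and from the right square of diagram~(\ref{iT=Ti}) one has $\rho_\alpha\circ T^E_\alpha=R_\alpha\circ\rho_\alpha$, whence $T^E_\alpha(s_\alpha(a))-s_\alpha(R_\alpha(a))\in i(M)$. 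In each verification these discrepancies appear multiplied by an element of $i(M)$, and hence vanish.

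Next I would verify the bimodule axioms (\ref{lmod-1})--(\ref{bimod}). The multiplicativity conditions (\ref{lmod-1})--(\ref{lmod-2}) and (\ref{rmod-1})--(\ref{rmod-2}) follow immediately from the multiplicativity Eq.~(\ref{eqalfabeta}) of $E$ together with the section and $i$ compatibilities. For the module associativity (\ref{lmod}) I expand both sides through $\rhd$, apply BiHom-$\Omega$-associativity Eq.~(\ref{eqasso}) of $E$ to $p^E_\alpha(s_\alpha(x))\cdot_{\alpha,\,\beta\gamma}(s_\beta(x')\cdot_{\beta,\,\gamma}i_\gamma(m))$, and then replace $\mu_{\alpha,\,\beta}^E(s_\alpha(x),s_\beta(x'))$ by $s_{\alpha\beta}(x\cdot_{\alpha,\,\beta}x')$; the replacement is legitimate because the two differ by an element of $i(M)$ that is subsequently multiplied by $i_\gamma(q^M_\gamma(m))\in i(M)$ and drops out. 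Equations (\ref{rmod}) and (\ref{bimod}) follow by the symmetric computation.

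Finally I would establish the Rota-Baxter family bimodule identities (\ref{RBbimodulefamily-1})--(\ref{RBbimodulefamily-2}). For (\ref{RBbimodulefamily-1}), write $R_\alpha(a)\rhd_{\alpha,\,\beta}T_\beta(m)=\mu_{\alpha,\,\beta}^E(s_\alpha(R_\alpha(a)),T^E_\beta(i_\beta(m)))$; replacing $s_\alpha(R_\alpha(a))$ by $T^E_\alpha(s_\alpha(a))$ costs a term in $i(M)$ multiplied by $T^E_\beta(i_\beta(m))\in i(M)$, which vanishes, yielding $\mu_{\alpha,\,\beta}^E(T^E_\alpha(s_\alpha(a)),T^E_\beta(i_\beta(m)))$. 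Applying the Rota-Baxter relation Eq.~(\ref{RBfbihom}) on $E$ with $u=s_\alpha(a)$, $v=i_\beta(m)$ and unwinding the three resulting summands (using the same absorption on the $T^E(s)$-versus-$s(R)$ discrepancy) reproduces the right-hand side of (\ref{RBbimodulefamily-1}) after identifying $T^E_{\alpha\beta}\vert_{i(M)}$ with $T_{\alpha\beta}$; identity (\ref{RBbimodulefamily-2}) is symmetric. The one genuinely delicate point, and the place to be careful, is the bookkeeping of the discrepancy terms: each time one must confirm that the factor they meet actually lies in $i(M)$, so that the vanishing of $M$-$M$ products applies. Once this is checked the remaining manipulations are routine.
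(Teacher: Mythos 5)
Your proposal is correct and follows essentially the same route as the paper's proof: both hinge on the two discrepancy terms $\mu^E_{\alpha,\,\beta}(s_\alpha(a),s_\beta(b))-s_{\alpha\beta}\mu_{\alpha,\,\beta}(a,b)\in i(M)$ and $T^E_\alpha s_\alpha(a)-s_\alpha R_\alpha(a)\in i(M)$, absorbed by the vanishing of products of elements of $i(M)$ (since $\mu^M=0$), with the bimodule axioms pulled back from Eqs.~(\ref{eqalfabeta})--(\ref{eqasso}) on $E$ and the Rota-Baxter bimodule identities from Eq.~(\ref{RBfbihom}) on $E$ together with diagram~(\ref{iT=Ti}). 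Your write-up merely makes explicit some steps the paper leaves as ``similarly'' (well-definedness of the actions into $\ker\rho$, and the unwinding of Eq.~(\ref{RBfbihom}) for Eqs.~(\ref{RBbimodulefamily-1})--(\ref{RBbimodulefamily-2})).
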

\begin{proof}
	For any $a,b,c\in A,\; \alpha,\,\beta,\,\gamma\in \Omega,\; m\in M$, owing to $\rho_{\alpha}\circ s_{\alpha}=id_{A}$, we have
	\begin{align*}
		\rho&_{\alpha\,\beta}\Big(s_{\alpha\,\beta}\mu_{\alpha,\,\beta}(a,b)-\mu_{\alpha,\,\beta}^E\big(s_{\alpha}(a),a_{\beta}(b)\big)\Big)\\
		=&\rho_{\alpha\,\beta}s_{\alpha\,\beta}\mu_{\alpha,\,\beta}(a,b)-\mu_{\alpha,\,\beta}\big(\rho_{\alpha}s_{\alpha}(a),\rho_{\beta}s_{\beta}(b)\big)\\
		=&\mu_{\alpha,\,\beta}(a,b)-\mu_{\alpha,\,\beta}(a,b)=0,
	\end{align*}
then we get $s_{\alpha\,\beta}\mu_{\alpha,\,\beta}(a,b)-\mu_{\alpha,\,\beta}^E\big(s_{\alpha}(a),s_{\beta}(b)\big)\in M$. Similarly, we have $T_{\alpha}^Es_{\alpha}(a)-s_{\alpha}R_{\alpha}(a)\in M$. Furthermore, by $\mu_{\alpha,\,\beta}^M=0$, then we have
\[\mu_{\alpha\,\beta,\,\gamma}^E\big(s_{\alpha\,\beta}\mu_{\alpha,\,\beta}(a,b),\,i_{\gamma}(m)\big)=\mu_{\alpha\,\beta,\,\gamma}^E\Big(\mu_{\alpha,\,\beta}^E\big(s_{\alpha}(a),\,s_{\beta}(b)\big),i_{\gamma}(m)\Big).\]
Now, we prove Eq.~(\ref{lmod-1}).
\begin{align*}
	p_{\alpha\,\beta}^M(a\rhd_{\alpha,\,\beta}m)=&p_{\alpha\,\beta}^M\mu_{\alpha,\,\beta}^E\big(s_{\alpha}(a),i_{\beta}(m)\big)\\
	=&p_{\alpha\,\beta}^E\mu_{\alpha,\,\beta}^E\big(s_{\alpha}(a),i_{\beta}(m)\big)\\
	=&\mu_{\alpha,\,\beta}^E\big(p_{\alpha}^Es_{\alpha}(a),\,p_{\beta}^Ei_{\beta}(m)\big)\hspace{1cm}(\text{by Eq.~(\ref{eqalfabeta})})\\
	=&\mu_{\alpha,\,\beta}^E\big(s_{\alpha}p_{\alpha}^A(a),i_{\beta}p_{\beta}^M(m)\big)\hspace{1cm}(\text{by Eq.~(\ref{ip=pi}) and Eq.~(\ref{section})})\\
	=&p_{\alpha}^A(a)\rhd_{\alpha,\,\beta}p_{\beta}^M(m).
\end{align*}
Similarly, we get Eq.~(\ref{lmod-2}). Next, we check Eq.~(\ref{lmod}).
\begin{align*}
	\mu_{\alpha,\,\beta}(a,b)\rhd_{\alpha\,\beta,\,\gamma}q_{\gamma}^M(m)=&\mu_{\alpha\,\beta,\,\gamma}^E\big(s_{\alpha\,\beta}\mu_{\alpha,\,\beta}(a,b),i_{\gamma}q_{\gamma}^E(m)\big)=\mu_{\alpha\,\beta,\,\gamma}^E\Big(\mu_{\alpha,\,\beta}^E\big(s_{\alpha}(a),s_{\beta}(b)\big),i_{\gamma}q_{\gamma}^M(m)\Big)\\
	=&\mu_{\alpha\,\beta,\,\gamma}^E\Big(\mu_{\alpha,\,\beta}^E\big(s_{\alpha}(a),\,s_{\beta}(b)\big),\,q_{\gamma}^Ei_{\gamma}(m)\Big)\\
	=&\mu_{\alpha,\,\beta\,\gamma}^E\Big(p_{\alpha}^Es_{\alpha}(a),\,\mu_{\beta,\,\gamma}^E\big(s_{\beta}(b),\,i_{\gamma}(m)\big)\Big)\hspace{1cm}(\text{by Eq.~(\ref{eqasso})})\\
	=&\mu_{\alpha,\,\beta\,\gamma}^E\Big(s_{\alpha}p_{\alpha}^A(a),\,\mu_{\beta,\,\gamma}^E\big(s_{\beta}(b),\,i_{\gamma}(m)\big)\Big)\hspace{1cm}(\text{by Eq.~(\ref{section})})\\
	=&\mu_{\alpha,\,\beta\,\gamma}^E\big(s_{\alpha}p_{\alpha}^A(a),\, b\rhd_{\beta,\,\gamma}m\big)\\
	=&\mu_{\alpha,\,\beta\,\gamma}^E\big(s_{\alpha}p_{\alpha}^A(a),\,i_{\beta\,\gamma}( b\rhd_{\beta,\,\gamma}m)\big)\\
	=&p_{\alpha}^A(a)\rhd_{\alpha,\,\beta\,\gamma}(b\rhd_{\beta,\,\gamma}m).
\end{align*}
So we get that $(M,\rhd_{\alpha,\,\beta},p_{\omega}^M,q_{\omega}^M)_{\alpha,\,\beta,\,\omega\in \Omega}$ is a left module over $A$. By the same way, we further obtain that $(M,\rhd_{\alpha,\,\beta},\lhd_{\alpha,\,\beta},p_{\omega}^M,q_{\omega}^M)_{\alpha,\,\beta,\,\omega\in \Omega}$ is a bimodule over $A$. Since $(M,T_{\omega},p_{\omega}^M,q_{\omega}^M)_{\omega\in\Omega}$ is a trivial Rota-Baxter family BiHom-$\Omega$-associative algebra, we get 
\[T_{\alpha}\circ p_{\alpha}^M=p_{\alpha}^M\circ T_{\alpha},\; T_{\alpha}\circ q_{\alpha}^M=q_{\alpha}^M\circ T_{\alpha}.\]
Then by Eq.~(\ref{iT=Ti}) and $T_{\alpha}^Es_{\alpha}(a)-s_{\alpha}R_{\alpha}(a)\in M$, we obtain that Eqs.~(\ref{RBbimodulefamily-1})-(\ref{RBbimodulefamily-2}) hold. Thus, $(M,\rhd_{\alpha,\,\beta},\lhd_{\alpha,\,\beta}, T_{\omega}, p_{\omega}^M,q_{\omega}^M)_{\alpha,\,\beta,\,\omega\in \Omega}$ is a Rota-Baxter family BiHom-$\Omega$-bimodule over $A$. This completes the proof.
\end{proof}

Inspired by Proposition~\ref{ext-prop1}, we define $( \psi_{\alpha,\,\beta})_{\alpha,\,\beta\in \Omega}: A\otimes A\rightarrow M$ and $( \chi_{\omega})_{\omega\in \Omega}: A\rightarrow M$ by
\begin{align}
	\psi_{\alpha,\,\beta}(a,b):=&\mu_{\alpha,\,\beta}^E\big(s_{\alpha}(a),\, s_{\beta}(b)\big)-s_{\alpha\,\beta}\mu_{\alpha,\,\beta}(a,b),\label{def-eq-psi}\\
	\chi_{\omega}(a):=&T_{\omega}^Es_{\omega}(a)-s_{\omega}R_{\omega}(a),\label{def-eq-chi}
\end{align}
for all $a,b\in A,\,\alpha,\,\beta,\,\omega\in \Omega$. Then we have the following results.

\begin{prop}\label{prop-psichi-2-cocycle}
	The pair $(\psi_{\alpha,\,\beta}, \chi_{\omega})_{\alpha,\,\beta,\,\omega\in \Omega}$ is a 2-cocycle in the cochain complex $\mathrm{C}_{\mathrm{RBFA}_{\lambda}}^{2}(A,M)$.
\end{prop}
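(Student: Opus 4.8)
The plan is to verify the three conditions packaged in Remark~\ref{RBFA-2-cocycle}: first that $(\psi_{\alpha,\,\beta})_{\alpha,\,\beta\in\Omega}$ is a genuine element of $\mathrm{C}_{\Omega}^{2}(A,M)$ and $(\chi_{\omega})_{\omega\in\Omega}$ an element of $\mathrm{C}_{\Omega}^{1}(A,M)$; second that $\delta_{\mathrm{Alg}}^{2}(\psi)_{\alpha_{1},\alpha_{2},\alpha_{3}}=0$; and third that $-\partial^{1}(\chi)_{\beta_{1},\beta_{2}}=\Phi^{2}(\psi)_{\beta_{1},\beta_{2}}$. The guiding principle throughout is to lift each defining axiom of the Rota-Baxter family BiHom-$\Omega$-associative algebra $E$ through the section $(s_{\alpha})_{\alpha\in\Omega}$ and then read off the $M$-component, using the splitting $E=s(A)\oplus M$ furnished by $\rho_{\alpha}\circ s_{\alpha}=\mathrm{id}_{A}$. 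We already know from the proof of Proposition~\ref{ext-prop1} that $\psi_{\alpha,\,\beta}(a,b)$ and $\chi_{\omega}(a)$ land in $M$, so every identity below is to be interpreted inside $M$, with the shorthand $\mu_{\alpha,\beta}^{E}(s_{\alpha}a,s_{\beta}b)=s_{\alpha\beta}\mu_{\alpha,\beta}(a,b)+\psi_{\alpha,\beta}(a,b)$ and $T_{\omega}^{E}s_{\omega}=s_{\omega}R_{\omega}+\chi_{\omega}$.

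For the cochain conditions I would use the multiplicativity of $\mu^{E}$ (Eq.~\ref{eqalfabeta} applied in $E$) together with $p_{\alpha}^{E}s_{\alpha}=s_{\alpha}p_{\alpha}^{A}$, $q_{\alpha}^{E}s_{\alpha}=s_{\alpha}q_{\alpha}^{A}$ (Eq.~\ref{section}) and $p_{\alpha}^{E}i_{\alpha}=i_{\alpha}p_{\alpha}^{M}$, $q_{\alpha}^{E}i_{\alpha}=i_{\alpha}q_{\alpha}^{M}$ (Eq.~\ref{ip=pi}): applying $p_{\alpha\beta}^{E}$ to the expansion of $\mu_{\alpha,\beta}^{E}(s_{\alpha}a,s_{\beta}b)$ and comparing with the same expression on $p^{A}$-twisted arguments yields $p_{\alpha\beta}^{M}\circ\psi_{\alpha,\beta}=\psi_{\alpha,\beta}\circ(p_{\alpha}^{A}\otimes p_{\beta}^{A})$, and likewise for $q$. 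The same argument applied to $\chi$, now invoking $p_{\alpha}^{E}T_{\alpha}^{E}=T_{\alpha}^{E}p_{\alpha}^{E}$, the commutation $p_{\alpha}^{A}R_{\alpha}=R_{\alpha}p_{\alpha}^{A}$ of Eq.~\ref{pR=Rp}, and the section relations, places $\chi$ in $\mathrm{C}_{\Omega}^{1}(A,M)$.

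For the second condition I apply the BiHom-$\Omega$-associativity Eq.~\ref{eqasso} of $\mu^{E}$ to the lifted triple $s_{\alpha}(a),s_{\beta}(b),s_{\gamma}(c)$. Substituting the expansion of $\mu^{E}(s,s)$, together with $p_{\alpha}^{E}s_{\alpha}=s_{\alpha}p_{\alpha}^{A}$ and $q_{\gamma}^{E}s_{\gamma}=s_{\gamma}q_{\gamma}^{A}$, and recognizing the mixed products $\mu_{\alpha,\beta\gamma}^{E}(s_{\alpha}p_{\alpha}^{A}a,\psi_{\beta,\gamma}(b,c))=p_{\alpha}^{A}(a)\rhd_{\alpha,\beta\gamma}\psi_{\beta,\gamma}(b,c)$ and $\mu_{\alpha\beta,\gamma}^{E}(\psi_{\alpha,\beta}(a,b),s_{\gamma}q_{\gamma}^{A}c)=\psi_{\alpha,\beta}(a,b)\lhd_{\alpha\beta,\gamma}q_{\gamma}^{A}(c)$ from the definitions of $\rhd,\lhd$, the $s(A)$-component reproduces associativity in $A$ and cancels, while the $M$-component is precisely Eq.~\ref{cocycle} for $\psi$, that is $\delta_{\mathrm{Alg}}^{2}(\psi)=0$.

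The third condition is the main obstacle. Here I would apply the Rota-Baxter relation Eq.~\ref{RBfbihom} in $E$, namely $\mu_{\alpha,\beta}^{E}(T_{\alpha}^{E}s_{\alpha}a,T_{\beta}^{E}s_{\beta}b)=T_{\alpha\beta}^{E}\bigl(\mu_{\alpha,\beta}^{E}(T_{\alpha}^{E}s_{\alpha}a,s_{\beta}b)+\mu_{\alpha,\beta}^{E}(s_{\alpha}a,T_{\beta}^{E}s_{\beta}b)+\lambda\mu_{\alpha,\beta}^{E}(s_{\alpha}a,s_{\beta}b)\bigr)$, and substitute $T_{\omega}^{E}s_{\omega}=s_{\omega}R_{\omega}+\chi_{\omega}$, the commutativity $T_{\alpha\beta}^{E}i_{\alpha\beta}=i_{\alpha\beta}T_{\alpha\beta}$ of Eq.~\ref{iT=Ti}, and the expansion of $\mu^{E}(s,s)$. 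After projecting onto $M$, the numerous terms must be regrouped: products of the form $\mu_{\alpha,\beta}^{E}(s_{\alpha}R_{\alpha}a,i_{\beta}(\cdots))$ become $R_{\alpha}(a)\rhd_{\alpha,\beta}(\cdots)$, each outer $T^{E}$ acting on an $M$-valued expression becomes $T_{\alpha\beta}(\cdots)$ by Eq.~\ref{iT=Ti}, and the remaining $\psi$-terms assemble into $\psi_{\alpha,\beta}(R_{\alpha}a,R_{\beta}b)$, $\psi_{\alpha,\beta}(R_{\alpha}a,b)$, $\psi_{\alpha,\beta}(a,R_{\beta}b)$ and $\lambda\psi_{\alpha,\beta}(a,b)$. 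Matching this against the explicit formula for $\Phi^{2}$ in Eq.~\ref{defof-Phi} and the $1$-cocycle description of $\partial^{1}$ then yields $\Phi^{2}(\psi)_{\alpha,\beta}=-\partial^{1}(\chi)_{\alpha,\beta}$. The difficulty is purely combinatorial bookkeeping: one must consistently separate the $s(A)$- and $M$-components at every step and correctly sort the grouped terms between $\Phi^{2}(\psi)$ and $\partial^{1}(\chi)$, since both carry $R$-twists of $\psi$ and outer $T$-corrections that look superficially alike.
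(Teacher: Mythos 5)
Your proposal is correct and follows essentially the same route as the paper: establish the cochain conditions from the multiplicativity of $\mu^{E}$, the section relations and Eq.~(\ref{pR=Rp}), then lift the BiHom-$\Omega$-associativity and the Rota-Baxter identity of $E$ through $(s_{\alpha})_{\alpha\in\Omega}$ and read off the $M$-components to obtain $\delta_{\mathrm{Alg}}^{2}(\psi)=0$ and $\partial^{1}(\chi)+\Phi^{2}(\psi)=0$. The paper writes out only the $\delta_{\mathrm{Alg}}^{2}(\psi)=0$ computation in full and dispatches the Rota-Baxter part with ``similarly,'' so your sketch of that third step is, if anything, more explicit than the published argument.
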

\begin{proof}
	For any $a,b,c\in A,\;\alpha,\,\beta,\,\gamma,\,\omega,\,\omega_{1}\in \Omega,$ by Eqs.~(\ref{eqalfabeta}),~(\ref{pR=Rp}) and Eqs.~(\ref{section})-(\ref{def-eq-chi}), we have
	\[p_{\alpha\,\beta}^M\circ \psi_{\alpha,\,\beta}=\psi_{\alpha,\,\beta}\circ(p_{\alpha}^A\otimes p_{\beta}^A),\quad p_{\omega}^M\circ \chi_{\omega}=\chi_{\omega}\circ p_{\omega}^A,\]
	\[q_{\alpha\,\beta}^M\circ \psi_{\alpha,\,\beta}=\psi_{\alpha,\,\beta}\circ(q_{\alpha}^A\otimes q_{\beta}^A),\quad q_{\omega}^M\circ \chi_{\omega}=\chi_{\omega}\circ q_{\omega}^A.\]
	With a simple calculation, we obtain $(\psi_{\alpha,\,\beta})_{\alpha,\,\beta\in\Omega}\in \mathrm{C}_{\Omega}^2(A,M),\; (\chi_{\omega})_{\omega\in\Omega}\in \mathrm{C}_{\mathrm{RBF}_{\lambda}}^1(A,M)$. By Definition~\ref{diffof-RBFbihomOassoalg}, we get \[d^2(\psi,\chi)_{\alpha,\,\beta,\,\gamma,\,\omega,\,\omega_{1}}=\big(\delta_{\mathrm{Alg}}^2(\psi)_{\alpha,\,\beta,\,\gamma},\, -\partial^1(\chi)_{\omega,\,\omega_{1}}-\Phi^2(\psi)_{\omega,\,\omega_{1}}\big).\] Now we are going to prove $\delta_{\mathrm{Alg}}^2(\psi)_{\alpha,\,\beta,\,\gamma}=0$.
	\begin{align*}
		&\delta_{\mathrm{Alg}}^2(\psi)_{\alpha,\,\beta,\,\gamma}(a,b,c)\\
		=&p_{\alpha}^A(a)\rhd_{\alpha,\,\beta\,\gamma}\psi_{\beta,\,\gamma}(b,c)-\psi_{\alpha\,\beta,\,\gamma}\big(\mu_{\alpha,\,\beta}(a,b),\, q_{\gamma}^A(c)\big)+\psi_{\alpha,\,\beta\,\gamma}\big(p_{\alpha}^A(a),\,\mu_{\beta,\,\gamma}(b,c)\big)-\psi_{\alpha,\,\beta}(a,b)\lhd_{\alpha\,\beta,\,\gamma}q_{\gamma}^A(c)\\
		=&p_{\alpha}^A(a)\rhd_{\alpha,\,\beta\,\gamma}\mu_{\beta,\,\gamma}^E\big(s_{\beta}(b),s_{\gamma}(c)\big)-p_{\alpha}^A(a)\rhd_{\alpha,\,\beta\,\gamma}s_{\beta\,\gamma}\mu_{\beta,\,\gamma}(b,c)-\mu_{\alpha\,\beta,\,\gamma}^E\big(s_{\alpha\,\beta}\mu_{\alpha,\,\beta}(a,b),\,s_{\gamma}q_{\gamma}^A(c)\big)\\
		&+s_{\alpha\,\beta\,\gamma}\mu_{\alpha\,\beta,\,\gamma}\big(\mu_{\alpha,\,\beta}(a,b),\,q_{\gamma}^A(c)\big)+\mu_{\alpha,\,\beta\,\gamma}^E\big(s_{\alpha}p_{\alpha}^A(a),\, s_{\beta\,\gamma}\mu_{\beta,\,\gamma}(b,c)\big)-s_{\alpha\,\beta\,\gamma}\mu_{\alpha,\,\beta\,\gamma}\big(p_{\alpha}^A(a),\,\mu_{\beta,\,\gamma}(b,c)\big)\\
		&-\mu_{\alpha,\,\beta}^E\big(s_{\alpha}(a),\,s_{\beta}(b)\big)\lhd_{\alpha\,\beta,\,\gamma}q_{\gamma}^A(c)+s_{\alpha\,\beta}\mu_{\alpha,\,\beta}(a,b)\lhd_{\alpha\,\beta,\,\gamma}q_{\gamma}^A(c)\\
		=&\mu_{\alpha,\,\beta\,\gamma}^E\Big(s_{\alpha}p_{\alpha}^A(a),\,\mu_{\beta,\,\gamma}^E\big(s_{\beta}(b),\, s_{\gamma}(c)\big)\Big)-\mu_{\alpha,\,\beta\,\gamma}^E\big(s_{\alpha}p_{\alpha}^A(a),\, s_{\beta\,\gamma}\mu_{\beta,\,\gamma}(b,c)\big)-\mu_{\alpha\,\beta,\,\gamma}^E\big(s_{\alpha\,\beta}\mu_{\alpha,\,\beta}(a,b),\,s_{\gamma}q_{\gamma}^A(c)\big)\\
		&+s_{\alpha\,\beta\,\gamma}\mu_{\alpha\,\beta,\,\gamma}\big(\mu_{\alpha,\,\beta}(a,b),\, q_{\gamma}^A(c)\big)+\mu_{\alpha,\,\beta\,\gamma}^E\big(s_{\alpha}p_{\alpha}^A(a),\, s_{\beta\,\gamma}\mu_{\beta,\,\gamma}(b,c)\big)-s_{\alpha\,\beta\,\gamma}\mu_{\alpha,\,\beta\,\gamma}\big(p_{\alpha}^A(a),\, \mu_{\beta,\,\gamma}(b,c)\big)\\
		&-\mu_{\alpha\,\beta,\,\gamma}^E\Big(\mu_{\alpha,\,\beta}^E\big(s_{\alpha}(a),\, s_{\beta}(b)\big),\, s_{\gamma}q_{\gamma}^A(c)\Big)+\mu_{\alpha\,\beta,\,\gamma}^E\big(s_{\alpha\,\beta}\mu_{\alpha,\,\beta}(a,b),\, s_{\gamma}q_{\gamma}^A(c)\big)\\
		=&\mu_{\alpha,\,\beta\,\gamma}^E\Big(p_{\alpha}^Es_{\alpha}(a),\, \mu_{\beta,\,\gamma}^E\big(s_{\beta}(b),\, s_{\gamma}(c)\big)\Big)-\mu_{\alpha\,\beta,\,\gamma}^E\Big(\mu_{\alpha,\,\beta}^E\big(s_{\alpha}(a),\, s_{\beta}(b)\big),\, q_{\gamma}^Es_{\gamma}(c)\Big)\hspace{1cm}(\text{by Eq.~(\ref{section})})\\
		=&0.\hspace{1cm}(\text{by Eq.~(\ref{eqasso})})
	\end{align*}
Similarly, we have $\partial^1(\chi)_{\omega,\,\omega_{1}}+\Phi^2(\psi)_{\omega,\,\omega_{1}}=0$. Thus,  $(\psi_{\alpha,\,\beta}, \chi_{\omega})_{\alpha,\,\beta,\,\omega\in \Omega}$ is a 2-cocycle.
\end{proof}

Next, we show that the definition of $\rhd_{\alpha,\,\beta},\lhd_{\alpha,\,\beta},\psi_{\alpha,\,\beta}$ and $\chi_{\omega}$ are independent of the choice of section $s_{\alpha}$, for all $\alpha,\,\beta,\,\omega\in \Omega$.
\begin{prop}
	\begin{enumerate}
		\item \label{it: bimod} Different sections give the same Rota-Baxter family BiHom-$\Omega$-bimodule structure on $(M,T_{\omega},p_{\omega}^M, q_{\omega}^M)_{\omega\in \Omega}$.
		\item \label{it: cohomo} The cohomological class of $(\psi_{\alpha,\,\beta}, \chi_{\omega})_{\alpha,\,\beta,\,\omega\in \Omega}$ is independent of the choice of sections.
	\end{enumerate}
\end{prop}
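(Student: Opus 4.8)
The plan is to exploit the fact that any two sections of the abelian extension differ by a $1$-cochain valued in $M$, and then to read off both assertions from the explicit coboundary formula of the complex $\mathrm{C}^{\bullet}_{\mathrm{RBFA}_{\lambda}}(A,M)$ from Definition~\ref{diffof-RBFbihomOassoalg}. First I would fix two sections $(s_{\alpha})_{\alpha\in\Omega}$ and $(s'_{\alpha})_{\alpha\in\Omega}$. Since $\rho_{\alpha}\circ s_{\alpha}=\mathrm{id}_{A}=\rho_{\alpha}\circ s'_{\alpha}$ by Eq.~(\ref{section}), the difference $s'_{\alpha}-s_{\alpha}$ lands in $\ker\rho_{\alpha}=\mathrm{im}\,i_{\alpha}$ by exactness, so there is a unique family $(\phi_{\alpha})_{\alpha\in\Omega}\colon A\to M$ with $s'_{\alpha}=s_{\alpha}+i_{\alpha}\circ\phi_{\alpha}$. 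Using Eq.~(\ref{ip=pi}), Eq.~(\ref{section}) and the injectivity of $i_{\alpha}$, I would check that $(\phi_{\alpha})_{\alpha\in\Omega}$ commutes with the structure maps, i.e. $p_{\alpha}^{M}\circ\phi_{\alpha}=\phi_{\alpha}\circ p_{\alpha}^{A}$ and $q_{\alpha}^{M}\circ\phi_{\alpha}=\phi_{\alpha}\circ q_{\alpha}^{A}$, so that $(\phi_{\alpha})_{\alpha\in\Omega}\in\mathrm{C}_{\Omega}^{1}(A,M)$.

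For part~\ref{it: bimod}, I would compute the new left action $a\rhd'_{\alpha,\,\beta}m=\mu_{\alpha,\,\beta}^{E}\big(s'_{\alpha}(a),i_{\beta}(m)\big)$ and expand it by bilinearity into $\mu_{\alpha,\,\beta}^{E}\big(s_{\alpha}(a),i_{\beta}(m)\big)+\mu_{\alpha,\,\beta}^{E}\big(i_{\alpha}\phi_{\alpha}(a),i_{\beta}(m)\big)$. The extra term vanishes: because the maps in the short exact sequence are homomorphisms of Rota-Baxter family BiHom-$\Omega$-associative algebras and $\mu^{M}=0$, one has $\mu_{\alpha,\,\beta}^{E}\big(i_{\alpha}(x),i_{\beta}(y)\big)=i_{\alpha\,\beta}\big(\mu_{\alpha,\,\beta}^{M}(x,y)\big)=0$. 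Hence $\rhd'=\rhd$, and the symmetric computation gives $\lhd'=\lhd$. Since $T,p^{M},q^{M}$ do not involve the section, the entire Rota-Baxter family BiHom-$\Omega$-bimodule structure produced by Proposition~\ref{ext-prop1} is unchanged.

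For part~\ref{it: cohomo}, I would substitute $s'_{\alpha}=s_{\alpha}+i_{\alpha}\phi_{\alpha}$ into the defining formulas (\ref{def-eq-psi}) and (\ref{def-eq-chi}), throughout identifying $M$ with $\mathrm{im}\,i$ as in Proposition~\ref{ext-prop1}. Expanding $\psi'_{\alpha,\,\beta}$, the quadratic-in-$\phi$ term vanishes as above, and the surviving cross terms assemble into $a\rhd_{\alpha,\,\beta}\phi_{\beta}(b)+\phi_{\alpha}(a)\lhd_{\alpha,\,\beta}b-\phi_{\alpha\,\beta}\big(\mu_{\alpha,\,\beta}(a,b)\big)$, which is exactly $\delta_{\mathrm{Alg}}^{1}(\phi)_{\alpha,\,\beta}(a,b)$ by Eq.~(\ref{BiHomOmHochschdiff}). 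Expanding $\chi'_{\omega}$ and using the commutativity $T_{\omega}^{E}\circ i_{\omega}=i_{\omega}\circ T_{\omega}$ furnished by the diagram~(\ref{iT=Ti}) gives $\chi'_{\omega}-\chi_{\omega}=T_{\omega}\phi_{\omega}-\phi_{\omega}R_{\omega}=-\Phi^{1}(\phi)_{\omega}$ by Eq.~(\ref{defof-Phi-1}). Comparing with Definition~\ref{diffof-RBFbihomOassoalg}, this reads $(\psi',\chi')-(\psi,\chi)=\big(\delta_{\mathrm{Alg}}^{1}(\phi),\,-\partial^{0}(0)-\Phi^{1}(\phi)\big)=d^{1}(\phi,0)$, a $2$-coboundary, so the two cocycles determine the same class in $\mathrm{H}_{\mathrm{RBFA}_{\lambda}}^{2}(A,M)$.

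The main obstacle I anticipate is the careful bookkeeping of the identification of $M$ with $\ker\rho_{\alpha}=\mathrm{im}\,i_{\alpha}$: the cochains $\psi$ and $\chi$ are $M$-valued, while the raw differences $\mu^{E}(s'_{\alpha}(a),s'_{\beta}(b))-s'_{\alpha\beta}\mu_{\alpha,\beta}(a,b)$ and $T^{E}_{\omega}s'_{\omega}-s'_{\omega}R_{\omega}$ live a priori in $E$, and one must keep track of the implicit $i^{-1}$ on $\mathrm{im}\,i$ throughout. Coupled with this is the need to align all signs in the expansion of $\psi'$ and $\chi'$ with the sign conventions of $\delta_{\mathrm{Alg}}^{1}$ and $\Phi^{1}$. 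Once the section-difference cochain $\phi$ is pinned down and shown to lie in $\mathrm{C}_{\Omega}^{1}(A,M)$, the remainder is a direct matching of terms against the already-established formula for $d^{1}$.
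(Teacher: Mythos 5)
Your proposal is correct and follows essentially the same route as the paper: you write the difference of sections as an $M$-valued $1$-cochain (the paper's $\eta_\alpha=s^1_\alpha-s^2_\alpha$), kill the extra terms in the actions using $\mu^M_{\alpha,\,\beta}=0$, and identify $(\psi'-\psi,\,\chi'-\chi)=\big(\delta^1_{\mathrm{Alg}}(\phi),\,-\Phi^1(\phi)\big)=d^1(\phi,0)$ exactly as in the paper's computation. The only difference is cosmetic: you make explicit the verification that the difference cochain lies in $\mathrm{C}^1_{\Omega}(A,M)$ and the bookkeeping of the identification $M\cong\mathrm{im}\,i_\alpha$, both of which the paper leaves implicit.
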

\begin{proof}
		\ref{it: bimod} We just prove the case of left module action $(\rhd_{\alpha,\,\beta})_{\alpha,\,\beta\in \Omega}$. The proof of right module action $(\lhd_{\alpha,\,\beta})_{\alpha,\,\beta\in \Omega}$ is similar. If $(s_{\alpha}^1)_{\alpha\in \Omega}$ and $( s_{\alpha}^2)_{\alpha\in \Omega}$ are different sections, then we have
		\[a\rhd_{\alpha,\,\beta}^1m:=\mu_{\alpha,\,\beta}^E\big(s_{\alpha}^1(a),\,i_{\beta}(m)\big),\quad a\rhd_{\alpha,\,\beta}^2m:=\mu_{\alpha,\,\beta}^E\big(s_{\alpha}^2(a),\, i_{\beta}(m)\big),\]
		for all $a\in A,\,m\in M,\;\alpha,\,\beta\in \Omega$. Now, we define a family of linear maps $( \eta_{\alpha})_{\alpha\in \Omega}: A\rightarrow M$ by
		\[\eta_{\alpha}(a):=s_{\alpha}^1(a)-s_{\alpha}^2(a),\quad \text{for all } a\in A,\;\alpha\in \Omega.\]
		Then by $\mu_{\alpha,\,\beta}^M=0$, we have 
		\begin{align*}
			a\rhd_{\alpha,\,\beta}^1m=&\mu_{\alpha,\,\beta}^E\big(s_{\alpha}^1(a),\, i_{\beta}(m)\big)=\mu_{\alpha,\,\beta}^E\big(\eta_{\alpha}(a)+s_{\alpha}^2(a),\, i_{\beta}(m)\big)\\
			=&\mu_{\alpha,\,\beta}^M\big(\eta_{\alpha}(a), m\big)+\mu_{\alpha,\,\beta}^E\big(s_{\alpha}^2(a),\, i_{\beta}(m)\big)\\
			=&a\rhd_{\alpha,\,\beta}^2m.
		\end{align*}
	Hence, different sections give the same left module structure on $M$. This completes the proof.
	
	\ref{it: cohomo} For any $a,b\in A,\; \alpha,\,\beta,\,\omega\in \Omega,$ here we continue to use the notation in~\ref{it: bimod}, for different sections $( s_{\alpha}^1)_{\alpha\in\Omega}$ and $( s_{\alpha}^2)_{\alpha\in\Omega}$, we define the corresponding $(\psi_{\alpha,\,\beta}^1, \chi_{\omega}^1)_{\alpha,\,\beta,\,\omega\in\Omega}$ and $(\psi_{\alpha,\,\beta}^2, \chi_{\omega}^2)_{\alpha,\,\beta,\,\omega\in\Omega}$ as follows:
	
	\centerline{$\psi_{\alpha,\,\beta}^1(a,b)=\mu_{\alpha,\,\beta}^E\big(s_{\alpha}^1(a),\, s_{\beta}^1(b)\big)-s_{\alpha\,\beta}^1\mu_{\alpha,\,\beta}(a,b),\quad \chi_{\omega}^1(a)=T_{\omega}^Es_{\omega}^1(a)-s_{\omega}^1R_{\omega}(a),$}
	\centerline{$\psi_{\alpha,\,\beta}^2(a,b)=\mu_{\alpha,\,\beta}^E\big(s_{\alpha}^2(a),\, s_{\beta}^2(b)\big)-s_{\alpha\,\beta}^2\mu_{\alpha,\,\beta}(a,b),\quad \chi_{\omega}^2(a)=T_{\omega}^Es_{\omega}^2(a)-s_{\omega}^2R_{\omega}(a).$}
	We are going to prove that $(\psi_{\alpha,\,\beta}^1, \chi_{\omega}^1)_{\alpha,\,\beta,\,\omega\in\Omega}-(\psi_{\alpha,\,\beta}^2,\chi_{\omega}^2)_{\alpha,\,\beta,\,\omega\in\Omega}\in Im(d^{1})$, we have
	\begin{align*}
		\psi_{\alpha,\,\beta}^1(a,b)-\psi_{\alpha,\,\beta}^2(a,b)=&\mu_{\alpha,\,\beta}^E\big(s_{\alpha}^1(a),\, s_{\beta}^1(b)\big)-s_{\alpha\,\beta}^1\mu_{\alpha,\,\beta}(a,b)-\mu_{\alpha,\,\beta}^E\big(s_{\alpha}^2(a),\, s_{\beta}^2(b)\big)+s_{\alpha\,\beta}^2\mu_{\alpha,\,\beta}(a,b)\\
		=&\mu_{\alpha,\,\beta}^E\big(\eta_{\alpha}(a)+s_{\alpha}^2(a),\, \eta_{\beta}(b)+s_{\beta}^2(b)\big)-\eta_{\alpha\,\beta}\mu_{\alpha,\,\beta}(a,b)-s_{\alpha\,\beta}^2\mu_{\alpha,\,\beta}(a,b)\\
		&-\mu_{\alpha,\,\beta}^E\big(s_{\alpha}^2(a),s_{\beta}^2(b)\big)+s_{\alpha\,\beta}^2\mu_{\alpha,\,\beta}(a,b)\\
		=&\mu_{\alpha,\,\beta}^E\big(\eta_{\alpha}(a),\, s_{\beta}^2(b)\big)+\mu_{\alpha,\,\beta}^E\big(s_{\alpha}^2(a),\, \eta_{\beta}(b)\big)-\eta_{\alpha\,\beta}\mu_{\alpha,\,\beta}(a,b)\\
		=&\eta_{\alpha}(a)\lhd_{\alpha,\,\beta}^2b+a\rhd_{\alpha,\,\beta}^2\eta_{\beta}(b)-\eta_{\alpha\,\beta}\mu_{\alpha,\,\beta}(a,b)\\
		=&\big(\delta_{\mathrm{Alg}}^1(\eta)\big)_{\alpha,\,\beta}(a,b)
	\end{align*}
Similarly, we get $\chi_{\omega}^1(a)-\chi_{\omega}^2(a)=-\big(\Phi^1(\eta)\big)_{\omega}(a)$. So we obtain that \[(\psi_{\alpha,\,\beta}^1,\chi_{\omega}^1)_{\alpha,\,\beta,\,\omega\in\Omega}-(\psi_{\alpha,\,\beta}^2,\chi_{\omega}^2)_{\alpha,\,\beta,\,\omega\in\Omega}=\big(\delta_{\mathrm{Alg}}^1(\eta))_{\alpha,\,\beta},\, -\Phi^1(\eta)_{\omega}\big)_{\alpha,\,\beta,\,\omega\in\Omega}\in Im(d^{1}).\] 
This completes the proof.
\end{proof}

\begin{defn}\label{def-isoext}
	Two abelian extensions are said to be \textbf{isomorphic} if there exists an isomorphism $\phi=(\phi_{\alpha})_{\alpha\in\Omega}: E\rightarrow E'$ on Rota-Baxter family BiHom-$\Omega$-associative algebras such that the following diagram commute:
	\begin{align*}
		\xymatrix{
				0\ar[r] &(M,T_{\omega}^M,p_{\alpha}^M,q_{\alpha}^M)_{\alpha,\,\omega\in \Omega}\ar@{=}[d]\ar[r]^{i_{\alpha}^1\quad}&(E,\mu_{\alpha,\,\beta}^E,T_{\omega}^E,p_{\alpha}^E,q_{\alpha}^E)_{\alpha,\,\beta,\,\omega\in \Omega}\ar[d]^{\phi_{\alpha}}\ar[r]^{\rho_{\alpha}^1}&(A,\mu_{\alpha,\,\beta},R_{\omega},p_{\alpha}^A,q_{\alpha}^A)_{\alpha,\,\beta,\,\omega\in \Omega}\ar@<.5ex>[l]^{s_{\alpha}^1}\ar@{=}[d]\ar[r]&0\\
				0\ar[r]&(M,T_{\omega}^M,p_{\alpha}^M,q_{\alpha}^M)_{\alpha,\,\omega\in \Omega}\ar[r]^{i_{\alpha}^2\quad}&(\bar{E},\bar{\mu}_{\alpha,\,\beta}^E,\bar{T}_{\omega}^E,\bar{p}_{\alpha}^E,\bar{q}_{\alpha}^E)_{\alpha,\,\beta,\,\omega\in \Omega}\ar[r]^{\rho_{\alpha}^2}&(A,\mu_{\alpha,\,\beta},R_{\omega},p_{\alpha}^A,q_{\alpha}^A)_{\alpha,\,\beta,\,\omega\in \Omega}\ar@<.5ex>[l]^{s_{\alpha}^2}\ar[r]&0.}
	\end{align*}
\end{defn}
\noindent Note that two extension with same $(i_{\alpha})_{\alpha\in\Omega}$ and $(\rho_{\alpha})_{\alpha\in\Omega}$ but different $(s_{\alpha})_{\alpha\in\Omega}$ are always isomorphic.

In fact, the section $( s_{\alpha})_{\alpha\in \Omega}$ determines the following splitting
\[\xymatrix{0\ar[r]&M\ar[r]^{i_{\alpha}}&E\ar@<.5ex>[l]^{t_{\alpha}}\ar[r]^{\rho_{\alpha}}&A\ar@<.5ex>[l]^{s_{\alpha}}\ar[r]&0},\]
where $t_{\alpha}\circ i_{\alpha}=\text{id}_{\text{M}},\; t_{\alpha}\circ s_{\alpha}=0$ and $i_{\alpha}\circ t_{\alpha}+s_{\alpha}\circ \rho_{\alpha}=id_{\text{E}}$ for all $\alpha\in \Omega$. By~\cite{RBA,RBFassoconformal}, there is an isomorphism of vector spaces:

\renewcommand{\atopwithdelims}[2]{%
	\genfrac{(}{)}{0pt}{}{#1}{#2}}
\[(\rho_{\alpha}, t_{\alpha}): E\cong A\oplus M : \atopwithdelims{s_{\alpha}}{i_{\alpha}}.\]
Thus, we will study the Rota-Baxter family BiHom-$\Omega$-associative algebra structure on $A\oplus M$, where $( \mu_{\alpha,\,\beta}^{\psi})_{\alpha,\,\beta\in\Omega}$, $( T_{\omega}^{\chi})_{\omega\in\Omega}$, $( p_{\omega})_{\omega\in\Omega}, (q_{\omega})_{\omega\in\Omega}$ are defined by
\begin{align}
	\mu_{\alpha,\,\beta}^{\psi}\big((a,m),\, (b,n)\big):=&\big(\mu_{\alpha,\,\beta}(a,b),\, a\rhd_{\alpha,\,\beta}n+m\lhd_{\alpha,\,\beta}b+\psi_{\alpha,\,\beta}(a,b)\big),\label{def-mu-psi}\\
	T_{\omega}^{\chi}(a,m):=&\big(R_{\omega}(a),\, \chi_{\omega}(a)+T_{\omega}^M(m)\big),\label{def-T-chi}\\
	p_{\omega}(a,m):=&\big(p_{\omega}^A(a),\, p_{\omega}^M(m)\big),\label{def-p}\\
	q_{\omega}(a,m):=&\big(q_{\omega}^A(a),\, q_{\omega}^M(m)\big),\label{def-q}
\end{align}
for all $(a,m),\, (b,n)\in A\oplus M,$ and $\alpha,\,\beta,\,\omega\in \Omega$. In particular, if $(\psi_{\alpha,\,\beta})_{\alpha,\,\beta\in\Omega}=0,\, (\chi_{\omega})_{\omega\in\Omega}=0$, then $(A\oplus M,\mu_{\alpha,\,\beta}^{\psi},T_{\omega}^{\chi},p_{\omega},q_{\omega})_{\alpha,\,\beta,\,\omega\in \Omega}$ becomes the semi-direct product of $(A,\mu_{\alpha,\,\beta},R_{\omega},p_{\omega}^A,q_{\omega}^A)_{\alpha,\,\beta,\,\omega\in \Omega}$ by $(M,T_{\omega}^M,p_{\omega}^M,q_{\omega}^M)_{\omega\in \Omega}$. Moreover, we get an abelian extension
\[0\longrightarrow(M,T_{\omega},p_{\omega}^M,q_{\omega}^M)_{\omega\in \Omega}\overset{i_{\alpha}}{\longrightarrow}(A\oplus M,\mu_{\alpha,\,\beta}^{\psi},T_{\omega}^{\chi},p_{\omega},q_{\omega})_{\alpha,\,\beta,\,\omega\in \Omega}\overset{\rho_{\alpha}}{\longrightarrow}(A,\mu_{\alpha,\,\beta},R_{\omega},p_{\omega}^A,q_{\omega}^A)_{\alpha,\,\beta,\,\omega\in \Omega}\longrightarrow 0,\]
which is isomorphic to the original one in Definition~\ref{abel-ext}.

Let $(M,T_{\omega}^M,p_{\omega}^M,q_{\omega}^M)_{\omega\in \Omega}$ be a Rota-Baxter family BiHom-$\Omega$-bimodule over the Rota-Baxter family BiHom-$\Omega$-associative algebra $(A,\mu_{\alpha,\,\beta},R_{\omega},p_{\omega}^A,q_{\omega}^A)_{\alpha,\,\beta,\,\omega\in \Omega}$. Recall the structure on $A\oplus M$ that was already defined in Eqs.~(\ref{def-mu-psi})-(\ref{def-q}). We have the following result.

\begin{lemma}\label{lemma}
	The quintuple $(A\oplus M, \mu_{\alpha,\,\beta}^{\psi},T_{\omega}^{\chi},p_{\omega},q_{\omega})_{\alpha,\,\beta,\,\omega\in \Omega}$ is a Rota-Baxter family BiHom-$\Omega$-associative algebra if and only if $(\psi_{\alpha,\,\beta}, \chi_{\omega})_{\alpha,\,\beta,\,\omega\in \Omega}$ is a 2-cocycle in the cochain complex $\mathrm{C}_{RBFA_{\lambda}}^{\bullet}(A,M)$.
\end{lemma}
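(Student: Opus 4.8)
The plan is to unwind Definition~\ref{def-RBfBHOalg} for the quintuple $(A\oplus M,\mu_{\alpha,\,\beta}^{\psi},T_{\omega}^{\chi},p_{\omega},q_{\omega})$ and to sort the resulting identities into the four conditions that, by Remark~\ref{RBFA-2-cocycle}, characterize a $2$-cocycle in $\mathrm{C}_{\mathrm{RBFA}_{\lambda}}^{2}(A,M)$. Being a Rota-Baxter family BiHom-$\Omega$-associative algebra amounts to three requirements: (i) $(A\oplus M,\mu_{\alpha,\,\beta}^{\psi},p_{\omega},q_{\omega})$ is a BiHom-$\Omega$-associative algebra; (ii) $T_{\omega}^{\chi}$ commutes with $p_{\omega}$ and $q_{\omega}$; and (iii) $T_{\omega}^{\chi}$ satisfies the Rota-Baxter identity Eq.~(\ref{RBfbihom}). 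I would treat these in turn, in each case projecting onto the $A$- and $M$-summands; the $A$-component always holds automatically because $(A,\mu_{\alpha,\,\beta},R_{\omega},p_{\omega}^{A},q_{\omega}^{A})$ is already a Rota-Baxter family BiHom-$\Omega$-associative algebra, and, since $\mu^{M}=0$, no term ever lands in $M$ through a product of two elements of $M$.

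For (i), I would expand the multiplicativity and BiHom-$\Omega$-associativity of $\mu_{\alpha,\,\beta}^{\psi}$ on $(a,m),(b,n),(c,l)$ and read off the $M$-component. Grouping the terms that carry a module variable $m,n,l$ recovers exactly the bimodule axioms Eqs.~(\ref{lmod-1})--(\ref{bimod}), which hold by hypothesis. Setting $m=n=l=0$ isolates the pure $\psi$-contributions: multiplicativity collapses to the equivariance Eq.~(\ref{Hoc1}), i.e. $\psi\in\mathrm{C}_{\Omega}^{2}(A,M)$, while associativity collapses precisely to the Hochschild cocycle identity Eq.~(\ref{cocycle}), i.e. $\delta_{\mathrm{Alg}}^{2}(\psi)=0$. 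Likewise (ii) splits into the assumed commutations of $R_{\omega}$ and $T_{\omega}^{M}$ with $p,q$, together with the single new relation $p_{\omega}^{M}\circ\chi_{\omega}=\chi_{\omega}\circ p_{\omega}^{A}$ and $q_{\omega}^{M}\circ\chi_{\omega}=\chi_{\omega}\circ q_{\omega}^{A}$, that is $\chi\in\mathrm{C}_{\Omega}^{1}(A,M)$.

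The substance lies in (iii). Writing $T_{\alpha}^{\chi}(a,m)=(R_{\alpha}(a),\chi_{\alpha}(a)+T_{\alpha}^{M}(m))$ and expanding both sides of Eq.~(\ref{RBfbihom}) for $T^{\chi}$ on $(a,m),(b,n)$, the $A$-component reproduces Eq.~(\ref{RBfbihom}) for $R$ on $A$, and the inner $A$-sum is recognized as $a\star_{\alpha,\,\beta}b$. On the $M$-component I would again collect by the free module variable: the coefficient of $n$ is exactly the left relation Eq.~(\ref{RBbimodulefamily-1}), the coefficient of $m$ is the right relation Eq.~(\ref{RBbimodulefamily-2}), and both hold because $M$ is a Rota-Baxter family BiHom-$\Omega$-bimodule. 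The terms depending on $a,b$ alone condense into the single identity $\psi_{\alpha,\,\beta}(R_{\alpha}(a),R_{\beta}(b))-T_{\alpha\beta}^{M}\psi_{\alpha,\,\beta}(R_{\alpha}(a),b)-T_{\alpha\beta}^{M}\psi_{\alpha,\,\beta}(a,R_{\beta}(b))-\lambda T_{\alpha\beta}^{M}\psi_{\alpha,\,\beta}(a,b)=-\partial^{1}(\chi)_{\alpha,\,\beta}(a,b)$; by Eqs.~(\ref{defof-partial}) and~(\ref{defof-Phi}) the left-hand side is $\Phi^{2}(\psi)_{\alpha,\,\beta}(a,b)$, so this is precisely $\Phi^{2}(\psi)+\partial^{1}(\chi)=0$.

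Collecting the four extracted conditions $\psi\in\mathrm{C}_{\Omega}^{2}(A,M)$, $\chi\in\mathrm{C}_{\Omega}^{1}(A,M)$, $\delta_{\mathrm{Alg}}^{2}(\psi)=0$ and $\Phi^{2}(\psi)+\partial^{1}(\chi)=0$, and comparing with Remark~\ref{RBFA-2-cocycle}, yields both directions of the equivalence at once. The main obstacle is the bookkeeping in (iii): the expanded Rota-Baxter identity produces roughly a dozen $M$-valued terms, and the delicate point is to confirm that the $m$- and $n$-dependent terms separate cleanly into Eqs.~(\ref{RBbimodulefamily-1})--(\ref{RBbimodulefamily-2}), leaving the $a,b$-terms to assemble, without cross-contamination, into $\Phi^{2}(\psi)+\partial^{1}(\chi)$.
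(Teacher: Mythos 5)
Your proposal is correct and follows essentially the same route as the paper: both unwind Definition~\ref{def-RBfBHOalg} for $(A\oplus M,\mu^{\psi},T^{\chi},p,q)$ componentwise, observe that the $A$-component and the terms carrying a module variable are absorbed by the given Rota-Baxter family BiHom-$\Omega$-algebra and bimodule structures, and identify the residual $a,b$-terms with the conditions $\psi\in\mathrm{C}_{\Omega}^{2}(A,M)$, $\chi\in\mathrm{C}_{\Omega}^{1}(A,M)$, $\delta_{\mathrm{Alg}}^{2}(\psi)=0$ and $\partial^{1}(\chi)+\Phi^{2}(\psi)=0$. Your version is in fact more explicit than the paper's (which leaves the converse as ``one can check''), and your bookkeeping of the $m$-, $n$- and $a,b$-terms in the Rota-Baxter identity is accurate.
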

\begin{proof}
	In this case, we have the abelian extension
		\[0\longrightarrow(M,T_{\omega},p_{\omega}^M,q_{\omega}^M)_{\omega\in \Omega}\overset{(0,\,\text{id})}{\longrightarrow}(A\oplus M, \mu_{\alpha,\,\beta}^{\psi},T_{\omega}^{\chi},p_{\omega},q_{\omega})_{\alpha,\,\beta,\,\omega\in \Omega}\overset{\atopwithdelims{\text{id}}{0}}{\longrightarrow}(A,\mu_{\alpha,\,\beta},R_{\omega},p_{\omega}^A,q_{\omega}^A)_{\alpha,\,\beta,\,\omega\in \Omega}\longrightarrow 0,\]
		where section $( s_{\alpha})_{\alpha\in \Omega}=(id,\, 0): (A,\mu_{\alpha,\,\beta},R_{\omega},p_{\omega}^A,q_{\omega}^A)_{\alpha,\,\beta,\,\omega\in \Omega}\rightarrow (A\oplus M, \mu_{\alpha,\,\beta}^{\psi},T_{\omega}^{\chi},p_{\omega},q_{\omega})_{\alpha,\,\beta,\,\omega\in \Omega}$ and the bimodule structure on $M$ is the prescribed one.
		For any $ \alpha,\,\beta,\,\gamma\in\Omega$, by Definition~\ref{def-RBfBHOalg}, we first have
		\[p_{\alpha}\circ T_{\alpha}^{\chi}=T_{\alpha}^{\chi}\circ p_{\alpha},\quad q_{\alpha}\circ T_{\alpha}^{\chi}=T_{\alpha}^{\chi}\circ q_{\alpha},\]
		\[p_{\alpha\,\beta}\circ \mu_{\alpha,\,\beta}^{\chi}=\mu_{\alpha,\,\beta}^{\psi}\big(p_{\alpha}\otimes p_{\beta}\big),\quad q_{\alpha\,\beta}\circ \mu_{\alpha,\,\beta}^{\psi}=\mu_{\alpha,\,\beta}^{\psi}\big(q_{\alpha}\otimes q_{\beta}\big),\]
		which imply
		\[(\chi_{\alpha})_{\alpha\in\Omega}\in \mathrm{C}_{\Omega}^1(A,M),\; (\psi_{\alpha,\,\beta})_{\alpha,\,\beta\in\Omega}\in \mathrm{C}_{\Omega}^2(A,M).\]
		Then, from the equation $\mu_{\alpha,\,\beta\,\gamma}^{\psi}\big(p_{\alpha}\otimes \mu_{\beta,\,\gamma}^{\psi}\big)=\mu_{\alpha\,\beta,\,\gamma}^{\psi}\big(\mu_{\alpha,\,\beta}^{\psi}\otimes q_{\gamma}\big)$, we get $\delta_{\mathrm{Alg}}^2(\psi)_{\alpha,\,\beta,\,\gamma}=0$. By \[\mu_{\alpha,\,\beta}^{\psi}\big(T_{\alpha}^{\chi}\otimes T_{\beta}^{\chi}\big)=T_{\alpha\,\beta}^{\chi}\big(\mu_{\alpha,\,\beta}^{\psi}(T_{\alpha}^{\chi}\otimes id)+\mu_{\alpha,\,\beta}^{\psi}\big(id\otimes T_{\beta}^{\psi})+\lambda \mu_{\alpha,\,\beta}^{\chi}\big),\]
		we get $\partial^1(\chi)_{\alpha,\,\beta}+\Phi^2(\psi)_{\alpha,\,\beta}=0$. Thus, we obtain that $(\psi_{\alpha,\,\beta}, \chi_{\omega})_{\alpha,\,\beta,\,\omega\in \Omega}$ is a 2-cocycle.
		
		Conversely, if $(\psi_{\alpha,\,\beta}, \chi_{\omega})_{\alpha,\,\beta,\,\omega\in \Omega}$ is a 2-cocycle , one can check that $(A\oplus M, \mu_{\alpha,\,\beta}^{\psi},T_{\omega}^{\chi},p_{\omega},q_{\omega})_{\alpha,\,\beta,\,\omega\in \Omega}$ is a Rota-Baxter family BiHom-$\Omega$-associative algebra. This completes the proof.
\end{proof}

Suppose that $M$ is a given bimodule over Rota-Baxter family BiHom-$\Omega$-associative algebra $A$. We denote by $\mathrm{Ext}(A,M)$ the isomorphic classes of abelian extensions of $A$ by $M$ for which the induced bimodule structure on $M$ is the prescribed one.

Now, we show that there is a one-to-one correspondence between the isomorphic classes of abelian extensions $\mathrm{Ext}(A,M)$ and the second cohomology group $\mathrm{H}^2_{\mathrm{RBFA}_{\lambda}}(A,M)$.

\begin{theorem}
	Let $(A,\mu_{\alpha,\,\beta},R_{\omega},p_{\omega}^A,q_{\omega}^A)_{\alpha,\,\beta,\,\omega\in \Omega}$ be a Rota-Baxter family BiHom-$\Omega$-associative algebra and $(M,T_{\omega},p_{\omega}^M,q_{\omega}^M)_{\omega\in \Omega}$ be a trivial Rota-Baxter family BiHom-$\Omega$-associative algebra. Then
	\begin{enumerate}
		\item\label{it:ext-coho} two isomorphic abelian extensions of $A$ by $M$ give rise to the same cohomology class in $H^2_{\mathrm{RBFA}_{\lambda}}(A,M)$.
		\item\label{it:coho-ext} two cohomologous 2-cocycles give rise to isomorphic abelian extensions.
	\end{enumerate}
\end{theorem}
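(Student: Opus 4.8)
The plan is to establish the two items separately; combined with Lemma~\ref{lemma} (every $2$-cocycle produces an abelian extension), Proposition~\ref{prop-psichi-2-cocycle} (every extension with a chosen section produces a $2$-cocycle), and the fact, already established, that the class of $(\psi_{\alpha,\beta},\chi_\omega)$ is independent of the section, the two items yield the bijection $\mathrm{Ext}(A,M)\cong\mathrm{H}^2_{\mathrm{RBFA}_\lambda}(A,M)$: item~\ref{it:ext-coho} shows the map $\mathrm{Ext}(A,M)\to\mathrm{H}^2_{\mathrm{RBFA}_\lambda}(A,M)$ is well defined, item~\ref{it:coho-ext} shows it is injective, and Lemma~\ref{lemma} shows it is surjective.

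For item~\ref{it:ext-coho}, I would start from an isomorphism $\phi=(\phi_\alpha)_{\alpha\in\Omega}:E\to E'$ of abelian extensions as in Definition~\ref{def-isoext}, so that $\rho'_\alpha\circ\phi_\alpha=\rho_\alpha$, $\phi_\alpha\circ i_\alpha=i'_\alpha$, and $\phi$ is a Rota-Baxter family BiHom-$\Omega$-associative algebra homomorphism restricting to the identity on $M$. Choosing a section $(s_\alpha)_{\alpha\in\Omega}$ of $E$, I would check that $(s'_\alpha:=\phi_\alpha\circ s_\alpha)_{\alpha\in\Omega}$ is a section of $E'$ in the sense of \eqref{section}, using that $\phi$ intertwines $p,q$ and that $\rho'\phi=\rho$. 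Feeding $s'=\phi s$ into the defining formulas \eqref{def-eq-psi}--\eqref{def-eq-chi} and pulling $\phi_{\alpha\beta}$ (resp. $\phi_\omega$) out of $\mu^{E'}$ (resp. $T^{E'}$), which is legitimate because $\phi$ is a homomorphism intertwining the multiplications and the operators, gives $\psi'_{\alpha,\beta}=\phi_{\alpha\beta}\circ\psi_{\alpha,\beta}$ and $\chi'_\omega=\phi_\omega\circ\chi_\omega$. Since $\psi_{\alpha,\beta}$ and $\chi_\omega$ take values in $M$ and $\phi|_M=\mathrm{id}_M$, the two $2$-cocycles coincide; as the cohomology class does not depend on the section, $E$ and $E'$ determine the same class in $\mathrm{H}^2_{\mathrm{RBFA}_\lambda}(A,M)$.

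For item~\ref{it:coho-ext}, suppose $(\psi'_{\alpha,\beta},\chi'_\omega)$ and $(\psi_{\alpha,\beta},\chi_\omega)$ are cohomologous, with difference $d^1(\eta,x)$ for some $(\eta,x)\in\mathrm{C}^1_\Omega(A,M)\oplus\mathrm{C}^0_{\mathrm{RBF}_\lambda}(A,M)$. By Definition~\ref{diffof-RBFbihomOassoalg} this reads $\psi'_{\alpha,\beta}-\psi_{\alpha,\beta}=(\delta^1_{\mathrm{Alg}}\eta)_{\alpha,\beta}$ and $\chi'_\omega-\chi_\omega=-\partial^0(x)_\omega-\Phi^1(\eta)_\omega$. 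The first step I would take is to absorb the degree-$0$ part: setting $\tilde\eta:=\eta+\delta^0_{\mathrm{Alg}}(x)\in\mathrm{C}^1_\Omega(A,M)$ and invoking $\delta^1_{\mathrm{Alg}}\circ\delta^0_{\mathrm{Alg}}=0$ together with the chain-map identity $\partial^0=\Phi^1\circ\delta^0_{\mathrm{Alg}}$, the coboundary rewrites as $\psi'-\psi=\delta^1_{\mathrm{Alg}}\tilde\eta$ and $\chi'-\chi=-\Phi^1(\tilde\eta)$. Then, using Lemma~\ref{lemma}, I would form the two extensions on $A\oplus M$ with data \eqref{def-mu-psi}--\eqref{def-q} attached to $(\psi,\chi)$ and $(\psi',\chi')$, and define $\phi_\alpha:A\oplus M\to A\oplus M$ by $\phi_\alpha(a,m):=(a,\,m-\tilde\eta_\alpha(a))$. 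I would then verify that $\phi$ is an isomorphism of Rota-Baxter family BiHom-$\Omega$-associative algebras (its inverse being $(a,m)\mapsto(a,m+\tilde\eta_\alpha(a))$) fitting into the diagram of Definition~\ref{def-isoext}: compatibility with $p_\omega,q_\omega$ follows from $\tilde\eta$ intertwining the structure maps; the multiplicativity $\phi_{\alpha\beta}\circ\mu^\psi_{\alpha,\beta}=\mu^{\psi'}_{\alpha,\beta}\circ(\phi_\alpha\otimes\phi_\beta)$ collapses, once the module terms cancel, to $\psi'-\psi=\delta^1_{\mathrm{Alg}}\tilde\eta$ via \eqref{BiHomOmHochschdiff}; and the operator identity $\phi_\omega\circ T^\chi_\omega=T^{\chi'}_\omega\circ\phi_\omega$ collapses to $\chi'-\chi=-\Phi^1(\tilde\eta)$ via \eqref{defof-Phi-1}.

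The main obstacle will be the last verification in item~\ref{it:coho-ext}: showing that the single shift $\phi_\alpha(a,m)=(a,m-\tilde\eta_\alpha(a))$ \emph{simultaneously} intertwines the deformed products and the deformed Rota-Baxter operators. The reduction step—absorbing $x$ into $\tilde\eta$ through $\partial^0=\Phi^1\circ\delta^0_{\mathrm{Alg}}$—is essential, since a plain shift cannot by itself reproduce a $\partial^0(x)$-term in $\chi$; and on the operator side one must track the weight-$\lambda$ term and the action of $T^M$ inside \eqref{def-T-chi} carefully so that the residual expression is exactly $-\Phi^1(\tilde\eta)_\omega$ and nothing more. Once this is in place, the remaining checks (bijectivity of $\phi$ and commutativity of the two-row diagram) are routine.
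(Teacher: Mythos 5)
Your proposal is correct and follows essentially the same route as the paper: for item (a) you compose the given section with the isomorphism, pull the homomorphism through $\mu^{E}$ and $T^{E}$, and use $\phi|_{M}=\mathrm{id}_{M}$ to get literally equal cocycles; for item (b) you absorb the degree-$0$ part into a single $1$-cochain $\tilde\eta=\eta+\delta^{0}_{\mathrm{Alg}}(x)$ (the paper writes the shift $\eta^{1}_{\alpha}+\delta^{0}_{\mathrm{Alg}}(\eta^{0})_{\alpha}$ directly inside $\phi_{\alpha}$, which is the same device) and check that the shear $\phi_{\alpha}(a,m)=(a,m\mp\tilde\eta_{\alpha}(a))$ intertwines $\mu^{\psi}$ with $\mu^{\psi'}$ and $T^{\chi}$ with $T^{\chi'}$; your sign is consistent with your direction of the coboundary relation. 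The only cosmetic remark is that the operator identity $\phi_{\omega}\circ T^{\chi}_{\omega}=T^{\chi'}_{\omega}\circ\phi_{\omega}$ involves no weight-$\lambda$ term, so that particular worry is moot.
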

\begin{proof}
	\ref{it:ext-coho}. Let $E=(A\oplus M,\mu_{\alpha,\,\beta}^E,T_{\omega}^E,p_{\omega},q_{\omega})_{\alpha,\,\beta,\,\omega\in \Omega}$ and $\bar{E}=(A\oplus M,\bar{\mu}_{\alpha,\,\beta}^E,\bar{T}_{\omega}^E,p_{\omega},q_{\omega})_{\alpha,\,\beta,\,\omega\in \Omega}$ be two isomorphic abelian extensions of $A$ by $M$ and let $( s_{\alpha}^1)_{\alpha\in \Omega}$ be a section of $E$. For any $\alpha,\,\beta,\,\omega\in \Omega$, by Definition~\ref{def-isoext}, we have
		\[\rho_{\alpha}^2\circ(\phi_{\alpha}\circ s_{\alpha}^1)=(\rho_{\alpha}^2\circ \phi_{\alpha})\circ s_{\alpha}^1=\rho_{\alpha}^1\circ s_{\alpha}^1=id_{A}.\]
		That is, $\phi_{\alpha}\circ s_{\alpha}^1$ is a section of $\rho_{\alpha}^2$, so we denote $s_{\alpha}^2\overset{\bigtriangleup}{=}\phi_{\alpha}\circ s_{\alpha}^1$. For the bimodule structure on $M$, we have
		\begin{align*}
			\phi_{\alpha\,\beta}(a\rhd_{\alpha,\,\beta}m)=&\phi_{\alpha\,\beta}\mu_{\alpha,\,\beta}^E\big(s_{\alpha}^1(a),\, i_{\beta}^1(m)\big)\\
			=&\bar{\mu}_{\alpha,\,\beta}^E\big(\phi_{\alpha}s_{\alpha}^1(a),\, \phi_{\beta}i_{\beta}^1(m)\big)\hspace{1cm}(\text{by $\phi_{\alpha}$ satisfying Eq.~(\ref{RBfBHOalghomo})})\\
			=&\bar{\mu}_{\alpha,\,\beta}^E\big(\phi_{\alpha}s_{\alpha}^1(a),\, i_{\beta}^2(m)\big)\hspace{1cm}(\text{by }\phi_{\beta}\circ i_{\beta}^1=i_{\beta}^2)\\
			=&a\rhd_{\alpha,\,\beta}m.
		\end{align*}
	So, we get $\phi_{\alpha}|_{M}=id_{M}$. By Eqs.~(\ref{def-eq-psi})-(\ref{def-eq-chi}) and Proposition~\ref{prop-psichi-2-cocycle}, let $(\psi_{\alpha,\,\beta}^1,\chi_{\omega}^1)_{\alpha,\,\beta,\,\omega\in \Omega}$ and $(\psi_{\alpha,\,\beta}^2,\chi_{\omega}^2)_{\alpha,\,\beta,\,\omega\in \Omega}$ be two 2-cocycles corresponding to abelian extension $E$ and $\bar{E}$, respectively, then we have
	\begin{align*}
		\psi_{\alpha,\,\beta}^2(a,b)=&\bar{\mu}_{\alpha,\,\beta}^E\big(s_{\alpha}^2(a),\, s_{\beta}^2(b)\big)-s_{\alpha\,\beta}^2\mu_{\alpha,\,\beta}(a,b)\\
		=&\bar{\mu}_{\alpha,\,\beta}^E\big(\phi_{\alpha} s_{\alpha}^1(a),\, \phi_{\beta}s_{\beta}^1(b)\big)-\phi_{\alpha\,\beta}s_{\alpha\,\beta}^1\mu_{\alpha,\,\beta}(a,b)\\
		=&\phi_{\alpha\,\beta}\Big(\mu_{\alpha,\,\beta}^E\big(s_{\alpha}^1(a),\, s_{\beta}^1(b)\big)-s_{\alpha\,\beta}^1\mu_{\alpha,\,\beta}(a,b)\Big)\\
		&\hspace{1cm}(\text{by Eq.~(\ref{RBfBHOalghomo}) and }\phi_{\alpha\,\beta}\mu_{\alpha,\,\beta}^E=\bar{\mu}_{\alpha,\,\beta}^E(\phi_{\alpha}\otimes \phi_{\beta}))\\
		=&\phi_{\alpha\,\beta}\psi_{\alpha,\,\beta}^1(a,b)\\
		=&\psi_{\alpha,\,\beta}^1(a,b).\hspace{1cm}(\text{by }\phi_{\alpha}|_{M}=id_{M})
	\end{align*}
Similarly, we get $\chi_{\omega}^2(a)=\chi_{\omega}^1(a).$ So, $(\psi_{\alpha,\,\beta}^1,\chi_{\omega}^1)_{\alpha,\,\beta,\,\omega\in \Omega}$ and $(\psi_{\alpha,\,\beta}^2,\chi_{\omega}^2)_{\alpha,\,\beta,\,\omega\in \Omega}$ correspond to the same element in $\mathrm{H}_{\mathrm{RBFA}_{\lambda}}^2(A,M)$. 

\ref{it:coho-ext}. Let $(\psi_{\alpha,\,\beta}^1,\chi_{\omega}^1)_{\alpha,\,\beta,\,\omega\in \Omega}$ and $(\psi_{\alpha,\,\beta}^2,\chi_{\omega}^2)_{\alpha,\,\beta,\,\omega\in \Omega}$ be two 2-cocycles. By Lemma~\ref{lemma} and Eqs.~(\ref{def-mu-psi})-(\ref{def-q}), we know that $(A\oplus M,\mu_{\alpha,\,\beta}^{\psi^1},T_{\omega}^{\chi^1},p_{\omega},q_{\omega})_{\alpha,\,\beta,\,\omega\in \Omega}$ and $(A\oplus M, \mu_{\alpha,\,\beta}^{\psi^2},T_{\omega}^{\chi^2},p_{\omega},q_{\omega})_{\alpha,\,\beta,\omega\in \Omega}$ are their corresponding abelian extensions, respectively. If $(\psi_{\alpha,\,\beta}^1,\chi_{\omega}^1)_{\alpha,\,\beta,\,\omega\in \Omega}$ and $(\psi_{\alpha,\,\beta}^2,\chi_{\omega}^2)_{\alpha,\,\beta,\,\omega\in \Omega}$ have the same cohomology class in $\mathrm{H}_{\mathrm{RBFA}_{\lambda}}^2(A,M)$, then there exist two families of linear maps $(\eta_{\alpha}^0)_{\alpha\in\Omega}: \mathbf{k}\rightarrow M$ and $(\eta_{\alpha}^1)_{\alpha\in\Omega}: A\rightarrow M$ satisfy
\[(\psi_{\alpha,\,\beta}^1,\chi_{\omega}^1)=(\psi_{\alpha,\,\beta}^2,\chi_{\omega}^2)+\big(\delta_{\mathrm{Alg}}^1(\eta^1)_{\alpha,\,\beta},\, -\partial^0(\eta^0)_{\omega}-\Phi^1(\eta^1)_{\omega}\big),\quad \text{for all }\alpha,\,\beta,\,\omega\in\Omega.\]
Then, we define a family of linear maps $( \phi_{\alpha})_{\alpha\in \Omega}: A\oplus M\rightarrow A\oplus M$ by
\[\phi_{\alpha}(a,m):=\Big(a,\,\big( \eta_{\alpha}^1+\delta_{\mathrm{Alg}}^0(\eta^0)_{\alpha}\big)(a)+m\Big),\quad \text{for all }(a,m)\in A\oplus M,\,\alpha\in\Omega.\]
We can easily verify that $(\phi_{\alpha})_{\alpha\in \Omega}$ is a Rota-Baxter family BiHom-$\Omega$-associative algebra isomorphism from $(A\oplus M,\mu_{\alpha,\,\beta}^{\psi^1},T_{\omega}^{\chi^1},p_{\omega},q_{\omega})_{\alpha,\,\beta,\,\omega\in \Omega}$ to $(A\oplus M, \mu_{\alpha,\,\beta}^{\psi^2},T_{\omega}^{\chi^2},p_{\omega},q_{\omega})_{\alpha,\,\beta,\omega\in \Omega}$ and satisfies 
\[\phi_{\alpha}\circ i_{\alpha}^1=i_{\alpha}^2,\quad \rho_{\alpha}^1=\rho_{\alpha}^2\circ \phi_{\alpha},\quad \text{for all }\alpha\in\Omega.\]
Thus, $(A\oplus M,\mu_{\alpha,\,\beta}^{\psi^1},T_{\omega}^{\chi^1},p_{\omega},q_{\omega})_{\alpha,\,\beta,\,\omega\in \Omega}$ and $(A\oplus M, \mu_{\alpha,\,\beta}^{\psi^2},T_{\omega}^{\chi^2},p_{\omega},q_{\omega})_{\alpha,\,\beta,\omega\in \Omega}$ are isomorphic. This completes the proof.
\end{proof}

\smallskip
\noindent
{\bf Acknowledgments.} This work is supported by Natural Science Foundation of China (12101183). Y. Y. Zhang is also supported by the Postdoctoral Fellowship Program of CPSF under Grant Number (GZC20240406).

\smallskip
\noindent
{\bf Statements and Declarations:}
All datasets underlying the conclusions of the paper are available to readers. No conflict of
interest exists in the submission of this manuscript.


\bigskip

\end{document}